
\documentclass{daj}

\usepackage{ amssymb, amsmath, enumerate, amsfonts, amsthm, mathrsfs, url, bm}

\numberwithin{equation}{section}

\usepackage{mystyle}


\dajAUTHORdetails{%
  title = {Quantitative Bounds in the Polynomial Szemer\'edi Theorem:\ the Homogeneous Case}, 
  author = {Sean Prendiville},
  plaintextauthor = {Sean Prendiville},
    %
    %
  plaintexttitle = {Quantitative Bounds in the Polynomial Szemeredi Theorem: The Homogeneous Case}, 
    %
  runningtitle = {Quantitative Bounds in the Polynomial Szemer\'edi Theorem}, 
    %
    %
   %
  keywords = {Bergelson--Leibman theorem, polynomial Szemer\'edi, Gowers norms, density bounds},
}   

\dajEDITORdetails{%
   year={2017},
   number={5},
   received={19 January 2017},   
   published={21 February 2017},  
   doi={10.19086/da.1282},       
}   

\begin{document}

\begin{frontmatter}[classification=text]


\author[sp]{Sean Prendiville\thanks{Whilst carrying out this work the author was supported by a Richard Rado Postdoctoral Fellowship at the University of Reading.}}

\begin{abstract}
We obtain quantitative bounds in the polynomial Szemer\'{e}di theorem of Bergelson and Leibman, provided the polynomials are homogeneous and of the same degree.  Such configurations include arithmetic progressions with common difference equal to a perfect $k$th power.
\end{abstract}
\end{frontmatter}
\setcounter{tocdepth}{1}
\tableofcontents

\newpage
\section{Introduction}

Szemer\'edi's theorem \cite{szemeredi75} has been called a `Rosetta stone' \cite{taoICM} connecting various areas of mathematics, a consequence of the diversity of proofs it has received.  A generalisation of this theorem proved by Bergelson and Leibman \cite{bergelsonleibman96} states that for polynomials $P_1, \dots, P_n \in \Z[x]$ with zero constant term, a set $A\subset [N]$ lacking the configurations 
\begin{equation}\label{BL config}
x, \ x+P_1(y), \ \dots, \ x + P_n(y) \quad \text{with} \quad y \in \Z \setminus \set{0} 
\end{equation}
satisfies the size bound $|A| = o_{\vP}(N)$.

In contrast with Szemer\'edi's theorem, all proofs of this polynomial generalisation proceed via ergodic methods.  
On a number of occasions, Gowers \cite{gowersicm, gowerscdm, gowers01} has 
asked for an alternative proof of the polynomial Szemer\'edi theorem, in particular a proof yielding quantitative bounds.  The purpose of this article is to provide such a proof when the polynomials are homogeneous and of the same degree.

\begin{theorem}\label{main:thm}
Let $c_1, \dots, c_n \in \Z$.  If $A \subset [N] := \set{1, 2, \dots, N}$ lacks configurations of the form 
\begin{equation}\label{main config}
x,\quad x + c_1y^k,\quad \dots,\quad x+c_ny^k \quad \text{with} \quad y \in \Z \setminus \set{0}
\end{equation}
then $A$ satisfies the size bound
\begin{equation}\label{loglog bound}
\begin{split}
|A| \ll_{\vc,k} N(\log\log N)^{-c(n,k)}.
\end{split}
\end{equation}
Here $c(n,k)$ is a positive absolute constant dependent only on the length and degree of the configuration \eqref{main config}.  
\end{theorem}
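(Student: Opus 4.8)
The plan is to run a density-increment argument of the kind used by Gowers in his quantitative proof of Szemer\'edi's theorem, the polynomial nature of \eqref{main config} being handled by Bergelson's PET (polynomial exhaustion) induction. We may assume the $c_i$ distinct and nonzero, discarding repeated or vanishing terms. Write $\delta=|A|/N$ and fix $H\asymp\bigl(N/\max_i|c_i|\bigr)^{1/k}$, so that $|c_iy^k|\leq N$ whenever $0<|y|\leq H$, and introduce the normalised counting operator
\[
\Lambda(f_0,\dots,f_n)\;:=\;\mathbb{E}_{x\in[N]}\;\mathbb{E}_{0<|y|\leq H}\;f_0(x)\,f_1\!\bigl(x+c_1y^k\bigr)\cdots f_n\!\bigl(x+c_ny^k\bigr),
\]
with $f_i(t)=0$ for $t\notin[N]$. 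Since $y=0$ is excluded, $\Lambda(1_A,\dots,1_A)>0$ already produces a configuration \eqref{main config} inside $A$, so it suffices to prove $\Lambda(1_A,\dots,1_A)>0$ whenever $\delta\gg(\log\log N)^{-c(n,k)}$. One works throughout not just in $[N]$ but inside an arbitrary progression $a+d^k\cdot[N']$ with $k$-th power common difference: the substitutions $y\mapsto dy'$ and $x\mapsto a+d^km$ exhibit \eqref{main config} as self-similar under such restriction, so the whole argument proceeds uniformly over these progressions.

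\textbf{A generalised von Neumann theorem via PET.} The first step is to bound $\Lambda$ by a single Gowers norm: for some $s=s(n,k)$ and $\kappa=\kappa(n,k)>0$, on any progression with $k$-th power common difference and length $M$ one has
\[
\bigl|\Lambda(f_0,\dots,f_n)\bigr|\;\ll\;\min_{0\leq i\leq n}\lVert f_i\rVert_{U^s}^{\kappa}\;+\;M^{-c}
\]
for $1$-bounded $f_i$. This is proved by PET induction: one repeatedly applies the van der Corput inequality, i.e.\ Cauchy--Schwarz in the $y$-variable, each application removing one function and replacing $y^k$ in the others by a difference $(y+h)^k-y^k$ of strictly smaller degree; assigning the polynomial system a weight that strictly decreases under each step, the process terminates at a genuinely linear configuration, where the Gowers--Cauchy--Schwarz inequality finishes the bound. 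The error $M^{-c}$ absorbs the short or degenerate Weyl sums thrown up along the way, once the major-arc contributions of $y^{k-j}$ modulo small $q$ have been dealt with by a further passage to progressions in the $y$-variable.

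\textbf{Major arcs and the inverse theorem.} Set $f_A:=1_A-\delta 1_{[N]}$ and expand $1_A=\delta+f_A$ in each slot of $\Lambda(1_A,\dots,1_A)$. The term in which every $f_i\equiv\delta$ equals $\delta^{n+1}$ times the proportion of admissible pairs $(x,y)$, and this is $\gg\delta^{n+1}$, a positive proportion of all pairs with $x\in[N]$, $0<|y|\leq H$ being admissible. Each of the remaining $2^{n+1}-1$ terms carries a factor $f_A$, hence by the localised estimate of the previous step is $\ll\lVert f_A\rVert_{U^s}^{\kappa}+N^{-c}$. So either $\lVert f_A\rVert_{U^s}$ is small, whence $\Lambda(1_A,\dots,1_A)\gg\delta^{n+1}>0$ once $N$ exceeds a threshold depending only on $n$, $k$ and $\delta$ — which the target bound guarantees — and $A$ contains the configuration; or $\lVert f_A\rVert_{U^s}$ is large, and a quantitative inverse theorem for the Gowers $U^s$-norm (Fourier analysis when $s=2$, the inverse theorem for the higher Gowers norms when $s\geq3$) supplies a polynomial, or nilsequence, phase of degree below $s$ with which $1_A$ correlates on a long sub-progression. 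A pigeonholing then extracts a progression $P$ with $k$-th power common difference, of length $N'\geq N^{\gamma}$ with $\gamma=\gamma(n,k,\delta)>0$, on which $A$ has density at least $\delta+c\,\delta^{C}$, with $c>0$ absolute and $C=C(n,k)$.

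\textbf{Iteration, and the main obstacle.} Iterating Steps~2--3 and using that the density only increases, after $m$ steps the density exceeds $1$ as soon as $m\gg\delta^{-(C-1)}$, while the ambient length is still at least $N^{\gamma^m}$, the per-step exponent staying bounded below throughout. Since a configuration-free $A$ can never reach density $1$, the iteration must instead break down when the length has fallen below the threshold needed to run the argument; comparing the two bounds on the number of permissible steps forces $\delta^{-(C-1)}\gg\log\log N$, that is, $\delta\ll(\log\log N)^{-1/(C-1)}$, which is \eqref{loglog bound}. The genuinely hard part is the PET induction and its interface with the inverse theorem — one must keep every linear form emerging from the induction non-degenerate and every Weyl-type error negligible, and then invoke a quantitative $U^s$-inverse theorem whose known bounds are weak; it is exactly this weakness, just as in Gowers' Szemer\'edi theorem, that drives the final estimate down to a double logarithm. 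A persistent secondary nuisance is the failure of $y^k$ to equidistribute modulo small moduli, which must be tracked through the major-arc analysis and in arranging that each new density-increment progression again admits a $k$-th power common difference.
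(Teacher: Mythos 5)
Your overall architecture (PET induction, generalised von Neumann, inverse theorem, density increment on $k$th power progressions, double-log from the iteration) matches the paper's in outline, but the central step as you state it has a genuine gap. You claim a generalised von Neumann inequality of the form $|\Lambda(f_0,\dots,f_n)|\ll\min_i\lVert f_i\rVert_{U^s}^{\kappa}+M^{-c}$ with a \emph{single global} Gowers norm and polynomial dependence, proved by PET induction terminating in "a genuinely linear configuration, where the Gowers--Cauchy--Schwarz inequality finishes the bound." This last step fails: after the van der Corput/PET iterations the surviving linear configuration is $x, x+a_1y,\dots,x+a_dy$ with $x$ ranging over $[N]$ but $y$ confined to an interval of length $\sim N^{1/k}$ (the range of the original polynomial parameter). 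Gowers--Cauchy--Schwarz for linear forms requires the shift and difference parameters to range over comparable scales; with the mismatched ranges it yields only control by an \emph{average of Gowers norms localised to intervals of length} $\sim N^{1/k}$, not by $\lVert f_A\rVert_{U^s[N]}$. Upgrading such local control to a global norm is exactly the concatenation problem of Tao--Ziegler, for which the only known quantitative dependence is tower-exponential and hence useless here. The paper's key idea is to \emph{not} attempt this upgrade: it proves only a local von Neumann theorem ($\lVert f_A\rVert_{U^d\sim M}\gg\delta^C\lVert 1_{[N]}\rVert_{U^d\sim M}$ for some $M\asymp_{\delta}N^{1/k}$), then applies the inverse theorem separately on each short interval $x+[M]$ where the localised norm is large, and recovers the density increment by exploiting that $f_A$ has mean zero when summed over all the resulting subprogressions.

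A second, related problem is your appeal to "the inverse theorem for the higher Gowers norms when $s\geq3$" producing a nilsequence correlation: no version of that theorem with bounds anywhere near polynomial in $\delta$ is available, and using it would destroy the quantitative conclusion. What the argument needs (and what the paper uses) is Gowers's \emph{local} inverse theorem, which partitions $[N]$ into progressions of length $N^{c\delta^C}$ on which $f$ has large $U^1$-norm, with polynomial dependence on $\delta$. Even then, one must modify Gowers's proof so that the output progressions have perfect $k$th power common difference — this requires replacing the Weyl/Bohr-set recurrence steps with simultaneous $k$th power recurrence (Cook's theorem), which is what degrades the progression length from $N^{c\delta^C}$ to $N^{\exp(-1/c\delta^C)}$ and is the reason the final exponent $c(n,k)$ is inexplicit. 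Your sketch asserts the $k$th power common difference can be obtained "by a pigeonholing," which understates a genuinely necessary modification.
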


The only previously known quantitative result for a non-linear configuration of size greater than two is due to Green \cite{green02}, who determined a bound of the shape \eqref{loglog bound} for the progression
$$
x,\quad x+ y^2 + z^2,\quad x + 2y^2 + 2z^2. 
$$
We obtain comparable density bounds for arbitrarily long configurations of this type, a seemingly more general consequence of Theorem \ref{main:thm}.
\begin{corollary}\label{homogeneous:thm}
Let $P_1, \dots, P_n \in \Z[y_1, \dots, y_m]$ be homogeneous polynomials, all of degree $k$, and let $K$ denote a finite union of proper subspaces of $\R^m$.  If $A \subset [N]$ lacks configurations of the form 
\begin{equation}\label{homogeneous config}
x, x + P_1(\vy), \dots, x+P_n(\vy) \quad \text{with} \quad \vy \in \Z^m \setminus K
\end{equation}
then $A$ satisfies the size bound
\begin{align*}
|A| \ll_{\vP, K} N(\log\log N)^{-c(n,k)}.
\end{align*}
\end{corollary}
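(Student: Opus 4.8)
The plan is to deduce Corollary~\ref{homogeneous:thm} from Theorem~\ref{main:thm} by restricting the parameter $\vy$ to a single line through the origin. Fix a vector $\mathbf{v} \in \Z^m$, to be chosen shortly, and set $c_i := P_i(\mathbf{v}) \in \Z$ for $1 \leq i \leq n$. Since each $P_i$ is homogeneous of degree $k$, we have $P_i(t\mathbf{v}) = t^k P_i(\mathbf{v}) = c_i t^k$ for every $t \in \Z$, so the tuple
\[
x, \quad x + P_1(t\mathbf{v}), \quad \dots, \quad x + P_n(t\mathbf{v})
\]
is exactly the configuration \eqref{main config} with coefficients $c_1, \dots, c_n$ and parameter $t$. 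Hence, provided $\mathbf{v}$ is chosen so that $t\mathbf{v} \notin K$ for all $t \in \Z \setminus \set{0}$, any $A \subset [N]$ lacking \eqref{homogeneous config} also lacks \eqref{main config} for this $\vc$: a configuration of the latter type in $A$ would, on taking $\vy = t\mathbf{v}$, yield one of the former. Theorem~\ref{main:thm} then gives $|A| \ll_{\vc, k} N(\log\log N)^{-c(n,k)}$, and since $\vc$ is determined by $\vP$ and $\mathbf{v}$, while $\mathbf{v}$ will depend only on $\vP$ and $K$, the implied constant depends only on $\vP$ and $K$. (The $c_i$ may vanish or coincide, but Theorem~\ref{main:thm} places no restriction on $\vc$, so this is harmless.)

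It remains to produce a suitable $\mathbf{v}$. Write $K = V_1 \cup \dots \cup V_r$ with each $V_j$ a proper subspace of $\R^m$, choose for each $j$ a nonzero linear form $\ell_j$ vanishing on $V_j$, and set $L := \ell_1 \cdots \ell_r$. This is a nonzero polynomial, so it does not vanish identically on $\Z^m$ (induct on the number of variables, using that a nonzero one-variable polynomial has only finitely many roots). Pick $\mathbf{v} \in \Z^m$ with $L(\mathbf{v}) \neq 0$; then $\mathbf{v} \notin V_j$ for each $j$, so $\mathbf{v} \notin K$. Finally, if $t \in \Z \setminus \set{0}$ and $t\mathbf{v} \in V_j$ for some $j$, then $\mathbf{v} = t^{-1}(t\mathbf{v}) \in V_j$ as $V_j$ is a subspace, a contradiction; hence $t\mathbf{v} \notin K$ for every nonzero $t$, and the reduction is complete.

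There is no genuine obstacle here: the substance of the corollary is entirely contained in Theorem~\ref{main:thm}, and the reduction is a routine specialisation argument exploiting the homogeneity of the $P_i$. The only step needing a small argument is the elementary fact that a finite union of proper subspaces cannot exhaust $\Z^m$, treated above; one must also simply check that the implied constant is allowed to depend on $\vP$ and $K$, which it is, since the chosen line $\set{t\mathbf{v} : t \in \Z}$ is built from these data alone.
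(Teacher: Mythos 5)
Your proposal is correct and follows essentially the same route as the paper: pick $\mathbf{v} \in \Z^m \setminus K$, observe $t\mathbf{v} \notin K$ for $t \neq 0$, set $c_i = P_i(\mathbf{v})$, and invoke homogeneity to reduce to Theorem \ref{main:thm}. The only difference is that you spell out why $\Z^m \setminus K$ is nonempty, a detail the paper leaves implicit.
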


A first step towards a quantitative Bergelson--Leibman theorem was taken by Walters \cite{walters}, who obtained a combinatorial proof of the polynomial van der Waerden theorem.  This was originally established in \cite{bergelsonleibman96} and asserts the existence of the following number.   
\begin{definition}[Polynomial van der Waerden number]
Given integer polynomials $P_1, \dots, P_n \in \Z[y]$ with zero constant term, define the \emph{van der Waerden} number $W(\vP, r)$ to be the least positive integer $N$ such that any $r$-colouring of $[N]$ results in a monochromatic configuration of the form \eqref{BL config}.
\end{definition} 

Walters's argument generalises the colour-focusing argument of van der Waerden \cite{vdw}.  As a consequence, when, for instance, $\vP$ corresponds to a $k$th power progression
\begin{equation}\label{kth AP}
x,\quad x+y^k,\quad x+2y^k,\quad \dots,\quad x+ (n-1)y^k
\end{equation} 
the argument yields (at best) an Ackerman-type bound on $W(\vP, 2)$ in terms of the length $n$.  
Unlike Shelah's \cite{shelah} primitive recursive bounds in van der Waerden's theorem, Gowers \cite[p.186]{gowerscdm} has observed:
\begin{quote}
It seems not to be possible to find a `Shelah-ization' of Walters's proof, so it is still an open problem whether the bounds can be made primitive recursive.\hfill 
\end{quote}
Theorem \ref{main:thm} yields the first `reasonable' bounds on $W(\vP, r)$ in terms of $r$.
\begin{corollary}
If $P_i = c_i y^k$ for $i =1, \dots, n$ then there exist constants $C_1 = C_1(\vP)$ and $C_2 = C_2(n, k)$ such that
\begin{equation*}
\begin{split}
W(\vP, r) \leq \exp \exp\brac{ C_1{r^{C_2}}}
\end{split}
\end{equation*}
\end{corollary}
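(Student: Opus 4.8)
The plan is to obtain this as a pigeonholing of Theorem~\ref{main:thm}; all of the substance is already contained there. Write $\vP = (c_1y^k, \dots, c_ny^k)$ and fix an admissible value $C_0 = C_0(\vc,k) \geq 1$ of the implied constant in \eqref{loglog bound}, so that $|A| \leq C_0 N(\log\log N)^{-c(n,k)}$ whenever $A \subseteq [N]$ contains no configuration of the form \eqref{main config}; for the finitely many small $N$ at which $\log\log N$ fails to be positive we may simply regard this bound as vacuous, which affects nothing below.

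First I would fix $r$ and suppose $N$ is such that $[N]$ carries an $r$-colouring $[N] = A_1 \cup \dots \cup A_r$ with no monochromatic copy of \eqref{BL config}. A monochromatic configuration is exactly one of the tuples $x,\, x + c_1y^k,\, \dots,\, x + c_ny^k$ (with $y \in \Z \setminus \set{0}$) all of whose entries lie in a single class $A_i$, so the hypothesis says that each $A_i$ lacks configurations of the shape \eqref{main config}. By the pigeonhole principle some class satisfies $|A_i| \geq N/r$, and Theorem~\ref{main:thm} applied to it yields
\begin{equation*}
\frac{N}{r} \;\leq\; |A_i| \;\leq\; C_0\, N(\log\log N)^{-c(n,k)},
\end{equation*}
hence $(\log\log N)^{c(n,k)} \leq C_0 r$ and therefore $N \leq \exp\exp\bigl((C_0 r)^{1/c(n,k)}\bigr)$.

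Taking the contrapositive: every $N > \exp\exp\bigl((C_0 r)^{1/c(n,k)}\bigr)$ is such that all of its $r$-colourings contain a monochromatic configuration \eqref{BL config}. In particular $W(r,\vP)$ is finite and at most $\exp\exp\bigl((C_0 r)^{1/c(n,k)}\bigr) + 1$. Finally I would absorb constants: since $(C_0 r)^{1/c(n,k)} = C_0^{1/c(n,k)} r^{1/c(n,k)}$ and $\exp\exp(t) + 1 \leq \exp\exp(2t)$ once $t \geq 1$ (which holds here as $C_0 \geq 1$), the bound takes the form $W(r,\vP) \leq \exp\exp(C_1 r^{C_2})$ with $C_2 = 1/c(n,k)$, depending only on $n$ and $k$ since $c(n,k)$ does, and $C_1 = 2C_0^{1/c(n,k)} = C_1(\vc,k) = C_1(\vP)$. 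There is no real obstacle here: the entire mathematical content lies in Theorem~\ref{main:thm}, and what remains is the bookkeeping just described.
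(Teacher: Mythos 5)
Your proposal is correct and is precisely the argument the paper intends: it explicitly says the corollary ``combines Theorem \ref{main:thm} with the observation that any $r$-colouring of $[N]$ gives a colour class of size at least $N/r$,'' and your pigeonhole-plus-rearrangement is exactly that, with the constant bookkeeping handled properly.
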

\noindent This follows from the fact that in any $r$-colouring of $[N]$ there is a colour class of size at least $N/r$.

Determining an upper bound for $W(\vP, 2)$ in terms of $n$ is a more delicate matter.  To answer this question using the methods of this paper requires one to make all constants of the form $C(n, k)$ explicit, at the risk of obfuscating the essential ideas.  It would be interesting to determine whether such an approach gives the first primitive recursive bounds for the configuration \eqref{kth AP} with $k$ fixed.
\begin{conjecture}\label{tower conjecture}
If $\vP$ corresponds to an arithmetic progression of length $n$ with square common difference
\begin{equation}\label{short square AP}
x,\quad x+y^2,\quad x+2y^2,\quad \dots,\quad x+ (n-1)y^2,
\end{equation}
then the function $W(\vP, 2)$ is bounded above by a tower of twos of height $n + 5$.  
\end{conjecture}

See \S\ref{quantitative dependence} for evidence towards this.  Much stronger bounds should hold, but it seems unlikely that the methods of this paper suffice for their deduction.  For comparison, when $\vP$ corresponds to an arithmetic progression of length $n$, Gowers \cite{gowers01} has established that
$$
W(\vP, 2) \leq 2\uparrow2\uparrow2\uparrow2\uparrow2\uparrow(n+9).
$$
Here $a \uparrow b$ denotes $a^b$.

Previous results of the type recorded in Theorem \ref{main:thm} concern either linear configurations (when $k = 1$) or two-point non-linear configurations (when $n = 1$).  For the linear case, the first bound for three-term progressions was obtained by Roth \cite{roth}, and for longer configurations by Gowers \cite{gowers01}.  It is this latter  approach  we generalise.  Gowers in fact provides an explicit estimate for the exponent appearing in \eqref{loglog bound}, namely
\begin{equation}\label{gowers estimate}
\begin{split}
c(n, 1) \geq 2^{-2^{n+9}}
\end{split}
\end{equation}
Replicating this when $k > 1$ entails the same issues encountered in addressing Conjecture \ref{tower conjecture}; see \S \ref{quantitative dependence} for more on this. 
Roth's bound has received a number of improvements, see \cite{bloom14} and the references therein, whilst Gowers's result has only been improved in the case of four-point  configurations \cite{greentao}.

For two-point non-linear configurations, the first quantitative bounds were obtained by S\'ark\H{o}zy \cite{sarkozyI, sarkozyIII} and the current records are found in a  preprint of Rice \cite{rice}, with a number of results in the interim (see the references in the latter). 

As previously remarked, for non-linear configurations of length greater than two, the only existing quantitative result  is due to Green \cite{green02}, who considers three-term progressions with difference equal to a sum of two squares.  The logarithmic density of such numbers, together with their multiplicative structure, allows for methods unavailable for the sparser configurations considered in this paper.  Employing Corollary \ref{homogeneous:thm} we obtain an alternative proof of Green's result.

The structure of our argument is discussed in detail in \S\ref{structure}.  In brief, our approach is to apply the method of van der Corput differencing to relate the non-linear configuration \eqref{main config} to a longer linear configuration 
\begin{equation}\label{linear intro config}
\begin{split}
x,\quad x+a_1y,\quad \dots,\quad  x+a_dy
\end{split}
\end{equation}
with length $d = d(n,k)$ dependent only on $n$ and $k$.  We then treat this linear configuration using the methods of Gowers \cite{gowers01}.  The use of van der Corput's inequality allows us to control the size of the coefficients $a_i$.  In essence, these deliberations establish that the polynomial progressions under consideration are controlled by an average of local Gowers norms, each localised to a subinterval.    

The main technical difficulty is that the common difference $y$ in the linear configuration \eqref{linear intro config} is constrained to lie in a much shorter interval than the shift parameter $x$.  Unfortunately, the current inverse theory for the Gowers norms can only handle parameters $x$ and $y$ ranging over similarly sized intervals.  Our strategy, heuristically at least, is to decompose $y$ into a difference of smaller parameters $y = y_1-y_0$.  Changing variables in the shift $x$, we transform the configuration \eqref{linear intro config}
into one of the form
$$
x +b_0 y_0,\quad x+c_1y_1, \quad x + b_2y_0+c_2y_1,\quad \dots, \quad x + b_dy_0+c_dy_1.
$$  
For each fixed value of $x$, one can view this as a shift of the linear configuration 
$$
b_0 y_0,\quad c_1y_1, \quad b_2y_0+c_2y_1,\quad \dots, \quad  b_dy_0+c_dy_1.
$$  
Crucially, in this linear configuration the parameters $y_0$ and $y_1$  range over the same interval.  To each of these shifted `short' configurations we apply Gowers's inverse theorem for the $U^d$-norm \cite{gowers01}, which yields a density increment on an even shorter subprogression. 

We end this introduction by showing how Corollary \ref{homogeneous:thm} follows from Theorem \ref{main:thm}.

\begin{proof}[Proof that Theorem \ref{main:thm} $\implies$ Corollary \ref{homogeneous:thm}]
Suppose that $A \subset [N]$ lacks configurations of the form \eqref{homogeneous config}.  An induction on dimension shows that $\Z^m$ is not contained in any finite union of proper (affine) subspaces of $\R^m$.  Hence there exists $\vz \in \Z^m \setminus K$.
%
%
%
Notice that we must have $y \vz \notin K$ for all $y \in \Z\setminus\set{ 0}$.  Let us define $c_i := P_i(\vz)$ for $i = 1, \dots, n$.  Then by homogeneity, the set $A$ lacks configurations of the form $x, x+c_1y^k, \dots, x+ c_n y^k$ with $y \in \Z\setminus \set{0}$.  The result now follows on employing Theorem \ref{main:thm}.
\end{proof}


\section{The structure of our argument}\label{structure}

The structure of our argument closely follows the general density increment strategy of  \cite{roth, gowers01, green02}.  Let us illustrate these ideas with respect to the configuration
\begin{equation}\label{square 3AP}
\begin{split}
x, \quad x+y^2, \quad x+2y^2 \qquad (y \in \Z\setminus \set{0}).
\end{split}
\end{equation}
Our ultimate aim is to show that  if a set $A \subset [N]$ of density $\delta := |A| / N$ lacks \eqref{square 3AP}, then there exists a long arithmetic progression with square common difference 
\begin{equation}\label{square AP}
\begin{split}
a + q^2 \cdot [N_1]
\end{split}
\end{equation}
on which $A$ has increased density.  Let $A_1$ denote the set of $x \in [N_1]$ for which $a + q^2x \in A$.  Then the fact that \eqref{square AP} has square common difference ensures that $A_1$ also lacks \eqref{square 3AP}, moreover $A_1$ has greater density on $[N_1]$ than $A$ does on $[N]$.  Iterating this argument eventually results in a configuration-free set whose density exceeds one.  This contradiction allows us to extract a quantitative bound on the density of the initial set $A$.  The proof of the density increment step occupies the majority of our paper, the more standard iteration and extraction of a final bound taking place in \S\ref{iteration}. 

Given functions $f_i : \Z \to \R$ define the trilinear operator
\begin{equation*}
\begin{split}
T(f_0, f_1, f_2) := \sum_{x\in\Z} \sum_{y \in \N} f_0(x) f_1(x+y^2) f_2(x+2y^2).
\end{split}
\end{equation*}
Then $T(1_A) := T(1_A, 1_A, 1_A)$ counts the number of configurations \eqref{square 3AP} in the set $A$, and we begin by comparing this to $T(\delta 1_{[N]})$, the expected value were $A$ a random set of density $\delta$.  A crude lower bound shows that 
\begin{equation*}
\begin{split}
T(\delta 1_{[N]}) \gg \delta^3 N^{3/2}.
\end{split}
\end{equation*}
 Hence if 
 \begin{equation}\label{little o}
\begin{split}
|T(1_A) - T(\delta 1_{[N]})|  \leq \trecip{2}T(\delta 1_{[N]})
\end{split}
\end{equation}
then $T(1_A) \gg \delta^3 N^{3/2}$.  In particular, $T(1_A) > 0$, which yields a contradiction if we are assuming that $A$ lacks \eqref{square 3AP}.

It follows that \eqref{little o} does not hold.
Write $f_A = 1_A - \delta 1_{[N]}$ for the balanced function of $A$.  Then by trilinearity there must exist 1-bounded functions $f_i :\Z \to [-1, 1]$ supported on $[N]$, at least one of which is equal to $f_A$, and such that 
\begin{equation}\label{large bal sketch}
\begin{split}
|T(f_0, f_1, f_2)| \gg \delta^3\, T( 1_{[N]}).
\end{split}
\end{equation}
For the sake of exposition, let us assume that $f_2 = f_A$.  So far these deductions are standard, and closely follow \cite{gowers01}.  

\begin{definition}[Gowers uniformity norm]
Given a function $f:\Z \to \R$ with finite support, define
\begin{equation}\label{gowers norm}
\begin{split}
\norm{f}_{U^d}^{2^d} := \sum_{h_1, \dots, h_d} \sum_x \Delta_{h_1, \dots, h_d} f(x),
\end{split}
\end{equation}
where
\begin{equation}\label{difference operator}
\begin{split}
\Delta_h f(x) := f(x+h) f(x)
\end{split}
\end{equation}
and
\begin{equation*}
\begin{split}
\Delta_{h_1, \dots, h_d} f := \Delta_{h_1} \dots \Delta_{h_d} f.
\end{split}
\end{equation*}
For $S \subset \Z$ let us define the $U^d$-norm localised to $S$ by
\begin{equation*}
\begin{split}
\norm{f}_{U^s(S)} := \norm{f1_S}_{U^s}.
\end{split}
\end{equation*}
\end{definition}
Were one able to continue as in \cite{gowers01}, one might hope to prove that there exist absolute constants $d$ and $c >0$ such that 
 \begin{equation}\label{global von neu}
\begin{split}
\sup_{|f_0|, |f_1| \leq 1_{[N]}} |T(f_0, f_1, f_2)|\ll T(1_{[N]})\brac{ \frac{\norm{f_2}_{U^d}}{\norm{1_{[N]}}_{U^d}}}^c.
\end{split}
\end{equation}
Green and Tao \cite{greentaoprimes} call such a result a \emph{generalised von Neumann theorem}.  Combining this with \eqref{large bal sketch} gives 
\begin{equation}\label{large global norm}
\begin{split}
\norm{f_A}_{U^d} \gg \delta^C \norm{1_{[N]}}_{U^d}.
\end{split}
\end{equation}

Such a conclusion does not immediately appear useful unless $d=1$. Unlike the relatively simple $U^1$-(semi)norm
$$
\norm{f}_{U^1} = \Bigabs{\sum_x f(x)},
$$
the higher order $U^d$-norms are much harder to understand.  However, the beef of \cite{gowers01} says that largeness of these norms is explained, at least on a local level, by largeness of the $U^1$-norm.  More precisely, we have the following.

\newtheorem*{gowersthm}{Gowers's inverse theorem}
\begin{gowersthm}For $d \geq 1$ there exist constants $C = C(d)$  and $c = c(d) >0$ such that the following is true.  Suppose that
$f : \Z \to [-1,1]$ satisfies 
\begin{equation*}\label{ModUdBound:eqn}
\norm{f}_{U^d[N]} \geq \delta \norm{1}_{U^d[N]}.
\end{equation*}  
Then one can partition $[N]$ into arithmetic progressions $P_i$, of average length at least $c\delta^CN^{c\delta^{C}}$, such that
\begin{equation}\label{gowers conclusion}
\begin{split}
& \sum_i \norm{f}_{U^1(P_i)} \geq c \delta^{C} \sum_i \norm{1}_{U^1(P_i)}.
\end{split}
\end{equation}
\end{gowersthm}

Employing this in conjunction with \eqref{large global norm} provides a partition of $[N]$ into progressions $P_i$ such that 
\begin{equation}\label{large U1 balance}
\begin{split}
 \sum_i \abs{\sum_{x \in P_i} f_A(x)} \gg  \delta^{C} \sum_i |P_i|.
\end{split}
\end{equation}
On noting that
\begin{equation*}
\begin{split}
 \sum_i \sum_{x \in P_i} f_A(x) = \sum_x f_A(x) = 0, 
\end{split}
\end{equation*}
we may add this to \eqref{large U1 balance} to deduce that there exists an index $i$ such that
\begin{equation}\label{increment conclusion}
\begin{split}
\sum_{x \in P_i} f_A(x) \gg  \delta^C |P_i|.
\end{split}
\end{equation}
This yields a density increment on a subprogression.

There are two flaws with this argument:  The first is that the subprogressions given by Gowers's inverse theorem may not have square common difference as in \eqref{square AP}. Rectifying this requires a purely technical modification of \cite{gowers01}, as first demonstrated for the $U^3$-norm by Green \cite{green02}.  This is explained further in \S\ref{mod gowers section}.

The second flaw, and most problematic, is that no generalised von Neumann inequality of the form \eqref{global von neu} exists in the literature.  In recent  work of Tao and Ziegler \cite{taozieglerconc},  a qualitative version of such a result is deduced which amounts to saying that if $|T(f_0, f_1, f_A)|$ is large, then some global Gowers norm $\norm{f_A}_{U^d}$ must also be large\footnote{See also the video lecture: T.~Tao, \emph{Concatenation theorems for the Gowers uniformity norms}, BIRS workshop on Combinatorics Meets Ergodic Theory, \url{http://goo.gl/UskoBQ}.}.  However, the quantitative dependence in this is at least tower-exponential \cite{taozieglerpersonal}, and is thus  insufficient for our purpose.

The key idea of this paper is to aim for less.  Instead of showing that the  counting operator $T$ is controlled by a single {global} Gowers norm, we show that $T$ is controlled by an average of local Gowers norms, each localised to a subprogression of length approximately $\sqrt{N}$.  
\begin{definition}[Localised $U^d$-norm]
Define the $U^d$-norm localised to scale $M$ by
\begin{equation}\label{localised norm}
\begin{split}
\norm{f}_{U^d \sim M} := \sum_x \norm{f}_{U^d(x + [M])} .
\end{split}
\end{equation}
\end{definition}
This is an average of the Gowers norm of $f$ over every interval of length $M$.  
A more complicated version of this localised norm appears in work of Tao and Ziegler \cite{taoziegler}, and one can think of \eqref{localised norm} as a version of their norm in which a number of extra averaging parameters have been fixed.

Using this norm we are able to prove the following local von Neumann theorem.

\newtheorem*{localvnt}{Local von Neumann theorem}
\begin{localvnt}
Let $f_0, f_1$ be 1-bounded functions supported on $[N]$.  Suppose that
$$
|T(f_0, f_1, f_A)| \geq \delta T(1_{[N]}).
$$ 
Then, provided that $N \geq C \delta^{-C}$, there exists $M$ in the range
$$
\delta^C\sqrt{N} \ll M \ll \delta^{-C} \sqrt{N}
$$ such that we have the local non-uniformity estimate
\begin{equation}\label{averaged lower bound}
\begin{split}
\norm{f_A}_{U^7\sim M} \gg \delta^C \norm{1_{[N]}}_{U^7\sim M}.
\end{split}
\end{equation}
\end{localvnt}

The non-uniformity estimate \eqref{averaged lower bound} can be interpreted as saying that, for at least $c \delta^CN$ of the intervals $x + [M]$, we have 
\begin{equation}\label{short non-uniformity}
\begin{split}
\norm{f_A}_{U^7(x +  [M])} \gg \delta^C \norm{1}_{U^7(x +   [M])}.
\end{split}
\end{equation}
To each of these intervals, we apply Gowers's inverse theorem (suitably modified) to deduce the existence of a partition of $x + [M]$ into fairly long progressions $P_{x, i}$, each with square common difference, and such that 
\begin{equation*}
\begin{split}
 \sum_i \norm{f_A}_{U^1(P_{x, i})} \gg  \delta^{C} \sum_i \norm{1_{[N]}}_{U^1(P_{x,i})}.
\end{split}
\end{equation*}
Taking the trivial partition for the remaining intervals, it follows that for all $x$  there exists a partition of $x + [M]$ into progressions $P_{x, i}$ with square common difference such that 
\begin{equation}\label{U1 estimate}
\begin{split}
\sum_x \sum_i \Biggabs{\sum_{y \in P_{x, i}} f_A(y)}\gg  \delta^{C} \sum_x \sum_i |P_{x, i}|.
\end{split}
\end{equation}
Crucially, since $f_A$ has mean zero, we have
\begin{equation*}
\begin{split}
\sum_x \sum_i \sum_{y \in P_{x, i}} f_A(y) & = \sum_{z \in  [M]}\sum_x  f_A(x+z)\\
& =  0.
\end{split}
\end{equation*}
We may therefore add this quantity to \eqref{U1 estimate} to conclude that there exists a long arithmetic progression $P_{x,i}$ with square common difference such that
\begin{equation*}
\begin{split}
\sum_{y \in P_{x, i}} f_A(x) \gg \delta^C |P_{x, i}|.
\end{split}
\end{equation*}
This yields the required density increment.

In the remainder of this section, we outline the ideas behind the local von Neumann theorem.  The inspiration for our approach is an argument of Green--Tao--Ziegler \cite{taosarkozy} which establishes a local von Neumann theorem for the two-point configuration $x, x+y^2$, showing that it is controlled by an average of local $U^1$-norms.  We begin by sketching their argument.

Let $f_0$ be a 1-bounded function supported on $[N]$.  Then we are interested in bounding the quantity
$$
\sum_{x \in \Z} \sum_{y \in \N} f_0(x) f_A(x+y^2).
$$
Write $I$ for the interval $[\sqrt{N}]$.  By an application of the Cauchy--Schwarz inequality and a change of variables we have
\begin{equation*}
\begin{split}
\abs{\sum_{x \in \Z} \sum_{y \in \N} f_0(x) f_A(x+y^2)}^2 &\leq N \sum_x \sum_{y_1, y_2 \in I} f_A(x+y_1^2) f_A(x + y_2^2)\\
& = N \sum_{|h| < \sqrt{N}}\sum_x f_A(x-h^2) \sum_{y \in I \cap (I - h)}  f_A(x+2hy )\\
&\ll N^{3/2} \max_{|h| < \sqrt{N}}\sum_x \norm{f_A}_{U^1(x + P_h)},
\end{split}
\end{equation*}
where $P_h$ is the progression $\set{2h y : y \in I\cap(I-h)}$.  Here we have made use of the simple identity $(y+ h)^2 - y^2= 2hy + h^2$.   

As stated, there are two deficiencies with this local von Neumann inequality: the common difference of the progression $P_h$ may be zero or a non-square.  Both of these difficulties can be surmounted by replacing the Cauchy--Schwarz inequality with van der Corput's inequality (see \S\ref{vdc section} for a statement of this inequality).  In doing so, one can deduce that for any $H \leq \sqrt{N}$ we have
\begin{equation*}
\begin{split}
\abs{\sum_{x \in \Z} \sum_{y \in \N} f_0(x) f_A(x+y^2)}^2  \ll \frac{N^{3}}{H} + N^{3/2}\max_{0 < |h| < H} \norm{f_A}_{U^1(x + P_h)}.
\end{split}
\end{equation*}
This ensures that the common difference of $P_h$ is non-zero, yet it still may be a non-square.  However, since we can control the size of this common difference (it is bounded above by $2H$), we can partition $P_h$ into at most $2H$ further subprogressions of square common difference, namely $(2h)^2$.  It follows that there exists a progression $P$ with square common difference such that
\begin{equation}\label{baby vdc}
\begin{split}
\abs{\sum_{x \in \Z} \sum_{y \in \N} f_0(x) f_A(x+y^2)}^2  \ll \frac{N^{3}}{H} + N^{3/2}H \sum_x\norm{f_A}_{U^1(x + P)}.
\end{split}
\end{equation}

Taking $H = C\delta^{-C}$ in \eqref{baby vdc}, the assumption
 \begin{equation*}
\begin{split}
\abs{\sum_{x \in \Z} \sum_{y \in \N} f_0(x) f_A(x+y^2)}  \geq \delta N^{3/2}
\end{split}
\end{equation*}
implies that
\begin{equation}\label{two point increment conclusion}
\begin{split}
\sum_x \norm{f_A}_{U^1(x+ P)} \gg \delta^C N^{3/2}.
\end{split}
\end{equation}
Employing the trivial estimate $|f_A| \leq 1_{[N]}$ and assuming that $N \geq C \delta^{-C}$, the left-hand side of \eqref{two point increment conclusion} is at most $O(N|P|)$, which gives the lower bound
$$
|P| \gg \delta^C N^{1/2}.
$$
  The corresponding upper bound $|P| \leq \sqrt{N}$ follows since $P \subset \{2hy : y \in [\sqrt{N}]\}$. We have therefore deduced the following.
  
  \newtheorem*{2localvnt}{Two-point local von Neumann}
\begin{2localvnt}
Let $f_0$ be a 1-bounded function supported on $[N]$.  Suppose that
$$
\abs{\sum_{x \in \Z} \sum_{y \in \N} f_0(x) f_A(x+y^2)} \geq \delta N^{3/2}.
$$ 
Then, provided that $N \geq C \delta^{-C}$, there exists a progression $P$ with square common difference and length 
$$
\delta^C\sqrt{N} \ll |P| \leq \sqrt{N}
$$ such that we have the local non-uniformity estimate
\begin{equation*}
\begin{split}
\sum_x \norm{f_A}_{U^1(x+ P)} \gg \delta^C \sum_x \norm{1_{[N]}}_{U^1(x+ P)}.
\end{split}
\end{equation*}
\end{2localvnt}  
 
 For longer configurations, one must employ the van der Corput inequality considerably more times.  Let us illustrate this for the inhomogeneous counting operator
 \begin{equation}
\begin{split}
\tilde{T}(f_0, f_1, f_2) := \sum_{x \in \Z} \sum_{y \in \N} f_0(x) f_1(x+y) f_2(x+y^2).
\end{split}
\end{equation}
A single application of van der Corput's inequality gives some $0 < |h_1| < H$ and some interval $I \subset [\sqrt{N}]$ for which 
$$
 |\tilde{T}(f_0, f_1, f_2)|^2  \ll \frac{N^{3}}{H} + 
 N^{3/2}\abs{\sum_x \sum_{y \in I}  f_1(x) f_1(x+h_1) f_2(x+y^2-y)f_2(x+(y+h_1)^2-y )}.
$$

To save on notation, let us write $\tilde{f}$ for a function of the form $x \mapsto f(x+b)$ for some fixed integer $b$.  Different occurrences of $\tilde{f}$ in the same equation may refer to different values of $b$, but no confusion should arise. 
A second application of van der Corput's inequality gives the existence of some 
  $0 < |h_2| < H$ and a second interval $I' \subset I$ such that 
$$
 |\tilde{T}(f_0, f_1, f_2)|^4  \ll \frac{N^{6}}{H} +  
N^{9/2} \abs{\sum_x \sum_{y \in I'}   f_2(x)\tilde{f}_2(x+2h_1y )\tilde{f}_2(x+2h_2y)\tilde{f}_2(x+2(h_1+h_2)y)}.
$$

Suppose that 
\begin{equation}\label{tilde T estimate}
\begin{split}
|\tilde{T}(f_0, f_1, f_A)| \geq \delta N^{3/2}.
\end{split}
\end{equation}
 Then as before, taking $H := C\delta^{-C}$, we can conclude the existence of non-zero integers $|a_i| \ll \delta^{-C}$ and $M \leq \sqrt{N}$ such that
\begin{equation}\label{linear estimate}
\begin{split}
 \abs{\sum_x \sum_{y \in [M]}  f_A(x)\tilde{f}_A(x+a_1y )\tilde{f}_A(x+a_2y)\tilde{f}_A(x+a_3y)} \gg \delta^CN^{3/2}.
\end{split}
\end{equation}
The trivial estimate $|f_A| \leq 1_{[N]}$ also yields the lower bound $M \gg \delta^{C} \sqrt{N}$.  Our next step is to convert \eqref{linear estimate} into a local non-uniformity estimate.  Let us demonstrate how this is done for the simpler linear average
\begin{equation}\label{3pt linear average}
\begin{split}
\abs{\sum_x \sum_{y \in [M]}   f_A(x)f_A(x+y )f_A(x-y)} \gg \delta^C NM.
\end{split}
\end{equation}

Let $M_1 \leq M$.  Then one can re-write the inner sum in \eqref{3pt linear average} as within $O(M_1)$ of
\begin{multline*}
M_1^{-2} \sum_{y_1, y_2 \in [M_1]}\ \sum_{y \in [M]-y_1+ y_2 } f_A(x)f_A(x+y )f_A(x-y )= \\  M_1^{-2} \sum_{y_1, y_2 \in [M_1]} \sum_{y \in [M] } f_A(x)f_A(x+y -y_1 +y_2)f_A(x-y +y_1 -y_2) .
\end{multline*}
Changing variables in $x$ and maximising over $y$, we deduce that the left-hand side of \eqref{3pt linear average} is at most 
\begin{equation*}
\begin{split}
\frac{M}{M_1^{2}}\abs{ \sum_x \sum_{y_1, y_2 \in [M_1]}  {f_A}(x+y_1)\tilde{f}_A(x+y_2)\tilde{f}_A(x+2 y_1 -y_2)} + O(NM_1).
\end{split}
\end{equation*}
Taking $M_1 = c \delta^CM$, inequality \eqref{3pt linear average} implies that
\begin{equation}\label{balanced 3pt linear}
\begin{split}
\abs{ \sum_x \sum_{y_1, y_2 \in [M_1]}  {f}_A(x+y_1)\tilde{f}_A(x+y_2)\tilde{f}_A(x+2 y_1 -y_2)} \gg \delta^C NM_1^2
\end{split}
\end{equation}
For fixed $x$ define the functions $g_1(y) := {f}_A(x + y)1_{[M_1]}(y)$, $g_2(y) := \tilde{f}_A(x + y)1_{[M_1]}(y)$ and $h(y) := \tilde{f}_A(x+ y) 1_{[-2M_1, 2M_1]}(y)$.  Then by orthogonality
\begin{equation*}
\begin{split}
\sum_{y_1, y_2 \in [M_1]} g_1(y_1) g_2(y_2) h(2y_1 -y_2) & = \int_{\T} \hat{g}_1(-2\alpha)  \hat{g}_2(\alpha)  \hat{h}(\alpha)\intd \alpha,
\end{split}
\end{equation*}
where we have defined the Fourier transform by
\begin{equation}\label{fourier transform}
\begin{split}
\hat{g}(\alpha) := \sum_x g(x) e(\alpha x).
\end{split}
\end{equation}
Using H\"older's inequality, Parseval and the (easily checked) identity $\norm{g}_{U^2(\Z)}$  $= \norm{\hat{g}}_{L^4(\T)}$, we deduce that
\begin{equation*}
\begin{split}
\sum_{y_1, y_2 \in [M_1]} g_1(y_1) g_2(y_2) h(2y_1 -y_2) & \leq \norm{g_1}_{L^2(\Z)}\norm{g_2}_{U^2(\Z)}  \norm{h}_{U^2(\Z)}.
\end{split}
\end{equation*}
Since $\norm{g_1}_{L^2(\Z)} \leq M_1^{1/2}$ and $\norm{g_2}_{U^2(\Z)} \leq M_1^{3/4}$ and $\delta^C \sqrt{N}\ll M_1 \leq \sqrt{N}$ we can set $I = [-2M_1, 2M_1]$ and conclude that 
\begin{equation*}
\begin{split}
\sum_x \norm{f_A}_{U^2(x + I)} \gg \delta^C N^{11/8}
\end{split}
\end{equation*}
for some interval $I$ satisfying $\delta^C\sqrt{N} \ll |I| \ll \delta^{-C} \sqrt{N}$.  This gives us our local von Neumann estimate.
 
 A similar argument can be made to work for the longer linear average \eqref{linear estimate}, replacing the $U^2$-norm with the $U^3$-norm.  The general argument for arbitrarily long linear configurations is carried out in \S\ref{local von neu section}.
 \subsection{Quantitative dependence on $n$ and $k$}\label{quantitative dependence}
The linearisation process, which takes a large non-linear polynomial average such as \eqref{tilde T estimate}, and converts it into a large linear average \eqref{linear estimate}, is generalisable and carried out in detail in \S\ref{linearisation section}.  As the complexity of the polynomial configuration increases, the number of applications of van der Corput's inequality increases inordinately.  For instance, for arithmetic progressions with square common difference \eqref{short square AP} we have the following table.

\begin{center}
\begin{tabular}{|p{2cm}|p{5cm}|p{5cm}|}
\hline
Progression length & Number of differencing steps required & Resulting degree of local Gowers norm $U^d$\\ 
\hline  
2 & 1 & 1  \\
\hline 
3 &  3 &  7\\
\hline
4 & 11 & 2047\\ \hline
$n$ &  $A_n := A_{n-1} +2^{A_{n-1}}$ with $A_1 :=0$ & $2^{A_n} - 1< \underbrace{2\uparrow  \dots \uparrow 2}_{\text{height $n$}}$\\ \hline
\end{tabular}
\end{center}
 Writing $T(n)$ for a tower of twos of height $n$, the bound $A_n + 1 \leq T(n-1)$ follows inductively via
 $$
 A_n+1 = A_{n-1} + 1 + 2^{A_{n-1}} \leq T(n-2) + \trecip{2} T(n-1) \leq T(n-1).
 $$

 Ensuring that the linearisation process does in fact terminate requires a fairly abstract inductive scheme to be carried out, a scheme which is essentially the PET-induction of Bergelson--Leibman \cite{bergelsonleibman96}.  The number of steps in this process grows so rapidly that, for general configurations, we have refrained from estimating the explicit quantitative dependence on $n$ and $k$  in all absolute constants appearing in the deduction of \eqref{linear estimate} from \eqref{tilde T estimate}.  This is one reason for the absence of a lower bound for $c(n, k)$ in Theorem \ref{main:thm}, in contrast to Gowers's estimate \eqref{gowers estimate}.  A second obstacle is that one must make explicit all constants appearing in our modification of Gowers's inverse theorem (Theorem \ref{ModInverse:thm}).

\subsection{Homogeneous versus inhomogeneous}
Although we have sketched how to prove a local von Neumann theorem for the inhomogeneous configuration 
\begin{equation}\label{inhom}
x,\quad x+y,\quad x+y^2,
\end{equation} 
this configuration is not covered by Theorem \ref{main:thm}.  More generally, we demonstrate in \S\S\ref{vdc section}--\ref{local von neu section} that a local von Neumann theorem can be proved for \emph{any} configuration of the form $x, x+P_1(y), \dots, x+P_n(y)$, where $P_i \in \Z[y]$.  The main obstacle to obtaining density bounds for sets lacking inhomogeneous configurations such as \eqref{inhom} is the density increment step.  To see this, note that if $A\subset [N]$ lacks \eqref{inhom} and has a density increment on progression of the form $x + q \cdot [N_1]$, then defining $A_1$ to be the set of $y \in [N_1]$ such that $x+qy \in A$, we see that $A_1$ lacks configurations of the form 
\begin{equation}\label{trivial config}
\begin{split}
x, \quad x+y, \quad x+qy^2 \qquad (y \in \Z \setminus \set{0}).
\end{split}
\end{equation}
The problem here is that the methods we have discussed (primarily Gowers's inverse theorem) deliver an increment on a progression with common difference $q$ which is likely to be \emph{much} larger than its length $N_1$.  As a result, \emph{every} subset of $[N_1]$ lacks the configuration \eqref{trivial config}, and there is no possibility of iterating the density increment argument.

 The remaining sections are occupied with proving a rigorous version of the sketch outlined in this section.
 
\section{van der Corput differencing}\label{vdc section}

The aim of this section and its sequel is to show how a large non-linear average 
\begin{equation}\label{non-linear average}
\begin{split}
\sum_{x \in \Z} \sum_{y \in \N} f_0(x) f_1(x +c_1y^k) \dotsm f_n(x+c_n y^k)
\end{split}
\end{equation}
 leads to a large \emph{linear} average, albeit over a longer configuration.  This deduction proceeds via van der Corput differencing, each application of which bounds a polynomial average such as \eqref{non-linear average} by a polynomial average of smaller degree.  The precise notion of degree is introduced in \S\ref{linearisation section}; in this section we confine ourselves to describing the  differencing step.

 \begin{lemma}[van der Corput inequality]\label{lem:vdc}
Let $g : \Z \to \C$ be a function supported on a finite set $\mathcal{S}\subset \Z$.  Given a finite set $\mathcal{H} \subset \Z$, write $r_\mathcal{H}(h)$ for the number of pairs $(h_1, h_2) \in \mathcal{H}^2$ such that $h_1-h_2 = h$.  Then we have the estimate
\begin{equation*}
\Bigabs{\sum_y g(y)}^2 \leq \frac{|\mathcal{S} - \mathcal{H}|}{|\mathcal{H}|^2} \sum_{h }r_\mathcal{H}(h)\sum_y g(y+h)\overline{g(y)}.
\end{equation*}
\end{lemma}

\begin{proof}
By a change of variables, for any $ h \in \Z$ we have 

$$
\sum_y g(y) =  \sum_y g(y+h).
$$ 
Averaging over $h \in \mathcal{H}$ and interchanging the order of summation gives
$$
\sum_y g(y) = \recip{|\mathcal{H}|}  \sum_y \sum_{ h \in \mathcal{H}}g(y+h).
$$
The function 
$$
y \mapsto \sum_{h \in \mathcal{H}} g(y+h)
$$
is supported on the difference set $\mathcal{S} - \mathcal{H}$.  Squaring and applying Cauchy--Schwarz, we deduce that
\begin{align*}
\Bigabs{\sum_y g(y)}^2 & \leq \frac{|\mathcal{S} - \mathcal{H}|}{|\mathcal{H}|^2}  \sum_y \sum_{ h_1, h_2 \in \mathcal{H}}g(y+h_1)\overline{g(y+h_2)}\\
& = \frac{|\mathcal{S} - \mathcal{H}|}{|\mathcal{H}|^2}  \sum_{ h_1, h_2 \in \mathcal{H}}\sum_yg(y+h_1-h_2)\overline{g(y)}\\
& = \frac{|\mathcal{S} - \mathcal{H}|}{|\mathcal{H}|^2}  \sum_{h }r_\mathcal{H}(h)\sum_yg(y+h)\overline{g(y)}.
\end{align*}
\end{proof}


%
%

\begin{lemma}[weak van der Corput]\label{weak vdc}  Suppose that $g : \Z^2 \to [-1, 1]$ is supported on $[N]\times [M]$ with $N, M \geq 1$.  Let $1 \leq H\leq M$ and let $\mathcal{H}_1\subset \Z$ denote a set containing $0$.  Then there exists $h \in [H]\setminus \mathcal{H}_1$ such that
\begin{equation*}
\sum_x\Bigbrac{\sum_y g(x,y)}^2 \ll \frac{NM^2|\mathcal{H}_1|}{H}+ M \sum_{x,y} g(x,y+h)g(x,y).
\end{equation*}
\end{lemma}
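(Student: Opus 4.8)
The plan is to obtain this as a pigeonholed packaging of the van der Corput inequality (Lemma~\ref{lem:vdc}), applied one $x$-slice at a time. For fixed $x$ set $g_x(y) := g(x,y)$, a function supported on the finite set $\mathcal{S} := [M]$; since $g$ is real-valued, Lemma~\ref{lem:vdc} applied with $\mathcal{H} := [H]$ gives
$$
\Bigbrac{\sum_y g_x(y)}^2 \leq \frac{|\mathcal{S} - \mathcal{H}|}{H^2}\sum_h r_{[H]}(h)\sum_y g_x(y+h)g_x(y).
$$
Here $|\mathcal{S} - \mathcal{H}| = M + H - 1 < 2M$ since $H \leq M$, while $r_{[H]}(h) = \max(H - |h|, 0)$ is symmetric in $h$, supported on $|h| < H$, bounded above by $H$, and satisfies $\sum_h r_{[H]}(h) = H^2$. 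Summing over $x$ and writing $\Phi(h) := \sum_{x,y}g(x,y+h)g(x,y)$, which is symmetric ($\Phi(-h) = \Phi(h)$) and satisfies $|\Phi(h)| \leq NM$ because $g$ is $1$-bounded and supported on $[N]\times[M]$, and noting that the resulting sum is nonnegative (it equals $\sum_{x,y}(\sum_{h\in[H]}g_x(y+h))^2$), we arrive at
$$
\sum_x\Bigbrac{\sum_y g(x,y)}^2 \leq \frac{2M}{H^2}\sum_{|h| < H}r_{[H]}(h)\Phi(h).
$$

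The task is now to replace this weighted average over $h$ by a single term $M\Phi(h)$ with $h \in [H]\setminus\mathcal{H}_1$, at the cost of the error $NM^2|\mathcal{H}_1|/H$. Split the inner sum according to whether $h \in \mathcal{H}_1$. The $h\in\mathcal{H}_1$ terms — which include the diagonal term $h = 0$, as $0 \in \mathcal{H}_1$ — total at most $H \cdot NM \cdot |\mathcal{H}_1|$ in absolute value, contributing $O(NM^2|\mathcal{H}_1|/H)$ once multiplied by $2M/H^2$; this goes into the error. For the remaining terms the only subtlety is that the correlations $\Phi(h)$ need not be nonnegative, so instead of an averaging argument I select $h^*$ with $0 < |h^*| < H$ and $h^* \notin \mathcal{H}_1$ \emph{maximising} $\Phi$ over this range (by the symmetry of $\Phi$ and of $r_{[H]}$ one may take $h^* \in [H]\setminus\mathcal{H}_1$). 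Bounding
$$
\sum_{\substack{0 < |h| < H\\ h \notin \mathcal{H}_1}} r_{[H]}(h)\Phi(h) \leq \Phi(h^*)\sum_{\substack{0 < |h| < H\\ h \notin \mathcal{H}_1}} r_{[H]}(h) \leq H^2 \Phi(h^*) + H \cdot NM \cdot |\mathcal{H}_1|
$$
— where the final inequality bounds the sum of weights above by $H^2$ when $\Phi(h^*) \geq 0$ and below by $H^2 - H|\mathcal{H}_1|$ when $\Phi(h^*) < 0$ — and feeding both contributions back, one obtains
$$
\sum_x\Bigbrac{\sum_y g(x,y)}^2 \leq 2M\Phi(h^*) + \frac{4NM^2|\mathcal{H}_1|}{H},
$$
which is the claimed estimate with $h = h^*$.

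I do not expect a serious obstacle here: the argument is a single application of Lemma~\ref{lem:vdc} followed by careful bookkeeping of the error term. The two points worth a little attention are that $h$ must be chosen to maximise the correlation $\Phi$ rather than picked arbitrarily, and the degenerate regime in which $\mathcal{H}_1$ is so large that no such $h^*$ exists or $[H] \setminus \mathcal{H}_1$ is empty — which forces $|\mathcal{H}_1| \gg H$, whereupon the error $NM^2|\mathcal{H}_1|/H$ already dominates the trivial bound $\sum_x(\sum_y g(x,y))^2 \leq NM^2$ and the conclusion follows at once. In the applications of the lemma $\mathcal{H}_1$ is a fixed set of boundedly many previously-chosen differences and $H$ a large power of $\delta^{-1}$, so this regime does not arise, and indeed the left-hand side is then so large that the correlation $\Phi(h^*)$ is automatically positive.
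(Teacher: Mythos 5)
Your argument is correct and follows essentially the same route as the paper: apply Lemma~\ref{lem:vdc} slice by slice with $\mathcal{S}=[M]$, $\mathcal{H}=[H]$, discard the $h\in\mathcal{H}_1$ terms (including $h=0$) into the error $NM^2|\mathcal{H}_1|/H$, and select the remaining difference by maximising the correlation. Your explicit handling of the case where the maximal correlation is negative, and of the degenerate case $[H]\setminus\mathcal{H}_1=\emptyset$, is if anything more careful than the paper's appeal to ``pigeon-hole and monotonicity of expectation.''
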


\begin{proof}
Let us apply Lemma \ref{lem:vdc} with $g_x(y) := g(x,y)$,  $\mathcal{S} := [M]$ and $\mathcal{H} := [H]$, giving
$$
\sum_x\Bigbrac{\sum_y g(x,y)}^2 \leq \frac{2M}{\floor{H}} \sum_{h} \frac{r_{[H]}(h)}{\floor{H}}\sum_{x,y} g(x, y+h)g(x,y).
$$
A change of variables yields the identity
$$
\sum_{x,y} g(x, y+h) g(x,y) = \sum_{x,y} g(x,y)g(x,y - h).
$$
Combining this with the fact that $r_{[H]}(0) = \floor{H}$, $r_{[H]}(-h) = r_{[H]}(h)$ and $r_{[H]}(h) = 0$ if $|h| \geq H$, we have
\begin{equation}\label{weak vdc inequality}
 \sum_{h} \frac{r_{[H]}(h)}{\floor{H}}\sum_{x,y} g(x, y+h)g(x,y)  =
\sum_{x,y} g(x,y)^2 + \sum_{h \in [H]} \frac{2r_{[H]}(h) }{\floor{H}}\sum_{x,y} g(x,y+ h)g(x,y).
\end{equation}
Using the trivial estimates $|\supp(g)|\leq NM$, $r_{[H]}( h) \leq \floor{H}$ and $|\mathcal{H}_1 \cap [H]| \leq |\mathcal{H}_1| -1$, the right-hand side of \eqref{weak vdc inequality} is at most
$$
NM + 2(|\mathcal{H}_1|-1) NM +  \sum_{h \in [H]\setminus\mathcal{H}_1} \frac{2r_{[H]}(h) }{\floor{H}}\sum_{x,y} g(x,y+h)g(x,y)
$$
By the pigeon-hole principle there exists $h' \in [H]\setminus\mathcal{H}_1$ such that
$$
\sum_{h \in [H]\setminus \mathcal{H}_1} \frac{r_{[H]}(h)}{\floor{H}}\sum_{x,y} g(x,y+h)g(x,y) \leq \floor{H} \sum_{x,y} g(x,y+h')g(x,y).
$$
The required inequality follows.\end{proof}

One can think of the set $\mathcal{H}_1$ as those `bad' differencing parameters $h$  we wish to avoid.

\begin{lemma}[Linearisation step]\label{linearisation step}
Let $f_0, f_1, \dots, f_n : \Z \to [-1, 1]$ be 1-bounded functions supported on $[N]$, let $I$ be an interval of at most $M$ integers, let $\mathcal{H}_1$ be a set containing $0$ and let $P_1, \dots, P_n : \Z \to \Z$.   Then for any $H \leq M$ there exists $h \in [H] \setminus \mathcal{H}_1$ such that
\begin{multline*}
 \abs{ \frac{1}{N M}  \sum_x \sum_{y \in I} f_0(x) f_1\bigbrac{x+P_1(y)} \dotsm f_n\bigbrac{x+P_n(y)}} \ll \\
 \brac{\frac{|\mathcal{H}_1|}{H}}^{1/2} + \brac{\frac{1}{N M} \sum_x \sum_{y \in I\cap (I-h)} \prod_{\substack{1 \leq i \leq n\\ \omega \in \set{0,1}}}f_i\bigbrac{x+P_i(y + \omega h) - P_1(y)}}^{1/2}.
\end{multline*}
\end{lemma}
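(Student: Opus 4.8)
The plan is to view the left-hand side as a normalised average over the pair of parameters $(x,y)$ and to difference in the variable $y$ using the weak van der Corput inequality (Lemma~\ref{weak vdc}), which is designed for exactly this step. First I would introduce the auxiliary function
$$
g(x,y) := f_0(x)\,1_I(y)\prod_{i=1}^n f_i\bigl(x+P_i(y)\bigr),
$$
which is $1$-bounded (each $f_i$ and $1_I$ being bounded by $1$) and satisfies $g(x,y)\neq0\Rightarrow f_0(x)\neq0,\ y\in I$, so that $g$ is supported on $[N]\times I$. Since $|I|\le M$, I can enlarge $I$ to an interval of exactly $M$ consecutive integers and translate it to $[M]$ --- a relabelling of $y$ that changes no complete sum over $y\in\Z$ --- whence Lemma~\ref{weak vdc} applies to $g$ with the given $H\le M$ and the given set $\mathcal H_1\ni0$.

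Next I would apply the Cauchy--Schwarz inequality in $x$. Since $\sum_y g(x,y)$ is supported on $x\in[N]$, one gets $\bigl|\sum_{x,y}g(x,y)\bigr|^2\le N\sum_x\bigl(\sum_y g(x,y)\bigr)^2$; feeding in the conclusion of Lemma~\ref{weak vdc} bounds this by $\ll N^2M^2|\mathcal H_1|/H+NM\sum_{x,y}g(x,y+h)g(x,y)$ for some $h\in[H]\setminus\mathcal H_1$. Dividing by $(NM)^2$, bounding $\tfrac1{NM}\sum_{x,y}g(x,y+h)g(x,y)$ by its modulus, taking square roots and using $\sqrt{a+b}\le\sqrt a+\sqrt b$ then produces
$$
\Bigl|\tfrac1{NM}\sum_{x,y}g(x,y)\Bigr| \ll \Bigl(\tfrac{|\mathcal H_1|}{H}\Bigr)^{1/2} + \Bigl|\tfrac1{NM}\sum_{x,y}g(x,y+h)g(x,y)\Bigr|^{1/2},
$$
which already has the asserted shape. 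It then remains to identify the differenced term: expanding $g(x,y+h)g(x,y)$ contributes the factor $f_0(x)^2$, the indicator $1_I(y+h)1_I(y)=1_{I\cap(I-h)}(y)$, and the product $\prod_{i=1}^n f_i(x+P_i(y+h))f_i(x+P_i(y))$; performing, for each fixed $y$, the substitution $x\mapsto x-P_1(y)$ (a bijection of $\Z$) rewrites the last product as $\prod_{\substack{1\le i\le n\\ \omega\in\set{0,1}}}f_i\bigl(x+P_i(y+\omega h)-P_1(y)\bigr)$, and bounding $f_0(x-P_1(y))^2\le1$ (replacing, if one wishes, each $f_i$ by its modulus, which is harmless since the $f_i$ are $1$-bounded) identifies the second term with the claimed average.

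I do not expect any genuine obstacle: the substance of the argument is packaged in Lemma~\ref{weak vdc}, and the present statement is the bridge carrying out one van der Corput differencing on a polynomial average. The only points needing a little attention are to difference in $y$ rather than $x$ --- so that the output records both shifts $P_i(y)$ and $P_i(y+h)$ for every $i$ --- and to perform the translation $x\mapsto x-P_1(y)$, chosen so that the $i=1$, $\omega=0$ factor collapses to $f_1(x)$, free of $y$; this normalisation is what makes the right-hand side amenable to the repeated differencing and the linearisation carried out in the sections that follow. The rest is bookkeeping with the interval $I$ and the normalising factor $NM$.
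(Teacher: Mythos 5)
Your strategy is the paper's: one Cauchy--Schwarz in $x$, one application of Lemma~\ref{weak vdc} in $y$, then the substitution $x\mapsto x-P_1(y)$. But the final identification contains a genuine error. Having absorbed $f_0$ into $g$, your differenced average is
\[
\sum_{x}\sum_{y\in I\cap(I-h)} f_0\bigl(x-P_1(y)\bigr)^2\prod_{\substack{1\le i\le n\\ \omega\in\{0,1\}}}f_i\bigl(x+P_i(y+\omega h)-P_1(y)\bigr),
\]
and you propose to delete the weight $f_0(x-P_1(y))^2\in[0,1]$ by ``bounding it by $1$''. For a signed summand this is not a valid upper bound: the weighted sum can be large while the unweighted sum vanishes by cancellation (take the product to equal $+1$ where $f_0^2=1$ and $-1$ where $f_0^2=0$). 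Your parenthetical fallback --- replacing each $f_i$ by $|f_i|$ --- does make the deletion legitimate, but then you have proved a different inequality, with $\prod|f_i|$ on the right, which is not the statement of the lemma and is useless downstream: the entire linearisation depends on retaining the \emph{signed} function $f_n=f_A$ so that the resulting linear average detects non-uniformity.

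The repair is exactly the paper's arrangement of the same computation: keep $f_0$ out of $g$. Set $g(x,y):=1_{[N]}(x)1_I(y)\prod_{i=1}^nf_i\bigl(x+P_i(y)\bigr)$ and use $f_0$ as the weight in the Cauchy--Schwarz step,
\[
\Bigl(\sum_x f_0(x)\sum_y g(x,y)\Bigr)^2\le\Bigl(\sum_xf_0(x)^2\Bigr)\sum_x\Bigl(\sum_yg(x,y)\Bigr)^2\le N\sum_x\Bigl(\sum_yg(x,y)\Bigr)^2,
\]
so that $f_0$ never enters the differenced term at all. The remainder of your argument (Lemma~\ref{weak vdc} applied to this $g$, division by $(NM)^2$, $\sqrt{a+b}\le\sqrt a+\sqrt b$, and the shift $x\mapsto x-P_1(y)$) then goes through as you wrote it.
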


\begin{proof}
Shifting the argument of the functions $P_i$ if necessary, we may assume that $I \subset [M]$. Set $g(x,y) :=  f_1\bigbrac{x+P_1(y)} \dotsm f_n\bigbrac{x+P_n(y)} 1_{[N]}(x)1_{I}(y)$ and let $H \in [1, M]$.
Then by the Cauchy--Schwarz inequality and Lemma \ref{weak vdc} there exists $h \in [H]\setminus\mathcal{H}_1$ such that 
\begin{align*}
\Bigbrac{\sum_x f_0(x)\sum_{y} g(x,y)}^2
&\leq \Bigbrac{\sum_x f_0(x)^2 }\sum_x\Bigbrac{\sum_y g(x,y)}^2\\
& \ll N^2M^2|\mathcal{H}_1|H^{-1} + NM \sum_{x,y} g(x,y+ h)g(x,y).
\end{align*}
\end{proof}

\section{The linearisation process}\label{linearisation section}

In this section we iteratively apply Lemma \ref{linearisation step}, beginning with the configuration $x, x+ c_1y^k, \dots, x + c_ny^k$ and eventually obtaining a configuration of the form $x, x+a_1y, \dots, x + a_dy$.  The complexity of the intermediate configurations requires us to take an abstract approach.  Moreover, each application of the linearisation step necessitates a number of technical assumptions whose sole purpose is to guarantee that the coefficients $a_i$ in our final linear configuration are non-zero and distinct.  
Before proceeding to describe the argument in general, we illustrate the underlying ideas for the configuration $x, x+y^2, x+2y^2$.

\begin{lemma}[Linearisation for square 3APs]\label{linearisation for sqAPs}
Let $f_0, f_1, f_2 : \Z \to [-1, 1]$ be supported on $[N]$ and let $1 \leq H \leq \sqrt{N}$. Then there exists an interval $I \subset [\sqrt{N}]$ and integers $a_i,b_i$ with the $a_i$ distinct, $1 \leq a_i \ll H$ and such that
\begin{multline}\label{sq3AP 8-term conclusion}
 \Bigabs{ N^{-3/2}\sum_{x\in \Z} \sum_{y \in \N} f_0(x) f_1(x+y^2) f_2(x+2y^2)}\ll \\ H^{-1/8} + 
\Bigabs{N^{-3/2}\sum_x \sum_{y \in I} f_2(x)f_2(x+a_1y+b_1)\dotsm f_2(x+a_7 y +b_7) }^{1/8}.
\end{multline}
\end{lemma}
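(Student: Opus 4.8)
The plan is to deduce \eqref{sq3AP 8-term conclusion} from three successive applications of the linearisation step, Lemma \ref{linearisation step}. Each application asks us to nominate the reference polynomial $P_1$ of that lemma, and the whole argument turns on choosing these three references correctly. The principle is: \emph{so long as a factor attached to $f_1$ still genuinely depends on $y$, reference against one of its polynomials} — this holds the two $f_2$-factors at the top degree $2$, and so destines them to become the linear configuration we are after; \emph{once every $f_1$-factor has become a function of $x$ alone, reference one of the surviving $f_2$-quadratics against another}. Throughout we keep $M = \sqrt N$ fixed, so that every interval arising is contained in $[\sqrt N]$ and the normalisation $\tfrac1{NM} = N^{-3/2}$ never changes; the hypothesis $H \leq \sqrt N$ is exactly what is needed to invoke Lemma \ref{linearisation step} at each step.

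Here is the concrete sequence of steps. Since the summand in \eqref{sq3AP 8-term conclusion} vanishes for $y \geq \sqrt N$, I would regard the $y$-sum as running over $[\sqrt N]$, and apply Lemma \ref{linearisation step} with $n = 2$, polynomials $(y^2, 2y^2)$, functions $(f_1, f_2)$, reference $P_1 = y^2$ and $\mathcal{H}_1 = \set 0$. Using $(y+h)^2 - y^2 = 2hy + h^2$ and $2(y+h)^2 - y^2 = y^2 + 4hy + 2h^2$, this produces some $h_1 \in [H]$ and an interval $I_1 \subset [\sqrt N]$ for which the left side of \eqref{sq3AP 8-term conclusion} is $\ll H^{-1/2} + T_1^{1/2}$, where $T_1$ is $N^{-3/2}$ times the absolute value of the average over $x$ and $y \in I_1$ of $f_1(x) f_1(x + 2h_1 y + h_1^2) f_2(x + y^2) f_2(x + y^2 + 4h_1 y + 2h_1^2)$. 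Now $f_1(x)$ is $y$-free, so I would move it into the ``$f_0$'' slot and apply Lemma \ref{linearisation step} again, with $n = 3$, reference equal to the \emph{affine} polynomial $2h_1 y + h_1^2$, and $\mathcal{H}_1 = \set{0, 2h_1}$; since $2h_1(y+h_2) + h_1^2 - (2h_1y + h_1^2) = 2h_1 h_2$ does not involve $y$, the second $f_1$-factor collapses too. One obtains $h_2 \in [H] \setminus \set{0, 2h_1}$, an interval $I_2 \subset I_1$, and $T_1 \ll H^{-1/2} + T_2^{1/2}$, where $T_2$ is $N^{-3/2}$ times the absolute value of the average of $f_1(x) f_1(x + 2h_1 h_2) \prod_{i=1}^{4} f_2(x + y^2 + \ell_i(y))$, the $\ell_i$ being integer affine polynomials whose $y$-coefficients compute to $-2h_1,\ 2h_2 - 2h_1,\ 2h_1,\ 2h_2 + 2h_1$.

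For the third step all four $y$-dependent factors are quadratics with the common leading term $y^2$. I would absorb the $y$-free product $f_1(x) f_1(x + 2h_1 h_2)$ — one $1$-bounded function, supported on a translate of $[N]$ — into the ``$f_0$'' slot and apply Lemma \ref{linearisation step} with $n = 4$, reference $P_1 = y^2 + \ell_1(y)$, and $\mathcal{H}_1 = \set 0 \cup \set{h_2,\ h_2 - 2h_1,\ 2h_1,\ 2h_1 - h_2,\ h_2 + 2h_1}$. As the four quadratics share the leading term $y^2$, every difference $P_i(y + \omega h_3) - P_1(y)$ is affine in $y$: the factor with $i = 1$, $\omega = 0$ is precisely $f_2(x)$, and the other seven are of the form $f_2(x + a_j y + b_j)$ with $a_j$ running through $\set{2h_2,\ 4h_1,\ 2h_2 + 4h_1,\ 2h_3,\ 2h_3 + 2h_2,\ 2h_3 + 4h_1,\ 2h_3 + 2h_2 + 4h_1}$ — each a positive integer at most $8H$. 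Chaining the three bounds (taking a square root at each of the first two stages) shows the left side of \eqref{sq3AP 8-term conclusion} is $\ll H^{-1/2} + H^{-1/4} + H^{-1/8} + T_3^{1/8} \ll H^{-1/8} + T_3^{1/8}$, where $T_3$ is the seven-term $f_2$-average in \eqref{sq3AP 8-term conclusion}; and the $a_j$ are pairwise distinct because inspecting the $\binom 72$ possible coincidences shows each would force either $h_2 = 2h_1$ or some $h_3$ in $\set{h_2,\ h_2 - 2h_1,\ 2h_1,\ 2h_1 - h_2,\ h_2 + 2h_1}$, all ruled out by the choices of $\mathcal{H}_1$ above. (When $H$ is below an absolute constant the estimate is trivial, since the left side is always $\leq 1$.)

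The main obstacle — the one part that is not routine bookkeeping on top of Lemma \ref{linearisation step} — is choosing the reference polynomials correctly. Referencing against $y^2$ first keeps the $f_2$-block at the top degree, so that the final linear configuration lies in $f_2$ as the statement demands; and referencing against the affine polynomial at the second step is what makes \emph{both} remaining $f_1$-factors lose their $y$-dependence simultaneously. That last point is essential: Lemma \ref{linearisation step} can set aside only a single function per application, so the third step must begin with one leftover function of $x$, not two. Everything else — the precise form of the exclusion sets $\mathcal{H}_1$ (needed only to keep the $a_j$ distinct), the harmless shifts in the supports of the functions, and the coefficient arithmetic — is mechanical, handled just as the ``$\tilde f$'' convention of \S\ref{structure} anticipates. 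For a general configuration $x, x + c_1 y^k, \dots, x + c_n y^k$ one cannot hand-pick the references, and \S\ref{linearisation section} instead runs an abstract PET-style induction, at the cost of an uncontrolled number of differencing steps and hence no explicit value for the exponent.
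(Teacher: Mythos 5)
Your proposal is correct and follows essentially the same route as the paper: three applications of Lemma \ref{linearisation step} with reference polynomials $y^2$, then the affine $2h_1y+h_1^2$ (collapsing both $f_1$-factors at once), then one of the surviving quadratics, with exclusion sets $\mathcal{H}$ chosen exactly as the paper's $\mathcal{H}_2=\set{0,2h_1}$ and $\mathcal{H}_3=\set{0,2h_1,h_2,2h_1\pm h_2, h_2-2h_1, 2h_1+h_2}$ to force the $a_i$ distinct. The resulting coefficients and bounds match the paper's.
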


\begin{notation}
To avoid lengthy expressions, we write $\tilde{f}$ for a function of the form $x \mapsto f(x+b)$ for some integer $b$.  Different occurrences of $\tilde{f}$ in the same equation may refer to different values of $b$, but no confusion should arise.   
\end{notation}

\begin{proof}
Our assumption on the support of $f_i$ ensures that $f_0(x) f_1(x+y^2) f_2(x+2y^2) \neq 0$ only when $y \in[ \sqrt{N}]$.  Write $I$ for this interval, and $M$ for the number of integers it contains. By Lemma \ref{linearisation step} with $\mathcal{H}_1 = \set{0}$ there exists an integer $1 \leq h_1 \leq H$ and an interval $I_1 \subset I$ satisfying
\begin{multline}\label{3AP first differencing}
 \abs{ \frac{1}{N M}  \sum_x \sum_{y \in I} f_0(x) f_1(x+y^2) f_2(x+2y^2)} \ll 
 \frac{1}{H^{1/2}} + \\ \brac{\frac{1}{N M} \sum_x \sum_{y \in I_1} f_1(x) \tilde{f}_1(x+2h_1y_1) \tilde{f}_2(x+y^2) \tilde{f}_2\bigbrac{x+(y+h_1)^2 + 2h_1y}}^{1/2}.
\end{multline}

Re-applying Lemma \ref{linearisation step} with $\mathcal{H}_2 := \set{0}$, we may conclude that there exists an integer $1 \leq h_2 \leq H$ and an interval $I_2 \subset I_1 $ satisfying
$$
 \abs{ \frac{1}{N M}  \sum_x \sum_{y \in I} f_0(x) f_1(x+y^2) f_2(x+2y^2)} \ll 
 \frac{1}{H^{1/2}} +  \frac{1}{H^{1/4}} + \\ \brac{\frac{1}{N M} \sum_x \sum_{y \in I_2} F_1(x,y)}^{1/4}.
$$
where $F_1(x,y)$ is equal to
$$
f_1(x) \tilde{f}_1(x) \tilde{f}_2(x+y^2-2h_1y)\tilde{f}_2\bigbrac{x+(y+h_2)^2-2h_1y}  \tilde{f}_2\bigbrac{x+(y+h_1)^2 }\tilde{f}_2\bigbrac{x+(y+h_1+h_2)^2 }.
$$

A function of the form $x \mapsto f_1(x) \tilde{f}_1(x)$ is 1-bounded, supported on $[N]$ and independent of $y$.  We may therefore remove this function by re-applying Lemma \ref{linearisation step} with  $\mathcal{H}_3 := \set{0}$  to conclude that  there exists an integer $1 \leq h_3 \leq H$ and an interval $I_3 \subset I_2 $ satisfying
\begin{equation}\label{final vdced config}
 \abs{ \frac{1}{N M}  \sum_x \sum_{y \in I} f_0(x) f_1(x+y^2) f_2(x+2y^2)} \ll 
 \frac{1}{H^{1/2}} +  \frac{1}{H^{1/4}}+\frac{1}{H^{1/8}} +  \brac{\frac{1}{N M} \sum_x \sum_{y \in I_3} F_2(x,y)}^{1/8}.
\end{equation}
where $F_2(x,y)$ is equal to
\begin{multline*}
 f_2(x) \tilde{f}_2(x+2h_3y)\tilde{f}_2(x+2h_2y)\tilde{f}_2\bigbrac{x+2(h_2+h_3)y}\tilde{f}_2\bigbrac{x+4h_1y} \times \\ \tilde{f}_2 \bigbrac{x+2(2h_1 + h_3)y }\tilde{f}_2\bigbrac{x+2(h_1 + h_2)y }\tilde{f}_2\bigbrac{x+2(2h_1 + h_2 + h_3)y }.
\end{multline*}

The lemma is complete, provided the following coefficients are all distinct
\begin{equation*}
\begin{split}
2h_1,\quad h_2,\quad h_3,\quad 2h_1+h_2,\quad 2h_1 + h_3,\quad h_2 + h_3,\quad 2h_1 + h_2 + h_3.
\end{split}
\end{equation*}
Unfortunately, we cannot guarantee this with the proof as written.  However, distinctness would follow if instead of taking $\mathcal{H}_2 =  \mathcal{H}_3 = \set{0}$ we took
\begin{align*}
\mathcal{H}_2  := \set{0, 2h_1}, \qquad
\mathcal{H}_3  := \set{0, 2h_1, h_2, 2h_1 + h_2, h_2 - 2h_1, 2h_1- h_2}.
\end{align*}
Lemma \ref{linearisation step} permits this, increasing the absolute constant in \eqref{final vdced config} by a factor of at most $6^{1/8}$.
\end{proof}

 The above argument required three applications of Lemma \ref{linearisation step} in order to linearise the simplest example of a non-linear $k$th power configuration of length greater than two.  In general we require many more applications of the linearisation step, and at each stage of the iteration, it is not immediately obvious that we have reduced the `degree' of the configuration at all.  To see that we have indeed reduced an invariant associated to the configuration, we require the following definition.

\begin{definition}[Degree sequence]
Given polynomials $P_1, \dots, P_n \in \Z[x]$, let $L(P_i)$ denote the leading coefficient of $P_i$ and define 
$$
D_r(P_1, \dots, P_n) := \hash \set{ L(P_i) : \deg P_i = r}.
$$
In words, $D_r(\vP)$ is the number of of distinct leading coefficients occurring amongst the degree $r$ polynomials in $\vP$.  Let us define the \emph{degree sequence} of $\vP = (P_1, \dots, P_n)$ by
\begin{equation*}\label{degree sequence}
D(\vP) := (D_1(\vP), D_2(\vP), D_3(\vP), \dots ).
\end{equation*}
\end{definition}

\begin{definition}[Colex order]
We order degree sequences according to the colexicographical ordering, so that $D(\vP) \prec D(\vQ)$ if there exists $r \in \N$ such that $D_r(\vP) < D_r(\vQ)$ and for all $s > r$ we have $D_s(\vP) = D_s(\vQ)$.  
\end{definition}

\begin{lemma} Let $\mathcal{S}$ denote the set of sequences $(m_i)_{i \in \N}$ of non-negative integers with all but finitely many entries equal to zero.  Then colex induces a well-ordering on $\mathcal{S}$.  In particular, if $P(\vm)$ is a proposition defined on $\mathcal{S}$ satisfying
$$
\sqbrac{\brac{\forall \vm' \prec \vm} P(\vm')} \implies P(\vm),
$$
then $P(\vm)$ is true for all $\vm\in \mathcal{S}$.
\end{lemma}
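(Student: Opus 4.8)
The plan is to deduce the statement in three stages: first I would verify that $\prec$ is a strict total order on $\mathcal{S}$; then show that it admits no infinite strictly descending chain; and finally extract both the well-ordering property and the stated induction principle from these two facts by the usual arguments.

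Checking that $\prec$ is a strict total order should be routine. Irreflexivity is clear, since $\vm \prec \vm$ would demand $m_r < m_r$ for some $r$. For transitivity, if $r'$ witnesses $\vm'' \prec \vm'$ and $r$ witnesses $\vm' \prec \vm$, then — splitting into the cases $r' < r$, $r' = r$, $r' > r$ — one checks that $\max(r,r')$ witnesses $\vm'' \prec \vm$. Totality is the only place where the finite-support hypothesis is used: if $\vm \neq \vm'$, then $\{i : m_i \neq m'_i\}$ is nonempty and finite, hence has a largest element $r$, and the sign of $m_r - m'_r$ determines which of $\vm \prec \vm'$, $\vm' \prec \vm$ holds.

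The substantive step is the absence of infinite descending chains, and the key elementary observation is this: if $\vm' \prec \vm$ with witness $r$, then $m_r \geq 1$, so $r$ lies in the support of $\vm$, and since $\vm'$ agrees with $\vm$ in all coordinates above $r$, the maximal support index $d(\vm) := \max\{i : m_i \neq 0\}$ (with $d$ of the zero sequence set to $0$) satisfies $d(\vm') \leq d(\vm)$. Thus along any chain $\vm^{(1)} \succ \vm^{(2)} \succ \cdots$ the integers $d(\vm^{(j)})$ are non-increasing, hence eventually equal to some fixed $D$; passing to the tail, I may assume every $\vm^{(j)}$ is supported on $\{1,\dots,D\}$. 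It then suffices to prove, by induction on $D \geq 0$, that colex has no infinite descending chain among sequences supported on $\{1,\dots,D\}$. The case $D=0$ is trivial. For the inductive step, note that within such a chain the $D$-th coordinates $m^{(j)}_D$ are non-increasing — a step with witness $r_j$ has $r_j \leq d(\vm^{(j)}) \leq D$, and either $r_j = D$, which strictly decreases the $D$-th coordinate, or $r_j < D$, which leaves it unchanged — so they are eventually constant, after which every witness satisfies $r_j \leq D-1$. Projecting the corresponding tail onto the first $D-1$ coordinates then yields an infinite strictly descending colex chain on sequences supported on $\{1,\dots,D-1\}$, contradicting the inductive hypothesis. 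This descending-chain argument, and within it the fact that a witness index never exceeds the maximal support index, is the part I expect to require the most care; everything else is bookkeeping.

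Finally, a strict total order with no infinite descending chain is automatically a well-ordering: were some nonempty $T \subseteq \mathcal{S}$ to have no least element, one could pick $\vm^{(1)} \in T$, then $\vm^{(2)} \in T$ with $\vm^{(2)} \prec \vm^{(1)}$ since $\vm^{(1)}$ is not least, and iterate to build a forbidden infinite descending chain. The induction principle then follows in the standard way: given $P$ satisfying the stated hypothesis, the set $C := \{\vm : \neg P(\vm)\}$, if nonempty, has a least element $\vm_0$; but then $P(\vm')$ holds for every $\vm' \prec \vm_0$, so the hypothesis forces $P(\vm_0)$, contradicting $\vm_0 \in C$. Hence $C = \emptyset$, which is the claim.
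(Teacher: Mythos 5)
Your proof is correct, but it takes a genuinely different route from the paper. You establish that $\prec$ is a strict total order, prove the descending chain condition (by noting that the maximal support index is non-increasing along a descending chain, hence eventually constant equal to some $D$, and then inducting on $D$ with the coordinatewise ``eventually constant, then project'' argument), and finally convert DCC into the well-ordering property and the induction principle by the standard arguments. The paper instead constructs the least element of an arbitrary non-empty $\mathcal{F} \subset \mathcal{S}$ directly and greedily: starting from the coordinates above the support of some fixed element of $\mathcal{F}$ (all forced to be $0$), it works downward, at each stage taking $m_{k-1}^*$ to be the minimum of the $(k-1)$-st coordinates of those elements of $\mathcal{F}$ already agreeing with $\vm^*$ from position $k$ onwards, and verifies that the resulting $\vm^*$ lies in $\mathcal{F}$ and is minimal. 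The paper's construction produces the minimum explicitly in finitely many steps and avoids any appeal to dependent choice, which your passage from ``no least element'' to ``infinite descending chain'' implicitly uses; your argument, on the other hand, isolates the reusable fact that colex on finitely supported sequences satisfies DCC because it is built from finite products of well-orders, which is arguably the more transparent structural explanation. Both proofs are complete and adequate for the application in the paper, where only the induction principle is used.
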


\begin{proof}  We leave the reader to check that $\preceq$ is transitive, anti-symmetric and total.  We show that every non-empty subset of $\mathcal{S}$ has a least element.

Let $\mathcal{F}$ be a non-empty subset of $\mathcal{S}$.  We construct a sequence $(m_l^*)_{l \in \N}$ such that for each $k \in \N$ the set 
$$
\mathcal{F}_k := \set{ \vm \in \mathcal{F} : m_l = m_l^* \text{ for all } l \geq k}
$$
is non-empty and for any $\vm \in \mathcal{F}$ we have 
\begin{equation}\label{prec}
\begin{split}
(m_l^*)_{l \geq k} \preceq (m_l)_{l \geq k}.
\end{split}
\end{equation}
It follows that $ \vm^*$ is a least element of $\mathcal{F}$. 

Since $\mathcal{F}$ is non-empty, there exists $\vm \in \mathcal{F}$.  Write $k_0$ for the minimum index satisfying $m_l = 0$ for all $l \geq k_0$.  Then we take $m_l^*:= 0$ for all $l \geq k_0$. 

Suppose we have constructed $(m_l^*)_{l \geq k}$ with the required properties and $k > 1$.  Writing $\pi_{k-1}$ for the projection onto the $(k-1)$ coordinate, $\pi_{k-1}(\mathcal{F}_{k})$ is a non-empty set of non-negative integers, hence contains a least element $m_{k-1}^*$.     

Letting $\vm \in \mathcal{F}$, we wish to check that
\begin{equation}\label{prec2}
\begin{split}
(m_l^*)_{l \geq k-1} \preceq (m_l)_{l \geq k-1}.
\end{split}
\end{equation}
Since \eqref{prec} holds, we are done if the inequality in \eqref{prec} is strict.  We may therefore assume that
$$
(m_l^*)_{l \geq k} = (m_l)_{l \geq k}.
$$
The inequality \eqref{prec2} now follows since $m_{k-1} \in \pi_{k-1}(\mathcal{F}_k) $, and therefore $m_{k-1}^* \leq m_{k-1}$.
\end{proof}

For the next two lemmas we assume that $f_0, f_1, \dots, f_n : \Z \to [-1, 1]$ are 1-bounded functions supported on $[N]$, that $I$ is an interval of at most $M$ integers, and that $P_1, \dots, P_n \in \Z[x]$ are polynomials of height at most $H$, maximal degree $k $ and such that $0, P_1, \dots, P_n$ have distinct non-constant parts.

\begin{lemma}[Degree sequence inductive step]\label{degree sequence induction}
Suppose that $k > 1$.  Then for any $H_1 \leq M$ there exists an interval $I' \subset I$ along with 1-bounded functions $g_0, \dots, g_{n'}$ supported on $[N]$ and polynomials $Q_1, \dots, Q_{n'}$ with  $D(\vQ) \prec D(\vP)$
 such that
\begin{multline}\label{induction conclusion}
 \Bigabs{ \recip{NM}\sum_{x} \sum_{y \in I} f_0(x)f_1\bigbrac{x+P_1(y)} \dotsm f_n\bigbrac{x+P_n(y)}}\ll \\ nH_1^{-1/2} + 
\Bigabs{\recip{NM}\sum_x \sum_{y \in I'} g_0(x) g_1\bigbrac{x+Q_1(y)} \dotsm g_{n'}\bigbrac{x+Q_{n'}(y)} }^{1/2}.
\end{multline}
Moreover, we can ensure that 
\begin{itemize}
\item $g_{n'} = f_n$,
\item $n' \leq 2n$,
\item $0, Q_1, \dots, Q_{n'}$ have distinct non-constant parts,  
\item $Q_1, \dots, Q_{n'}$ have height at most $ H(4H_1)^{k}$,
\item writing $t$ for the smallest degree such that $D_t(\vP) >0$, we have
\begin{equation}\label{colex bound}
D(\vQ) = \Bigbrac{i_1, \dots, i_{t-1}, D_t(\vP)-1, D_{t+1}(\vP), D_{t+2}(\vP), \dots}
\end{equation}
for some  $i_1 + \dots + i_{t-1} \leq 2n $.
\end{itemize}
\end{lemma}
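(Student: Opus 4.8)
The plan is to derive \eqref{induction conclusion} from a single application of the linearisation step, Lemma~\ref{linearisation step}, differencing by a polynomial of minimal degree and then checking that this reduces the degree sequence exactly as prescribed by \eqref{colex bound}. Since $0,P_1,\dots,P_n$ have distinct non-constant parts, no $P_i$ is constant, so the least degree $t$ with $D_t(\vP)>0$ satisfies $t\geq 1$ and every $P_i$ has degree $\geq t$. I would fix an index $j$ with $\deg P_j=t$, taking $j\neq n$ whenever the minimal degree is attained by some $P_i$ with $i\neq n$, and then build the ``bad set'' $\mathcal H_1$ out of $0$ together with: for each pair $(i,i')$, the at most one $h$ for which $P_i(y+h)-P_{i'}(y)$ is constant in $y$; and (in the case $j=n$) the at most one $h$ for which $P_n(y+h)-P_n(y)$ has degree below $t-1$. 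Each such condition, after equating the top non-constant coefficient, is a linear equation in $h$ with nonzero leading coefficient and so has at most one solution, whence $|\mathcal H_1|\ll n^2$.

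Next I would apply Lemma~\ref{linearisation step} with this $\mathcal H_1$, with the tuple reordered so that $(f_j,P_j)$ plays the role of $(f_1,P_1)$ (which only permutes the factors in the conclusion), and with $H:=H_1\leq M$. This yields $h\in[H_1]\setminus\mathcal H_1$ and the interval $I':=I\cap(I-h)\subseteq I$ for which the left-hand side of \eqref{induction conclusion} is at most a constant multiple of
\[
nH_1^{-1/2}+\abs{\recip{NM}\sum_x\sum_{y\in I'}\prod_{\substack{1\leq i\leq n\\ \omega\in\set{0,1}}}f_i\bigbrac{x+P_i(y+\omega h)-P_j(y)}}^{1/2}.
\]
A factor here is independent of $y$ exactly when $P_i(y+\omega h)-P_j(y)$ is a constant polynomial, which, because $h\notin\mathcal H_1$, happens only for $(i,\omega)=(j,0)$. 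I would take $g_0$ to be the product of the $y$-independent factors (a $1$-bounded function supported on $[N]$) and let $g_1\bigbrac{x+Q_1(y)},\dots,g_{n'}\bigbrac{x+Q_{n'}(y)}$ enumerate the rest, with $Q_i:=P_i(y+\omega h)-P_j(y)$ the corresponding non-constant polynomial; since $j\neq n$ the factor carrying $f_n$ is among these and may be taken last, so $g_{n'}=f_n$, and plainly $n'\leq 2n-1<2n$. Expanding $P_i(y+h)$ binomially (the $y^m$-coefficient is $\sum_{l\geq m}c_l\binom{l}{m}h^{l-m}$ with $|c_l|\leq H$ and $h\leq H_1$) bounds the coefficients of each $Q_i$ by $H(k+1)2^kH_1^k+H\leq H(4H_1)^k$, using $k\geq 2$.

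It then remains to check distinctness of non-constant parts and the degree sequence. Two retained polynomials $P_i(y+\omega h)-P_j(y)$, $P_{i'}(y+\omega'h)-P_j(y)$ share a non-constant part iff $P_i(y+\omega h)-P_{i'}(y+\omega'h)$ is constant; as $P\mapsto P(\cdot+h)$ preserves distinctness of non-constant parts and $h\notin\mathcal H_1$ rules out the mixed case $\omega\neq\omega'$, this forces $i=i'$, $\omega=\omega'$; since no retained $Q_i$ is constant, none clashes with $0$ either. For the degree sequence: subtracting $P_j$ (degree $t$) from $P_i(y+\omega h)$ leaves the leading term unchanged whenever $\deg P_i>t$, so for $s>t$ the degree-$s$ factors are exactly $\set{P_i(y+\omega h)-P_j(y):\deg P_i=s}$, with leading-coefficient set $\set{L(P_i):\deg P_i=s}$, giving $D_s(\vQ)=D_s(\vP)$. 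A factor has degree exactly $t$ iff $\deg P_i=t$ and $L(P_i)\neq L(P_j)$, in which case its leading coefficient is $L(P_i)-L(P_j)$; since $c\mapsto c-L(P_j)$ is injective this yields $D_t(\vQ)=D_t(\vP)-1$. The entries of $D(\vQ)$ below degree $t$ are unconstrained but sum to at most $n'\leq 2n$; together with $D_1(\vP)=\dots=D_{t-1}(\vP)=0$ this gives both \eqref{colex bound} and $D(\vQ)\prec D(\vP)$.

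The main obstacle is precisely this bookkeeping --- selecting $\mathcal H_1$ so that the transformed polynomials keep distinct non-constant parts and so that the factor carrying $f_n$ remains genuinely non-linear in $y$, which is what forces one to difference by a minimal-degree $P_j$ and to prefer $j\neq n$. The one configuration this argument does not handle directly is the degenerate case $t=1$ with $P_n$ the unique polynomial of minimal degree, so that $j=n$ is forced and $P_n(y+h)-P_n(y)$ is constant; this does not occur in the applications of the lemma, where $f_n$ remains attached to a polynomial of the maximal degree $k\geq 2$ (so that $P_n$ of minimal degree would force $t=k\geq 2$), and may be excluded at the outset.
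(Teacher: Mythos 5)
Your overall strategy --- a single application of Lemma~\ref{linearisation step}, differencing by a polynomial $P_j$ of minimal degree $t$ and excluding a set $\mathcal{H}_1$ of $O(n^2)$ bad shifts --- is the same as the paper's, but your treatment of the \emph{linear} polynomials among the $P_i$ has a genuine gap. You retain both factors $f_i\bigl(x+P_i(y)-P_j(y)\bigr)$ and $f_i\bigl(x+P_i(y+h)-P_j(y)\bigr)$ for every $i$, and argue that the resulting polynomials have distinct non-constant parts because $h\notin\mathcal{H}_1$. For the diagonal pair $i=i'$ with $\omega\neq\omega'$ this requires $P_i(y+h)-P_i(y)$ to be non-constant; but when $\deg P_i=1$ this difference is constant for \emph{every} $h$, so your claim that each bad condition determines at most one $h$ is false, and no choice of $h$ can restore distinctness: the two retained polynomials $P_i-P_j$ and $P_i(\cdot+h)-P_j$ always share the same non-constant part. (Equivalently, if you did place all such $h$ in $\mathcal{H}_1$, you would have $[H_1]\subset\mathcal{H}_1$ and Lemma~\ref{linearisation step} would give nothing.) This case cannot be waved away: even starting from the pure configuration $x+c_iy^k$, after one application the new tuple $\vQ$ contains polynomials of degree below $t$ (the entries $i_1,\dots,i_{t-1}$), in particular linear ones, and the recursion in Lemma~\ref{full degree sequence induction} feeds these back into the present lemma. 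The paper's fix is to treat linear $P_i$ ($i<l$ after reordering) differently: the two occurrences of $f_i$ are merged into the single $1$-bounded function $\Delta_{a_i}f_i$ with $a_i=P_i(h)$, evaluated at $x+(P_i-P_1)(y)$, so each linear polynomial contributes \emph{one} $Q$ rather than two. This is what preserves the distinctness invariant (and incidentally keeps $n'\leq 2n$); only the polynomials of degree $\geq 2$ get duplicated.

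A secondary point: the lemma asserts $g_{n'}=f_n$ with no hypothesis on $\deg P_n$, and you cannot simply ``exclude at the outset'' the case where $P_n$ has minimal degree, because the recursion does not preserve the property that the polynomial attached to $f_n$ has maximal degree (for instance when all $P_i$ have degree $k$ and $L(P_n)=L(P_1)$, the new polynomial attached to $f_n$ drops in degree). The paper handles this \emph{inside} the proof by the change of variables $x\mapsto x-P_j(y)$, which swaps the roles of $f_0$ and $f_j$ and costs only a factor of $2$ in the height; some such device is needed for the lemma as stated. The remainder of your argument --- the height bound, and the computation showing $D_s(\vQ)=D_s(\vP)$ for $s>t$ and $D_t(\vQ)=D_t(\vP)-1$ --- matches the paper once the linear factors are handled correctly.
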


\begin{proof}
At the cost of increasing the height $H$ by a factor of 2, we may assume that the polynomial $P_n$ occurring in the argument of the function $f_n$ has maximal degree $k > 1$.  To see why this is so, suppose that the maximal index $j$ with $\deg P_j = k$ satisfies $j < n$.  Then performing the change of variables $x \mapsto x-P_j(y)$ results in a configuration $\tilde{\vP}$ of the required form satisfying 
$$
\sum_{x} \sum_{y \in I} f_0(x)f_1\bigbrac{x+P_1(y)} \dotsm f_n\bigbrac{x+P_n(y)}\\ = \sum_{x} \sum_{y \in I} f_j(x)f_1\bigbrac{x+\tilde{P}_1(y)} \dotsm f_0\bigbrac{x + \tilde{P}_j(y)}\dotsm f_n\bigbrac{x+\tilde{P}_n(y)}
$$
 Moreover, $\tilde{\vP}$ has height at most $2H$ and $0, \tilde{P}_1, \dots, \tilde{P}_n$ have distinct non-constant parts.  

Given this assumption, 
let us re-arrange the remaining indices with respect to the order of $\deg P_i$, so that there exists an index $l \leq n$ with
\begin{equation}\label{post reordering}
\deg P_i =1 \iff i < l \qquad \text{and} \qquad \deg P_l = \min\set{\deg P_i :i \geq l}.
\end{equation}

\begin{claim}
There are at most $n^2$ choices of $h$ for which the following polynomials have indistinct non-constant parts
\begin{equation}\label{distinct polynomials}
P_1(y), \dots, P_n(y), P_l(y+h), \dots, P_n(y+h)
\end{equation}
\end{claim}
To establish the claim, let us suppose that $h$ is such that two of the polynomials in the list \eqref{distinct polynomials} have the same non-constant part.   Since the polynomials $P_1, \dots, P_n$ have distinct non-constant parts, the only possibility is that $P_i(y+h) - P_j(y)$ is constant for some $l \leq i \leq n$ and $1 \leq j \leq n$.  Let $P_i(y) = a_d y^d + a_{d-1}y^{d-1} + \dots$ with $a_d \neq 0$.  Then since $d > 1$ we have
$$
P_i(y+h) = a_dy^d + (da_d h + a_{d-1}) y^{d-1} + \dots
$$
The expression $d a_d h + a_{d-1}$ must equal  the coefficient of $y^{d-1}$ in $P_j$, and this completely determines $h$.  Since there are at most $n$ choices for $P_j$ and at most $n$ choices for $P_i$ the claim follows.

Let $\mathcal{H}_1$ denote the set of $h$ for which two of the polynomials in \eqref{distinct polynomials} have the same non-constant part.  Notice that $\mathcal{H}_1$ contains $0$.  Applying Lemma \ref{linearisation step}, we deduce that there exists $h \in [H_1] \setminus \mathcal{H}_1$ and an interval $I' \subset I$ such that the left-hand side of \eqref{induction conclusion} is of order at most
\begin{equation}\label{first app of lin}
nH_1^{-1/2} +  \Biggabs{\recip{NM} 
\sum_x \sum_{y \in I'}  \prod_{\substack{1 \leq i \leq n\\ \omega \in \set{0,1}}}f_i\bigbrac{x+P_i(y + \omega h) - P_1(y)}}^{1/2} .
\end{equation}

Using the notation \eqref{difference operator} we see that for each $i < l$ there exists an integer $a_i = P_i(h)$ such that
$$
f_i\bigbrac{x+P_i(y ) - P_1(y)} f_i\bigbrac{x+P_i(y + h) - P_1(y)} = \Delta_{a_i}f_i\bigbrac{x +(P_i-P_1)(y)}.
$$
For such values of $i$ let us write $g_{i-1} := \Delta_{a_i} f_i$ and $Q_{i-1} := P_i - P_1$.  For the remaining indices, set
\begin{itemize}
\item $g_{l+2i-1} = g_{l + 2i}    := f_{l+i}$, 
\item $Q_{l+2i-1}(y) := P_{l+i}(y)- P_1(y)$,
\item $Q_{l+2i}(y) := P_{l+i}(y+h) - P_1(y)$;
\end{itemize}
where in each case $i$ ranges over $0 \leq i \leq n-l$.
Then one can check that
$$
g_0(x) g_1\bigbrac{x + Q_1(y)}\dotsm g_{n'}\bigbrac{x + Q_{n'}(y)} = \\  \prod_{\substack{1 \leq i \leq n\\ \omega \in \set{0,1}}}f_i\bigbrac{x+P_i(y + \omega h) - P_1(y)},
$$ which yields \eqref{induction conclusion} with $n' = 2n - l \leq 2n$.

From our claim we see that $0, Q_1, \dotsm, Q_{n'}$ have distinct non-constant parts, since adding $P_1$ to each polynomial in this sequence gives the sequence \eqref{distinct polynomials}.  Also $g_{n'} = g_{l + 2(n-l)} = f_n =  f_A$.  From a crude estimate using the binomial theorem, one can check that the height of each $Q_i$ is at most $\trecip{2}H(4H_1)^{k}$.

It remains to show that $D(\vQ)$ has the form given in \eqref{colex bound}, and consequently $D(\vQ) \prec D(\vP)$.  From \eqref{post reordering} we have $t =\deg P_1$.  Hence if $\deg P_i > t$ then for either choice of $\omega \in \set{0,1}$, the polynomial $P_i(y+\omega h) - P_1(y)$ has the same leading term as $P_i$.  It follows that for $s > t$ we have $D_s(\vQ) = D_s(\vP)$.  Let $\set{a_1, \dots, a_r}$ denote the set of leading coefficients which appear in some $P_i$ with $\deg P_i = t$.  We may assume that $a_1$ is the leading coefficient of $P_1$.  Then the set of leading coefficients occurring amongst those $Q_i$ with $\deg Q_i  = t$ is equal to $\set{a_2 - a_1, \dots, a_r - a_1}$, which has cardinality one less than $\set{a_1, \dots, a_r}$.  Moreover, since there are at most $2n$ polynomials $Q_i$, the number of $Q_i$ with $\deg Q_i < t$ is also at most $2n$.  This leads to the bound for $i_1 + \dots + i_{t-1}$ claimed in the theorem.
\end{proof}

\begin{lemma}[Full linearisation]\label{full degree sequence induction}
Writing $\vm := D(\vP)$, there exist positive integers $R = R(n, \vm)$, $r 
\leq R$ and $d 
 \leq 2^rn$ such that for any $H_1 \leq M$ there exists an interval $I' \subset I$ along with 1-bounded functions $g_0, \dots, g_{d}$ supported on $[N]$ such that $g_d = f_n$ and
\begin{multline}\label{full induction conclusion}
\Bigabs{\recip{NM} \sum_x \sum_{y \in I} f_0(x) f_1\bigbrac{x+P_1(y)} \dotsm f_n\bigbrac{x+P_n(y)}} \ll nH_1^{-1/2^r} +\\ \Bigabs{\recip{NM}\sum_x \sum_{y \in I'} g_0(x)g_1(x+a_1y+b_1)\dotsm g_d(x+a_d y +b_d) }^{1/2^r},
\end{multline}
for some integers $a_i,b_i$ with $a_i$ distinct, non-zero, and of magnitude at most $H(4H_1)^{rk}$.  
\end{lemma}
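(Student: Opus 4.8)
The plan is to prove the lemma by strong induction on the degree sequence $\vm = D(\vP)$ in the colex order, each step of the induction being one application of Lemma \ref{degree sequence induction} (the degree-sequence inductive step). \textbf{Base case:} if the maximal degree $k$ equals $1$, then the hypothesis that $0, P_1, \dots, P_n$ have distinct non-constant parts forces $P_i(y) = a_i y + b_i$ with the $a_i$ distinct, non-zero and of magnitude $\abs{a_i} \leq H \leq H(4H_1)^{k}$; taking $r = 1$, $R = 1$, $d = n \leq 2^{1}n$ and $g_i = f_i$, the right-hand side of \eqref{full induction conclusion} becomes $nH_1^{-1/2}$ plus the square root of the left-hand quantity, and since that quantity is $1$-bounded it does not exceed its own square root, so \eqref{full induction conclusion} holds.

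\textbf{Inductive step:} suppose $k > 1$ and the lemma holds for every configuration with degree sequence $\prec \vm$. Given $H_1 \leq M$, apply Lemma \ref{degree sequence induction} with this $H_1$: it produces an interval $I' \subset I$, $1$-bounded functions $g_0, \dots, g_{n'}$ supported on $[N]$ with $g_{n'} = f_n$ and $n' \leq 2n$, and polynomials $Q_1, \dots, Q_{n'}$ of height at most $H(4H_1)^{k}$ such that $0, Q_1, \dots, Q_{n'}$ have distinct non-constant parts, $D(\vQ) \prec \vm$ is of the restricted shape \eqref{colex bound}, and \eqref{induction conclusion} holds. Because $I'$ still has at most $M$ integers, this new configuration satisfies all the standing hypotheses (with $H$ replaced by $H(4H_1)^{k}$ and $n$ by $n'$), so the inductive hypothesis applies and yields $r' \leq R'(n', D(\vQ))$, a length $d \leq 2^{r'}n'$, a further interval, $1$-bounded $\tilde g_0, \dots, \tilde g_d$ with $\tilde g_d = g_{n'} = f_n$, and integers $a_i, b_i$ with the $a_i$ distinct, non-zero and of magnitude at most $H(4H_1)^{k}(4H_1)^{r'k}$, bounding the right-hand sum of \eqref{induction conclusion}. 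Setting $r := r'+1$ and substituting, all the bookkeeping composes: $d \leq 2^{r'}n' \leq 2^{r'+1}n = 2^{r}n$; the magnitude bound becomes $H(4H_1)^{(r'+1)k} = H(4H_1)^{rk}$; the exponent becomes $(1/2)(1/2^{r'}) = 1/2^{r}$; and, using $(n')^{1/2} \leq n' \leq 2n$ together with $H_1 \geq 1$ and $r \geq 1$, the error term is $\ll nH_1^{-1/2} + (n'H_1^{-1/2^{r'}})^{1/2} \ll nH_1^{-1/2^{r}}$ (the implied constant remaining absolute, as one checks by inducting on it simultaneously). The distinctness and non-vanishing of the final coefficients $a_i$ is precisely the invariant ``$0, Q_1, \dots$ have distinct non-constant parts'', carried all the way down to a configuration whose polynomials are all linear.

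\textbf{The main obstacle} is to see that $r$, the number of iterations, is bounded by a quantity $R(n, \vm)$ depending only on $n$ and $\vm$, even though the length may double at every step, so that the recursion has a first coordinate which grows. This is saved by \eqref{colex bound}: the next degree sequence $D(\vQ)$ lies in a finite, explicitly bounded family of sequences, each strictly below $\vm$ in colex. One may therefore define $R(n, \vm) := 1$ when $\vm$ has maximal degree at most $1$, and otherwise $R(n, \vm) := 1 + \max R(n', \vm')$, the maximum over the finitely many admissible pairs with $n' \leq 2n$ and $\vm'$ of the form \eqref{colex bound}; since each such $\vm' \prec \vm$, this is a legitimate definition by transfinite recursion along the colex well-ordering established earlier, the growing first coordinate being merely carried along, and the same well-ordering makes the induction terminate. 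The remaining verifications — that the intermediate intervals, functions, heights and the standing hypotheses line up at each stage, and that the numerical exponents multiply to $1/2^{r}$ — are routine, and were already illustrated for the configuration $x, x+y^2, x+2y^2$ in the proof of Lemma \ref{linearisation for sqAPs}.
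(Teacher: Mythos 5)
Your proof is correct and follows essentially the same route as the paper's: colex induction with one application of Lemma \ref{degree sequence induction} per step, setting $r = r'+1$, tracking the height as $H(4H_1)^{k}(4H_1)^{r'k} \leq H(4H_1)^{rk}$, controlling the implied constant by inducting on it simultaneously (the paper fixes it as $8C^2$), and defining $R(n,\vm)$ recursively over the finite family of admissible $(n',\vm')$ permitted by \eqref{colex bound}. Your explicit treatment of the base case (using that the normalised sum is $1$-bounded, hence dominated by its own square root) is a detail the paper leaves implicit, but it is the intended argument.
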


\begin{remark}
It is important for our purposes that whilst the numbers $r$ and $d$ may depend on the coefficients of the configuration $\vP$, we have upper bounds for these quantities which depend solely on $D(\vP)$ and $n$.
\end{remark}

\begin{proof}
We proceed by induction along the colex order of $\vm:= D(\vP)$, proving the inequality \eqref{full induction conclusion} with absolute constant $8C^2$, where $C$ is the absolute constant occurring in \eqref{induction conclusion}.

  If $k = \max_i \deg P_i = 1$ then we are done on taking $r(\vm) = 1$ and $d = n$.  Let us therefore assume that $k := \max_i \deg P_i > 1$ and apply Lemma \ref{degree sequence induction} to conclude the existence of:
\begin{itemize}
\item  an interval $I' \subset I$,
\item 1-bounded functions $g_0, \dots, g_{n'}$ supported on $[N]$ with $n' \leq 2n$ and $g_{n'} = f_n$,
\item  polynomials $0, Q_1, \dots, Q_{n'}$ of height at most $H(4H_1)^{k}$ and distinct non-constant parts,
\end{itemize}
such that together these satisfy the inequality
\begin{multline}
 \Bigabs{ \recip{NM}\sum_{x} \sum_{y \in I} f_0(x)f_1\bigbrac{x+P_1(y)} \dotsm f_n\bigbrac{x+P_n(y)}}\leq CnH_1^{-1/2} + \\
C\Bigabs{\recip{NM}\sum_x \sum_{y \in I'} g_0(x) g_1\bigbrac{x+Q_1(y)} \dotsm g_{n'}\bigbrac{x+Q_{n'}(y)} }^{1/2}.
\end{multline}
Furthermore, writing $t$ for the smallest degree such that $m_t > 0$, we have
\begin{equation*}
\vm' := D(\vQ) = (i_1, \dots, i_{t-1}, m_t - 1, m_{t+1}, \dots ) \prec \vm
\end{equation*}
for some $i_1 + \dots + i_{t-1} \leq 2n$.

Applying the induction hypothesis, we conclude that there exist positive integers $R' = R(n', \vm')$, $r' \leq R'$ and $d \leq 2^{r'}n'$ along with 
\begin{itemize}
\item  an interval $I'' \subset I'$;
\item 1-bounded functions $\tilde{g}_0, \dots, \tilde{g}_{d}$ supported on $[N]$ with $d \leq 2^{r'} n'$ and $\tilde{g}_d = g_{n'}=f_n$;
\item  integers $a_i,b_i$ with $a_1, \dots, a_d$ distinct, non-zero and of magnitude at most $H(4H_1)^k(4H_1)^{r'k}$.
\end{itemize}
Moreover, we have the inequality
\begin{multline}\label{induction step ineq}
\Bigabs{\recip{NM}\sum_x \sum_{y \in I'} g_0(x) g_1\bigbrac{x+Q_1(y)} \dotsm g_{n'}\bigbrac{x+Q_{n'}(y)} }\leq 8 C^2 n'H_1^{-1/2^{r'}} +\\ 8 C^2\Bigabs{\recip{NM}\sum_x \sum_{y \in I'} \tilde{g}_0(x)\tilde{g}_1(x+a_1y+b_1)\dotsm \tilde{g}_d(x+a_d y +b_d) }^{1/2^{r'}}
\end{multline}

Setting $r = r'+1$, the observation that $n' \leq 2n$ gives $d \leq 2^r n$ and $|a_i| \leq H(4H)^{rk}$.  Furthermore, \eqref{full induction conclusion} follows with the claimed constant from the inequality
\begin{equation*}
\begin{split}
C n H_1^{-1/2} + C\brac{8C^2n' H_1^{-1/2^{r'}}}^{1/2} &\leq (Cn+C(16 C^2 n)^{1/2})H_1^{-1/2^r}\\
& \leq 8C^2 n H_1^{-1/2^r}.
\end{split}
\end{equation*}

It remains to establish the existence of $R(n, \vm)$.  Define $\mathcal{M}(n, \vm)$ to be the set 
$$
 \set{\vm' : m_1' + \dots + m_{t-1}' \leq 2n ,\ m_t' = m_t -1, \ m_j' =m_j \text{ for } j>t}.
$$
Since $t$ is determined by $\vm$, the set $\mathcal{M}(n, \vm)$ is completely determined by $n$ and $\vm$.  By induction along colex, the integer $R(n', \vm')$ exists for each $\vm' \in \mathcal{M}(n, \vm)$ and any valid choice of $n' \leq 2n$, hence the  lemma follows on defining
\begin{equation*}
\begin{split}
R(n, \vm) := 1 + \max_{\substack{n' \leq 2n\\ \vm' \in \mathcal{M}(n , \vm)}} R(n', \vm').
\end{split}
\end{equation*}

\end{proof}

\begin{corollary}[Linearisation for $k$th power configurations]\label{linearisation induction}
Let $f_0$, $\dots$, $f_n : \Z \to [-1, 1]$ be 1-bounded functions supported on $[N]$ and let $c_1,\dots, c_n$ be distinct non-zero integers.  Then there exist integers $r = r(n, k)$ and $d = d(n,k)$ such that for any $H \leq N^{1/k}$ there exists $M \leq N^{1/k}$ for which 
\begin{multline}\label{8-term conclusion}
\Bigabs{N^{-\frac{k+1}{k}} \sum_{x\in \Z} \sum_{y \in \N} f_0(x) f_1\brac{x+c_1y^k} \dotsm f_n\brac{x+c_ny^k}} \ll nH^{-1/2^r} +\\ \Bigabs{N^{-\frac{k+1}{k}}\sum_x \sum_{y \in [M]} g_0(x)g_1(x+a_1y+b_1)\dotsm g_d(x+a_d y +b_d) }^{1/2^r},
\end{multline}
where $g_0, \dots, g_{d}$ are 1-bounded functions supported on $[N]$ with $g_d = f_n$, and $a_i,b_i$ are integers with $a_i$ distinct, non-zero, and of magnitude at most $O_{\vc}\brac{H^{r}}$.
\end{corollary}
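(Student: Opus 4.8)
The plan is to derive Corollary \ref{linearisation induction} by feeding the homogeneous configuration given by $P_i(y):=c_iy^k$, $1\le i\le n$, into Lemma \ref{full degree sequence induction}, after first recognising that the $y$-summation is effectively finite and, at the very end, cosmetically reshaping the output. I assume throughout that $H\ge1$; this entails no loss, for when $H<1$ the left side of \eqref{8-term conclusion} is at most $1$, which already lies below $nH^{-1/2^r}$ whatever positive value of $r$ is eventually chosen. Now, if $f_0(x)f_1(x+c_1y^k)\dotsm f_n(x+c_ny^k)\ne0$ then $x\in[N]$ and $x+c_iy^k\in[N]$ for every $i$, so $|c_1|\,y^k\le N-1<N$, and since $|c_1|\ge1$ this forces $y^k<N$, that is, $y\le\lfloor N^{1/k}\rfloor$. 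Hence $\sum_{y\in\N}$ on the left of \eqref{8-term conclusion} may be replaced by $\sum_{y\in I}$, where $I:=[\lfloor N^{1/k}\rfloor]$ is an interval of at most $M_0:=N^{1/k}$ integers, and the normalisation $N^{-(k+1)/k}$ becomes $(NM_0)^{-1}$, exactly the normalisation appearing in Lemma \ref{full degree sequence induction}.

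The remaining hypotheses of Lemma \ref{full degree sequence induction} hold for $\vP=(P_1,\dots,P_n)$: the $P_i$ have height $\max_i|c_i|=O_{\vc}(1)$ and maximal degree $k$, and $0,P_1,\dots,P_n$ have distinct non-constant parts precisely because the $c_i$ are distinct and non-zero. Crucially, $D(\vP)$ has its only non-zero entry, equal to $n$, in coordinate $k$, so $D(\vP)$ is completely determined by $n$ and $k$; by the remark following Lemma \ref{full degree sequence induction}, the integers $r$ and $d$ it yields are then bounded by quantities depending on $n$ and $k$ alone. Applying the lemma with this $I$, with $M=M_0$, and with its differencing parameter taken equal to the given $H\le N^{1/k}=M$, I obtain an integer $\rho$ (the quantity denoted $r$ in that lemma), a subinterval $I'\subseteq I$, $1$-bounded functions $g_0,\dots,g_d$ supported on $[N]$ with $g_d=f_n$, and integers $b_i$ together with $a_i$ (distinct, non-zero, of magnitude at most $\max_i|c_i|\cdot(4H)^{\rho k}=O_{\vc}(H^{\rho k})$), such that \eqref{full induction conclusion} holds with exponent $1/2^{\rho}$ and error term $nH^{-1/2^{\rho}}$; since no lossy step was used to arrive here, the implied constant is absolute.

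Two cosmetic adjustments now produce \eqref{8-term conclusion}. First, to pass from $y\in I'$ to $y\in[M]$ with $M:=|I'|\le|I|\le N^{1/k}$, I translate the summation variable $y$ by $\min(I')-1$; this replaces each $b_i$ by $b_i+a_i(\min(I')-1)$, still an integer, and leaves the $a_i$ unchanged. Second, to display the exponent $1/2^{r}$ and the bound $O_{\vc}(H^{r})$ on the $a_i$ claimed in \eqref{8-term conclusion}, I set $r:=k\rho$ and keep the same $d$, noting $d\le2^{\rho}n\le2^{r}n$. Since $r\ge\rho$, since $H\ge1$, and since both the left side of \eqref{8-term conclusion} and the normalised average on its right are at most $1$, lowering the exponent from $1/2^{\rho}$ to $1/2^{r}$ only enlarges each term on the right of \eqref{full induction conclusion}, so the inequality persists, while $|a_i|=O_{\vc}(H^{\rho k})=O_{\vc}(H^{r})$. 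Finally, since $D(\vP)$ depends only on $n$ and $k$, so do upper bounds for the resulting $r$ and $d$, as the corollary requires.

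As Lemma \ref{full degree sequence induction} carries the entire combinatorial load, there is no substantive obstacle in this corollary. The two points that reward a little care are the a priori finiteness of the $y$-range — which is precisely what makes the normalisations of the two statements coincide — and the observation that one may freely weaken the exponent from $1/2^{\rho}$ to $1/2^{r}$, thereby absorbing the factor of $k$ into the coefficient bound, using only that the normalised averages in sight are at most $1$.
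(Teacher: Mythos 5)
Your proposal is correct and follows essentially the same route as the paper: apply Lemma \ref{full degree sequence induction} to $P_i(y)=c_iy^k$ after noting that $D(\vP)=(0,\dots,0,n,0,\dots)$ depends only on $n$ and $k$, then replace $r$ by $kr$ to convert the height bound $O_{\vc}(H^{rk})$ into $O_{\vc}(H^{r})$. The additional details you supply (restricting the $y$-range to $[N^{1/k}]$ to match the normalisation, translating $I'$ to an initial segment, and justifying that lowering the exponent to $1/2^{kr}$ only enlarges the right-hand side) are all points the paper leaves implicit.
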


\begin{proof}
The conclusion follows from Lemma \ref{full degree sequence induction} and the observation that for this particular configuration $\vP$ we have
$$
D(\vP) = (0, \dots, 0, n,0, \dots)
$$
where the only non-zero entry occurs in the $k$th place.  

To be more precise, the conclusion follows with a height of $O_{\vc}(H_1^{kr})$ rather than the claimed $O_{\vc}(H_1^{r})$.  However, we may increase $r$ to $kr$ and the conclusion remains valid.
\end{proof}

\section{The localised von Neumann theorem}\label{local von neu section}
In this section we show how a function $g_d$ which has large linear average of the form \eqref{8-term conclusion} also has large $U^d$-norm on many short intervals.  Recall the definition of the Gowers norm (and its localisation) given in \eqref{gowers norm}.

\begin{lemma}[Linear local von Neumann]\label{constrained von Neumann}\label{constrained von Neumann lemma}
Let  $f_0, \dots, f_d : \Z \to [-1,1]$ be functions supported on $[N]$ and let $a_i, b_{i}$ be integers with the $a_i$ distinct and satisfying $0 < |a_i| \leq H$ for all $i$.  Then for any $1 \leq M_1 \leq M$ there exists $M_2$ in the range $HM_1 \leq M_2 \ll HM_1$ 
such that we have the inequality
\begin{equation}\label{dAP assumption}
 \Bigabs{\sum_x \sum_{y \in [M]} f_0(x)f_1(x+a_1y+b_1)\dotsm f_d(x+a_d y +b_d) } \ll_d  NM_1 + 
  H^2M\sum_x \frac{\norm{f_d}_{U^d(x+[M_2])}}{\norm{1}_{U^d(x+[M_2])}}.
\end{equation}
\end{lemma}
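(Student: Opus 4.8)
The plan is to imitate, for arbitrary length, the three-point argument sketched in \S\ref{structure}: first \emph{thicken} the short variable $y$ so that every relevant parameter lies in an interval of length $\asymp M_1$, and then run the generalised von Neumann argument (iterated Cauchy--Schwarz) on the resulting short linear configuration, reading off a $U^d$-norm of $f_d$ localised at scale $\asymp HM_1$.

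\textbf{Thickening.} After shifting the arguments of the $f_i$ we may assume $a_0 = b_0 = 0$. Fix $M_1 \le M$. Since translating $[M]$ by a bounded amount changes $\sum_{y\in[M]}G(y)$ by $O(M_1\norm{G}_\infty)$, averaging over two independent translations $w, w' \in [M_1]$ yields
\[
\sum_{y\in[M]}G(y) = \frac1{M_1^2}\sum_{w,w'\in[M_1]}\sum_{y\in[M]}G(y+w'-w) + O\bigl(M_1\norm{G}_\infty\bigr).
\]
I would apply this with $G(y) := \sum_x\prod_i f_i(x+a_iy+b_i)$, so that $\norm{G}_\infty \le N$, and then maximise over $y\in[M]$, absorbing the now-constant shifts $a_iy$ into the functions via the $\tilde f_i$ notation. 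This reduces the task to bounding $\tfrac{M}{M_1^2}\abs{S} + O(NM_1)$, where
\[
S := \sum_x\sum_{w,w'\in[M_1]}\prod_{i=0}^d\tilde f_i\bigl(x+a_i(w'-w)\bigr)
\]
is a linear configuration in $x$ and in the single combination $v := w'-w$, which ranges over an interval of length $<2M_1$.

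\textbf{Generalised von Neumann for the short configuration.} Next I would apply Cauchy--Schwarz $d$ times to $S$, eliminating $\tilde f_0,\dots,\tilde f_{d-1}$ in turn: at each step one translates $x$ so that the function being eliminated becomes independent of the running increment, applies Cauchy--Schwarz in the remaining variables, and introduces the new increment $h_j$, which ranges over an interval of length $<2M_1$ because $w$ and $w'$ do. As the $a_i$ are distinct, the coefficient $c_j$ attached to $\tilde f_d$ at the $j$-th step -- one of $a_d,\,a_d-a_1,\,\dots,\,a_d-a_{d-1}$ -- is a non-zero integer with $\abs{c_j}\le 2H$. Carrying out the $x$-Cauchy--Schwarz at each step locally, after partitioning $[N]$ into windows of length $M_2 := C_d H M_1$ (harmless, since $\abs{a_i}\le H$ keeps every argument within $O(M_2)$ of a window), one expects to reach
\[
\abs{S}^{2^d} \ll_d (\text{powers of }N, M_1, H)\cdot\sum_x\norm{\tilde f_d}_{U^d(x+[M_2])}^{2^d}.
\]
Here the passage to the \emph{interval}-restricted norm is the delicate point: the increments produced by iterated Cauchy--Schwarz are supported on the dilated intervals $c_j\cdot(-2M_1,2M_1)$, and to recover $\norm{\tilde f_d 1_{x+[M_2]}}_{U^d}$ one writes the resulting Gowers-type sum over a sublattice of increments as an average over its characters, then uses the modulation-invariance of the $U^d$-norm together with the Gowers--Cauchy--Schwarz inequality to strip off the dilation without loss; a boundary estimate absorbs the discrepancy between the windows and their enlargements into the $O_d(NM_1)$ error.

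\textbf{Conclusion and main obstacle.} Finally, re-indexing $x$ (using $\norm{\tilde f_d}_{U^d(x+[M_2])} = \norm{f_d}_{U^d((x+b)+[M_2])}$ for the relevant shift $b$), recalling $\norm{1}_{U^d(x+[M_2])}\asymp M_2^{(d+1)/2^d}$, taking $2^d$-th roots and using $\bigl(\sum_x a_x^{2^d}\bigr)^{1/2^d}\le\sum_x a_x$, one should arrive at \eqref{dAP assumption}, provided the accumulated powers of $N$, $M_1$ and $H$ collapse -- via $M_2\asymp HM_1$ -- into the coefficient $H^2 M$. I expect the main obstacle to be exactly this bookkeeping, coupled with the structural requirement that the $d$ Cauchy--Schwarz steps deliver the \emph{interval}-restricted localised norm at the short scale $\asymp HM_1$ (rather than a global Gowers norm, a variant with unrestricted base point, or a norm localised to a progression of common difference exceeding one): this is where the interplay between the short interval $[M_1]$ carrying $v$ and the dilation by the bounded coefficients $a_i$ has to be controlled carefully.
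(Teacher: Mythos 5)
Your thickening step is the same as the paper's, but immediately afterwards you discard what it was designed to produce. By collapsing the two new parameters into the single combination $v = w'-w$, your sum $S$ becomes (up to a multiplicity weight $r(v)$) $\sum_x\sum_v \prod_i \tilde f_i(x+a_i v)$ with $v$ in an interval of length $<2M_1$ --- which is just the original one-parameter configuration again, with $x$ at scale $N$ and the increment at scale $M_1$. The paper's key move at this point is the opposite one: keep $z_0$ and $z_1$ as \emph{independent} variables, shift $x\mapsto x+a_1z_0$, and observe that the configuration becomes $f_0(x+a_1z_0)\,\tilde f_1(x+a_1z_1)\,\tilde f_2(x+\va_2\cdot\vz)\dotsm \tilde f_d(x+\va_d\cdot\vz)$ with $\va_i=(a_1-a_i,a_i)$, so that $(1,0),(0,1),\va_2,\dots,\va_d$ are pairwise linearly independent (this is where distinctness and non-vanishing of the $a_i$ enter). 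For each fixed $x$ this is a genuine two-dimensional linear system in which \emph{both} parameters range over $[M_1]$ and all functions are supported on intervals of length $O(HM_1)$; the standard generalised von Neumann inequality (Lemma \ref{unconstrained von neumann}, proved by induction on $d$ with a Fourier-analytic base case and a positivity/injectivity argument to pass from the kernel sublattice of increments to all increments) then applies directly and outputs $\norm{g_d}_{U^d}$ for a function $g_d$ supported on an interval of length $O(HM_1)$ --- that is, exactly the interval-localised norm, with no further conversion required.

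Your substitute --- iterated Cauchy--Schwarz on the one-parameter sum, with the $x$-applications localised to windows of length $\asymp HM_1$ --- leaves the genuine difficulty unresolved at precisely the point you flag as ``delicate''. What that iteration produces is a directional box-norm expression $\sum_{h_1,\dots,h_d}\sum_x\Delta_{c_1h_1,\dots,c_dh_d}\tilde f_d(x)$ with each $h_j$ confined to an interval of length $O(M_1)$; this is not termwise nonnegative in $\vh$, so one cannot simply majorise it by the full sum defining $\norm{f_d 1_{x+[M_2]}}_{U^d}^{2^d}$, and the tools you invoke (characters of the sublattice, modulation invariance, Gowers--Cauchy--Schwarz) do not obviously convert it into the interval-localised norm. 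Passing from box norms with increments restricted to short ranges to honest localised $U^d$-norms is essentially the concatenation problem that \S 2 of the paper explains cannot currently be done with acceptable quantitative losses; the two-variable change of variables is the idea that circumvents it, and it is missing from your argument.
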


We deduce this from a standard result in which the common difference $y$ is not constrained to lie in a short interval.

\begin{lemma}\label{unconstrained von neumann}
Let $g_0, g_1, \dots, g_d : \Z \to [-1,1]$ be functions supported on $[-N, N]$ and let $\va_2, \dots, \va_d \in \Z^2$ be such that $(1, 0), (0,1), \va_2, \dots, \va_d$ are pairwise linearly independent.  Then for $d\geq 2$ we have
\begin{equation*}\label{gen von neu}
 \Bigabs{\sum_{z_0, z_1} g_0(z_0)g_1(z_1)g_2(\va_2\cdot\vz) \dotsm g_d(\va_d\cdot\vz) } \ll_d N^{2- \frac{d+1}{2^d}}\norm{g_d}_{U^d}.
\end{equation*}
\end{lemma}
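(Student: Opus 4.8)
The statement is a "generalised von Neumann" bound for a linear form in $d+1$ variables constrained to a two-dimensional lattice $\vz = (z_0, z_1)$, and the natural route is a sequence of $d$ Cauchy--Schwarz applications, exactly as in the classical proof (Green--Tao--Ziegler style) but bookkeeping the linear-algebra genericity hypothesis. First I would normalise: after translating, assume all $g_i$ are supported on $[-N,N]$, and write the sum as $\sum_{z_0,z_1} \prod_{i=0}^d g_i(\vL_i\cdot\vz)$ where $\vL_0 = (1,0)$, $\vL_1 = (0,1)$ and $\vL_i = \va_i$ for $2\le i\le d$, the $d+1$ vectors being pairwise linearly independent. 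The plan is to eliminate the functions $g_0, g_1, \dots, g_{d-1}$ one at a time by Cauchy--Schwarz, at each stage duplicating the remaining variables along a new difference parameter and picking up a factor of $N$ to the appropriate power, until only a multilinear average of $\Delta_{h_1,\dots,h_d} g_d$ survives, which is exactly $\norm{g_d}_{U^d}^{2^d}$ up to the range of the $h_j$ and the Jacobian of the change of variables.

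The key steps, in order: (1) Fix a variable to Cauchy--Schwarz over — say eliminate $g_0$ first by viewing the sum as $\sum_{z_1}(\text{something})(z_1)\cdot\big(\sum_{z_0} g_0(z_0)\,[\text{rest}]\big)$; more efficiently, at the $j$th step I Cauchy--Schwarz in the variable dual to $\vL_{j-1}$, which doubles the other free parameters in a difference $h_j$ and removes $g_{j-1}$ because it no longer depends on $h_j$. (2) Track that after $j$ steps one has an average over $z$ (two-dimensional), over $j$ difference parameters $h_1,\dots,h_j$ each of size $O(N)$, of a product of $2^j$ translates of the functions $g_j, \dots, g_d$; the pairwise linear independence of the $\vL_i$ is precisely what guarantees that at each step the function being eliminated genuinely drops out (its argument is independent of the new $h_j$) and that the surviving arguments of $g_j,\dots,g_d$ remain honest affine-linear forms with the right structure. (3) After $d$ steps, only $g_d$ remains, appearing as $\prod_{\omega\in\{0,1\}^d} g_d(\vL_d\cdot\vz + \omega\cdot(\text{stuff}))$; change variables so that $\vL_d\cdot\vz$ becomes a single variable $x$ and the combinations $c_j h_j$ become new difference variables, turning the inner sum into $\sum_x \Delta_{h_1',\dots,h_d'} g_d(x)$. (4) Extend the ranges of the $h_j'$ to all of $\Z$ (legitimate since $\Delta_{h_1,\dots,h_d}g_d(x)\ge 0$ when summed over $x$ — actually one should be slightly careful: the $U^d$ norm sum is nonnegative, and restricting the $h_j$ to a box only decreases a nonnegative sum, so bounding the restricted sum by the full one is valid). (5) Collect the powers of $N$: each Cauchy--Schwarz step squares the quantity and inserts one factor of roughly $N$ (the length of the $h_j$ range) inside; solving the recursion gives the exponent $2 - \frac{d+1}{2^d}$ on $N$ and the power $2^d$ of $\norm{g_d}_{U^d}$ inside the $2^d$-th root, i.e. the final bound $\ll_d N^{2-(d+1)/2^d}\norm{g_d}_{U^d}$.

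The main obstacle — and the only genuinely non-routine point — is step (2)/(3): keeping the linear algebra straight so that (a) each of $g_0,\dots,g_{d-1}$ is actually eliminable at the right stage, which needs that each $\vL_i$ is linearly independent from the one currently being "differenced out", and (b) after all $d$ differencings the surviving translates of $g_d$ really form a genuine $\Delta_{h_1,\dots,h_d}$ structure in a single variable with a nondegenerate (hence invertible-up-to-bounded-index) change of variables. The pairwise linear independence of $(1,0),(0,1),\va_2,\dots,\va_d$ is exactly the hypothesis that makes all of this go through; I would organise the induction on $d$ so that the inductive step handles one Cauchy--Schwarz and reduces to $d-1$ functions on a configuration of the same shape, which also makes the $N$-exponent recursion transparent. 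The $O_d$-implied constant absorbs the number of boundary terms and the bounded index of the change of variables, so no care about its size is needed.
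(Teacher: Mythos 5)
Your overall strategy --- eliminating $g_0,\dots,g_{d-1}$ by $d$ successive applications of Cauchy--Schwarz in physical space --- is a legitimate alternative in outline to the paper's argument, which instead inducts on $d$: the base case $d=2$ is handled with the Fourier transform and the identity $\norm{g}_{U^2} = \norm{\hat g}_{L^4(\T)}$, and the inductive step removes the single function $g_2$ by Plancherel and Cauchy--Schwarz, reducing to the same configuration with $d-1$ forms, differenced functions, and an outer sum over the rank-one lattice $\set{\vh : \va_2\cdot\vh=0}$. Your exponent bookkeeping (the recursion giving $2-\frac{d+1}{2^d}$) is consistent with the claimed bound.

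However, your step (4) contains a genuine gap. After the $d$ differencings the parameters do not range over boxes but over \emph{dilated sublattices}: the $j$th difference acquired by $g_d$ lies in $(\va_d\cdot\vb_{j})\Z$, where $\vb_{j}$ spans the kernel of the form being eliminated, and these dilation factors are unbounded in terms of $d$. Your justification for passing to the full $U^d$-norm --- that $\sum_x\Delta_{h_1,\dots,h_d}g_d(x)\geq 0$ and that restricting the $\vh$-range only decreases a nonnegative sum --- is false: for fixed $\vh$ this is a signed quantity (already for $d=1$, $g=1_{\set{0}}-1_{\set{1}}$ and $h=1$ give $-1$), and a signed sum restricted to a sublattice need not be majorised by the sum over the whole lattice (for $d=1$ and $g$ of mean zero the full sum vanishes while the $h=0$ term is positive). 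This is exactly where the paper's proof does its real work: it extends the sublattice sum to all of $\Z$ only at the level of the \emph{nonnegative} quantities $\norm{\Delta_h g_d}_{U^{d-1}}$, and then converts back via H\"older, $\bigbrac{\sum_h\norm{\Delta_h g_d}_{U^{d-1}}}^{2^{d-1}}\leq (4N+1)^{2^{d-1}-1}\norm{g_d}_{U^d}^{2^d}$; some such device (or a Gowers--Cauchy--Schwarz with explicit cutoff functions) is unavoidable in your scheme as well. A secondary point you should not gloss over: at the final Cauchy--Schwarz only $g_d$ survives, and its argument $\va_d\cdot\vz$ does not localise $\vz$ along the fibres of the form just eliminated, so unless you retain that function's support indicator inside the second Cauchy--Schwarz factor the resulting sum over $\vz$ diverges.
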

 
 \begin{proof}
 We proceed by induction on $d$.  For $d=2$ we use the Fourier transform, as defined in \eqref{fourier transform}.  If we write $(a_0, a_1)$ for $\va_2$, then orthogonality and H\"older's inequality, together with the fact that $a_0$ and $a_1$ are both non-zero, gives 
 \begin{equation*}
\begin{split}
\Bigabs{\sum_{\vz} g_0(z_0) g_1(z_1) g_2(\va\cdot \vz)} & = \Bigabs{\int_\T \hat{g}_0(a_0\alpha)\hat{g}_1( a_1\alpha) \overline{\hat{g}_2(\alpha)} \intd\alpha}\\
& \leq \norm{g_0}_{L^2}\norm{g_1}_{U^2} \norm{g_2}_{U^2}\\
& \ll N^{5/4} \norm{g_2}_{U^2}.
\end{split}
\end{equation*}
Here we have used the identity $\norm{g_i}_{U^2(\Z)} = \norm{\hat{g}_i}_{L^4(\T)}$.

For the induction step, when $d > 2$, let us again write $(a_0, a_1)$ for $\va_2$  and
$$
G(\vz) := g_0(z_0)g_1(z_1)g_3(\va_3\cdot\vz) \dotsm g_{d}(\va_{d}\cdot\vz).
$$
Then we have
\begin{align*}
 \Bigabs{\sum_{z_0, z_1} g_0(z_0)g_1(z_1)g_2(\va_2\cdot\vz) \dotsm g_d(\va_d\cdot\vz) } &= \Bigabs{\sum_{\vz} G(\vz) g_2(a_0 z_0 + a_1z_1)}\\
& =  \Bigabs{\int_\T \hat{G}(a_0\alpha, a_1\alpha) \overline{\hat{g}_2(\alpha)} \intd\alpha}\\
& \leq \bignorm{\hat{g}_2}_{L^2} \Biggbrac{\int_\T |\hat{G}(a_0\alpha, a_1\alpha)|^2 \intd\alpha}^{1/2}.
\end{align*}
Interpreting the underlying equations, we see that
\begin{align*}
\int_\T |\hat{G}(a_0\alpha, a_1\alpha)|^2 \intd\alpha = \sum_{\va_2\cdot( \vz - \vw) = 0} G(\vz) G(\vw)
 = \sum_{\vh\ :\ \va_2 \cdot \vh = 0}\ \sum_{\vz} G(\vz) G(\vz + \vh).\label{difference count}
\end{align*}

For fixed $\vh$, let us set $\tilde{g}_0 = \Delta_{h_0}g_0$, $\tilde{g}_1 = \Delta_{h_1}g_1$ and $\tilde{g}_i = \Delta_{\va_i\cdot \vh}\, g_i$ for $i \geq 3$.  Then applying the induction hypothesis we have
\begin{align*}
 \sum_{\vz} G(\vz) G(\vz + \vh)&=   \sum_{\vz} \tilde{g}_0(z_0) \tilde{g}_1(z_1) \tilde{g}_3(\va_3 \cdot \vz) \dotsm \tilde{g}_{d}(\va_{d}\cdot \vz)\\
& \ll_d N^{2-d2^{1-d}}  \norm{\Delta_{\va_d\cdot \vh}\, g_d}_{U^{d-1}}.
\end{align*}

Let $c = \hcf(a_0, a_1)$ and set $\vb = (b_0, b_1) := c^{-1} (a_1, -a_0) \in \Z^2$.  Then we know that the set
$$
\set{\vh \in \Z^2 : \va_2 \cdot \vh = 0}
$$
is in bijective correspondence with $\Z$ via the map $h \mapsto h\vb$.  Since $\va_d$ is not colinear to $\va_2$, we have $\va_d \cdot \vb \neq 0$, so we are legitimate in the assertion that 
\begin{equation*}
\begin{split}
\sum_{\vh\ :\ \va_2 \cdot \vh = 0}\  \norm{\Delta_{\va_d\cdot \vh}\, g_d}_{U^{d-1}} & =
\sum_h \norm{\Delta_{h \va_d\cdot \vb}\, g_d}_{U^{d-1}}\\
& \leq \sum_h \norm{\Delta_{h }\, g_d}_{U^{d-1}}.
\end{split}
\end{equation*}

Given that $g_d$ is supported on $[-N, N]$, the set of integers $h$ for which $\Delta_h\, g_d$ is not identically zero is contained in the interval $[-2N, 2N]$.  Hence by H\"{o}lder's inequality 
\begin{align*}
 \brac{\sum_h \norm{\Delta_h\, g_d}_{U^{d-1}}}^{2^{d-1}}& \leq (4N+1)^{2^{d-1}-1} \sum_h \norm{\Delta_h\, g_d }_{U^{d-1}}^{2^{d-1}}\\
& = (4N+1)^{2^{d-1}-1}  \norm{g_d}_{U^{d}}^{2^{d}}.
\end{align*}
Thus
\begin{align*}
& \Bigabs{\sum_{z_0, z_1} g_0(z_0)g_1(z_1)g_2(\va_2\cdot\vz) \dotsm g_d(\va_d\cdot\vz) }  \ll_d N^{\recip{2} + 1 - d2^{-d} + \frac{1}{2} - 2^{-d}} \norm{g_d}_{U^{d}}.
\end{align*}
 \end{proof}

 \begin{proof}[Proof of Lemma \ref{constrained von Neumann}]  {\ }

 Given $g : \Z \to [-1, 1]$ one can check that for any $1 \leq M_1 \leq M$ we have
 \begin{align*}
 \sum_{y \in [M]} g(y) &= \recip{M_1^2} \sum_{z_0, z_1 \in [M_1]}\ \sum_{y \in [M] + z_0 - z_1} g(y - z_0 + z_1) \\
&  = \recip{M_1^2} \sum_{z_0, z_1 \in [M_1]}\ \sum_{y \in [M] } g(y - z_0 + z_1) + O(M_1).
\end{align*}
Applying this to the left-hand side of \eqref{dAP assumption} and maximising over $y \in [M]$, we deduce that 
\begin{multline*}
 \Bigabs{\sum_x \sum_{y \in [M]} f_0(x)f_1(x+a_1y+b_1)\dotsm f_d(x+a_d y +b_d) } \ll NM_1 + \\
\frac{M}{M_1^2}\Bigabs{ \sum_x\sum_{z_0, z_1 \in [M_1]}  f_0(x)\tilde{f}_1\bigbrac{x+a_1(z_1 - z_0 )}\dotsm \tilde{f}_d\bigbrac{x+a_d ( z_1- z_0) } }.
\end{multline*}
Shifting the $x$ variable by $a_1 z_0$ and setting $\va_i := (a_1 - a_i, a_i)$ gives
\begin{multline*}
  \sum_x\sum_{z_0, z_1 \in [M_1]}  f_0(x)\tilde{f}_1(x+a_1(z_1 - z_0 ))\dotsm \tilde{f}_d(x+a_d ( z_1- z_0) ) = \\
   \sum_x\sum_{z_0, z_1 \in [M_1]}  f_0(x+a_1z_0)\tilde{f}_1(x+a_1z_1)\tilde{f}_2(x + \va_2 \cdot \vz)\dotsm \tilde{f}_d(x+\va_d\cdot \vz)   .
\end{multline*}

For fixed $x$, write
$$
g_{0}(z) := f_0(x + a_1z) 1_{[M_1]}(z), \qquad g_{1}(z) := \tilde{f}_1(x + a_1z ) 1_{[M_1]}(z)
$$
and for $i \geq 2$ set
$$
 g_{i}(z) := \tilde{f}_i(x + z) 1_{I}(z) \quad \text{where} \quad I := [-3HM_1, 3HM_1].
$$
Then our height estimate $|a_i| \leq H$ ensures that
\begin{multline*}
  \sum_{z_0, z_1 \in [M_1]}  f_0(x+a_1z_0)\tilde{f}_1(x+a_1z_1)\tilde{f}_2(x + \va_2 \cdot \vz)\dotsm \tilde{f}_d(x+\va_d\cdot \vz) = \\
  \sum_{z_0, z_1 }  g_0(z_0)g_1(z_1)g_2(\va_2 \cdot \vz)\dotsm g_d(\va_d\cdot \vz)   .
\end{multline*}

By Lemma \ref{unconstrained von neumann}
\begin{align*}
 \sum_{z_0, z_1 }  g_0(z_0)g_1(z_1)g_2(\va_2 \cdot \vz)\dotsm g_d(\va_d\cdot \vz)  \ll_d \brac{HM_1}^{2 - \frac{d+1}{2^d}} \norm{g_{d}}_{U^d}.
\end{align*}
Writing $M_2$ for the number of integers in $I$, the result follows on noting the lower bound  $\norm{1}_{U^d[M_2]}^{2^d} \gg_d M_2^{d+1}$ together with the shift invariance 
$$
\sum_x \bignorm{\tilde{f}_d}_{U^d(x + I)} = \sum_x \norm{f_d}_{U^d(x + [M_2])}. 
$$
 \end{proof}

Combining Lemma \ref{constrained von Neumann} and the linearisation process (Corollary \ref{linearisation induction}), we obtain the required generalised von Neumann theorem.

 \begin{corollary}[Local von Neumann theorem]\label{local von neumann}
 Let $f_0$, $\dots$, $f_n : \Z \to [-1, 1]$ be 1-bounded functions supported on $[N]$ and let $c_1,\dots, c_n$ be distinct non-zero integers.  Then there exist integers $r = r(n, k)$ and $d = d(n,k)$ such that for any $H\leq N^{1/k}$ and any $0 \leq i \leq n$ there exists $M \ll_{\vc} H^r N^{1/k}$ satisfying
$$
\Bigabs{N^{-\frac{k+1}{k}} \sum_{x\in \Z} \sum_{y \in \N} f_0(x) f_1\brac{x+c_1y^k} \dotsm f_n\brac{x+c_ny^k}}\ll_{\vc, k}  H^{-1/2^r} +\Biggbrac{\frac{H^{r+1}M}{N^{\frac{k+1}{k}}}\sum_x \frac{\norm{f_i}_{U^d(x+[M])}
}{\norm{1}_{U^d(x+[M])}
}}^{1/2^r}.
$$
\end{corollary}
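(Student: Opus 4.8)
The plan is to linearise the nonlinear average using the linearisation process of Corollary \ref{linearisation induction} and then apply the linear local von Neumann estimate of Lemma \ref{constrained von Neumann} to the function $f_i$, tracking the parameters carefully throughout. Write $T := N^{-\frac{k+1}{k}}\sum_{x\in\Z}\sum_{y\in\N} f_0(x) f_1(x+c_1y^k)\dotsm f_n(x+c_ny^k)$ for the normalised average in question. Since Corollary \ref{linearisation induction} produces a linear average in which it is the \emph{last} function that is tracked, I would first rearrange so that this function is $f_i$. If $1 \leq i \leq n$ it suffices to reorder the factors $f_j(x+c_jy^k)$ so that $f_i$ comes last, which merely permutes the (still distinct, nonzero) coefficients; the slot $f_0(x)$ is left untouched. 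If $i = 0$, I would first apply the change of variables $x \mapsto x - c_1y^k$, replacing the configuration by $f_0(x-c_1y^k)f_1(x)f_2(x+(c_2-c_1)y^k)\dotsm f_n(x+(c_n-c_1)y^k)$; since $c_1\neq 0$ and the $c_j$ are distinct, the coefficients $-c_1, c_2-c_1,\dots,c_n-c_1$ are again distinct and nonzero, and a further reordering places $f_0$, now attached to the coefficient $-c_1$, in the last slot. In every case the resulting coefficient vector depends only on $\vc$, so that implied constants depending on it may be written $\ll_{\vc}$.

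With this done, I would apply Corollary \ref{linearisation induction} with the given $H \leq N^{1/k}$ to obtain integers $r=r(n,k)$ and $d=d(n,k)$, an integer $M'$ with $1 \leq M'\leq N^{1/k}$, $1$-bounded functions $g_0,\dots,g_d$ supported on $[N]$ with $g_d=f_i$, and integers $a_j,b_j$ with the $a_j$ distinct, nonzero and of magnitude at most $H':=C_{\vc}H^{r}$, such that
\begin{equation*}
|T| \ll n H^{-1/2^r} + \Bigabs{N^{-\frac{k+1}{k}}\sum_x\sum_{y\in[M']} g_0(x) g_1(x+a_1y+b_1)\dotsm g_d(x+a_dy+b_d)}^{1/2^r}.
\end{equation*}
I would then feed this inner average into Lemma \ref{constrained von Neumann}, with the role of $H$ there played by $H'$, that of $M$ played by $M'$, and with the choice $M_1 := \min\set{M',\lceil N^{1/k}/H\rceil}$; since $H \leq N^{1/k}$ this satisfies $1\leq M_1\leq M'$ as well as $M_1 \leq 2N^{1/k}/H$. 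Writing $M:=M_2\ll H'M_1$ for the scale it produces, the lemma gives
\begin{equation*}
\Bigabs{\sum_x\sum_{y\in[M']} g_0(x)\dotsm g_d(x+a_dy+b_d)} \ll_d NM_1 + (H')^2M'\sum_x\frac{\norm{f_i}_{U^d(x+[M])}}{\norm{1}_{U^d(x+[M])}},
\end{equation*}
using $g_d=f_i$. Here $NM_1\ll N^{\frac{k+1}{k}}/H$ by the bound on $M_1$, while $(H')^2M'\ll_{\vc}H^{2r}N^{1/k}$ and $M\ll H'M_1\ll_{\vc}H^{r}N^{1/k}$, the latter being the range for $M$ asserted in the statement.

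To finish I would divide the last display by $N^{\frac{k+1}{k}}$, noting $N^{1/k}\cdot N^{-\frac{k+1}{k}} = N^{-1}$, to bound the normalised linear average by $\ll_{\vc}H^{-1}+\frac{H^{2r}}{N}\sum_x\norm{f_i}_{U^d(x+[M])}/\norm{1}_{U^d(x+[M])}$, and then substitute this into the estimate for $|T|$. Using the elementary inequality $(u+v)^{1/2^r}\leq u^{1/2^r}+v^{1/2^r}$, the $H^{-1}$ term contributes $H^{-1/2^r}$, which combines with the $nH^{-1/2^r}$ already present to give
\begin{equation*}
|T|\ll_{\vc,k} H^{-1/2^r} + \Bigbrac{\frac{H^{2r}}{N}\sum_x\frac{\norm{f_i}_{U^d(x+[M])}}{\norm{1}_{U^d(x+[M])}}}^{1/2^r},
\end{equation*}
which is the assertion of the corollary.

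The substantive content here is carried entirely by Corollary \ref{linearisation induction} and Lemma \ref{constrained von Neumann}, so I do not expect a genuine obstacle; the only step requiring a little care is the choice of the auxiliary scale $M_1$. It must be taken of size $\asymp N^{1/k}/H$ so that the trivial term $NM_1$ appearing in Lemma \ref{constrained von Neumann} is of the admissible size $N^{\frac{k+1}{k}}/H$ (matching the error $H^{-1/2^r}$ once a $2^r$th root has been extracted), yet it may not be allowed to exceed the length $M'$ of the summation interval that survives the linearisation, which is why I truncate it at $M'$ above. Everything else is routine parameter bookkeeping.
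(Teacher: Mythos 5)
Your proposal is correct and follows essentially the same route as the paper: reduce to the case where the tracked function sits in the last slot (by reordering for $i\geq 1$ and a change of variables for $i=0$), apply Corollary \ref{linearisation induction}, then feed the resulting linear average into Lemma \ref{constrained von Neumann} with $M_1 \asymp \min\set{M', N^{1/k}/H}$ so that the trivial term $NM_1$ is of size $N^{(k+1)/k}/H$. The only cosmetic difference is that the paper shifts by $c_ny^k$ rather than $c_1y^k$ in the $i=0$ case; the parameter bookkeeping is otherwise identical.
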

 
 \begin{proof}
 
 Without loss of generality, we may assume that $i = n$.  This is clear on re-ordering indices if $i >0$.  If $i = 0$ then we perform the change of variables 
\begin{multline*}
 \sum_{x\in \Z} \sum_{y \in \N} f_0(x) f_1\brac{x+c_1y^k} \dotsm f_n\brac{x+c_ny^k} = \\ \sum_{x\in \Z} \sum_{y \in \N} f_0\brac{x - c_n y^k} f_1\brac{x+(c_1-c_n)y^k} \dotsm f_{n-1} \brac{x + (c_{n-1} - c_n) y^k}f_n\brac{x},
\end{multline*}
noting that the integers $-c_n, c_1 - c_n, \dots, c_{n-1} - c_n$ are distinct and non-zero.
 
By Corollary \ref{linearisation induction} there exist integers $r = r(n, k)$ and $d = d(n,k)$ such that for any $H \leq N^{1/k}$ there exists $M \leq N^{1/k}$ for which 
\begin{multline*}
\Bigabs{N^{-\frac{k+1}{k}} \sum_{x\in \Z} \sum_{y \in \N} f_0(x) f_1\brac{x+c_1y^k} \dotsm f_n\brac{x+c_ny^k}} \ll_n H^{-1/2^r} +\\ \Bigabs{N^{-\frac{k+1}{k}}\sum_x \sum_{y \in [M]} g_0(x)g_1(x+a_1y+b_1)\dotsm g_d(x+a_d y +b_d) }^{1/2^r},
\end{multline*}
where $g_0, \dots, g_{d}$ are 1-bounded functions supported on $[N]$ with $g_d = f_n$, and $a_i,b_i$ are integers with $a_i$ distinct, non-zero, and of magnitude at most $O_{\vc}\brac{H^{r}}$.
 
Set $M_1 := \min\set{ \floor{N^{1/k}/H}, M}$.  Then by Lemma \ref{constrained von Neumann} there exists $H^r M_1 \leq M_2 \ll_{\vc} H^rM_1$ for which 
$$
 \Bigabs{\sum_x \sum_{y \in [M]} g_0(x)g_1(x+a_1y+b_1)\dotsm g_d(x+a_d y +b_d) } \ll_{d, \vc}  NM_1 + 
  H^{2r}M\sum_x \frac{\norm{f_d}_{U^d(x+[M_2])}}{\norm{1}_{U^d(x+[M_2])}}.
$$
The result follows on combining this with the estimate
$$
M_2 \geq H^rM_1 \gg H^r \min\set{N^{1/k}/H, M} \geq H^{r-1} M.
$$
\end{proof}
 
\section{Modifying Gowers's local inverse theorem}\label{mod gowers section}

In the course of re-proving Szemer\'edi's theorem, Gowers \cite[Theorem 18.1]{gowers01} established the following local inverse theorem for the uniformity norm $\norm{\cdot}_{U^d}$.

\begin{gowersthm}For $d \geq 1$ there exist constants $C = C(d)$  and $c = c(d) >0$ such that the following is true.  Suppose that
$f : \Z \to [-1,1]$ satisfies 
\begin{equation*}\label{ModUdBound:eqn}
\norm{f}_{U^d[N]} \geq \delta \norm{1}_{U^d[N]}.
\end{equation*}  
Then one can partition $[N]$ into arithmetic progressions $P_i$, of average length at least $c\delta^CN^{c\delta^{C}}$ such that
\begin{equation}\label{gowers conclusion}
\begin{split}
& \sum_i \norm{f}_{U^1(P_i)} \geq c \delta^{C} \sum_i \norm{1}_{U^1(P_i)}.
\end{split}
\end{equation}

\end{gowersthm}

This result implies that a set $A \subset [N]$ lacking $d+1$ elements in arithmetic progression has size bound $|A| \ll N(\log \log N)^{-\kappa}$ for some small positive constant $\kappa = \kappa(d)$.  The precise value of $\kappa(d)$ depends very much on the permissible value of $C = C(d)$ in the local inverse theorem, which Gowers explicitly calculates.  For polynomial progressions of the form \eqref{main config}, the number of iterative steps required in the linearisation process of \S \ref{linearisation section} means that the degree of $d = d(n,k)$ of the Gowers norm controlling this configuration grows inordinately rapidly in $n$ and $k$; so much so that we have refrained from estimating it, rendering explicit estimates of $C(d)$ tangential to our purpose.  As a consequence, our final density bound \eqref{loglog bound} has an inexplicit exponent of $\log\log N$.

Unfortunately we cannot use Gowers's inverse theorem as stated.  Our difficulty is that the theorem gives us information about the $U^1$-norm of a function localised to arithmetic progressions, yet we require these progressions to take a special form.

\begin{definition}[$k$th power progression]
Call an arithmetic progression a \emph{$k$th power progression} if it has common difference equal to a perfect $k$th power.
\end{definition}

We require the following modified version of Gowers's inverse theorem.

\begin{theorem}[Gowers's inverse theorem for $k$th power progressions]\label{ModInverse:thm}
For $d, k \geq 1$ there exist $C = C(d,k)$ and $c = c(d,k) >0$ such that the following is true.  Suppose that
$f : \Z \to [-1,1]$ satisfies 
\begin{equation*}
\norm{f}_{U^d[N]} \geq \delta \norm{1}_{U^d[N]}.
\end{equation*}  
Then one can partition $[N]$ into $k$th power progressions $P_i$, of average length at least $c\delta^CN^{\exp(-1/c\delta^{C})}$ such that
\begin{align*}
& \sum_i \norm{f}_{U^1(P_i)} \geq c \delta^{C} \sum_i \norm{1}_{U^1(P_i)}.
\end{align*}
\end{theorem}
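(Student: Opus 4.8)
The plan is to re-run the proof of Gowers's inverse theorem, maintaining throughout the \emph{invariant} that every arithmetic progression it produces has common difference equal to a perfect $k$th power. This cannot be arranged by post-processing the output: the progressions delivered by the unmodified theorem may have common difference as large as $N^{1-c\delta^C}$, and the least $k$th power multiple of such a modulus $q$ has size roughly $q^k$, so the corresponding $k$th power sub-progression inside $[N]$ would be empty. One must instead intervene at every stage of the iterative argument, while the moduli are still being built up.

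Recall that Gowers's proof repeatedly passes from a progression $P$ of common difference $q$ to a sub-progression $P' \subset P$ of common difference $qs$, for a ``step ratio'' $s$ that is sometimes required only to exceed a threshold $s_0 = s_0(\delta, |P|)$ large enough for the subsequent correlation and equidistribution estimates to apply, and is sometimes prescribed up to divisibility by a period arising from a Bohr set or a polynomial phase. In the first case we replace $s$ by the least perfect $k$th power $s' \geq s$; as $s'/s \leq (1+s^{-1/k})^k \leq 2^k$, this shrinks the sub-progression by at most a bounded factor. In the second case we replace the prescribed period $q_0$ by the least $k$th power multiple of it, of size at most $q_0^k$, and pass to a correspondingly shorter sub-progression; here one must check, exactly as Green does for the $U^3$-norm in \cite{green02}, that the hypotheses of the relevant equidistribution lemma survive the inflation of the period. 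Since $[N]$ is a progression of common difference $1 = 1^k$ and a product of perfect $k$th powers is again a perfect $k$th power, the invariant propagates through the whole argument --- including through the final covering of $[N]$ by the progressions $P_i$ --- so that this covering consists of $k$th power progressions.

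It remains to account for the extra loss of length and to note that the $U^1$-side of the argument is untouched. The iteration selects a sub-progression at most $T = T(d, \delta) \ll \delta^{-C}$ times, and each selection now costs, by the above, at worst an additional factor polynomial in $\delta^{-1}$ and in the periods encountered; tracking these losses through the same bookkeeping that yields Gowers's length bound $c\delta^C N^{c\delta^C}$ degrades it to $c\delta^C N^{\exp(-1/c\delta^C)}$, which is substantially weaker but is all we require. The conclusion $\sum_i \norm{f}_{U^1(P_i)} \geq c\delta^C \sum_i \norm{1}_{U^1(P_i)}$ is the statement $\sum_i \bigl| \sum_{x \in P_i} f(x) \bigr| \geq c\delta^C N$, and its proof (extraction of bias from correlation with a bounded function, via Cauchy--Schwarz and pigeonholing) refers only to the \emph{lengths} of the progressions involved, not to the arithmetic of their common differences; it therefore survives the modification verbatim. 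Finally, when $N$ lies below the threshold needed for the correlation estimates --- that is, when $\log N \ll \delta^{-C}$ --- one simply partitions $[N]$ into singletons (trivially $k$th power progressions, and of admissible length once the constants are fixed), and notes that the hypothesis $\norm{f}_{U^d[N]} \geq \delta \norm{1}_{U^d[N]}$ already forces $\sum_x |f(x)| \gg_d \delta^{2^d} N$ --- bound all but one factor of $\Delta_{h_1, \dots, h_d} f(x)$ by $1$ --- so that the $U^1$-estimate holds in this regime too.

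The principal obstacle is the lengthy but ``purely technical'' verification underlying the rounding step above: passing line by line through Gowers's proof and confirming that, at every point where a sub-progression is chosen, one retains the freedom to choose one with $k$th power common difference, and in particular that the equidistribution and linear-algebra steps, which may dictate a specific period, remain valid once that period is replaced by a $k$th power multiple. This is exactly the modification ``first demonstrated for the $U^3$-norm by Green'' alluded to in \S\ref{structure}; the work lies in checking that Green's device globalises to all degrees $d$.
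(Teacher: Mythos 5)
Your overall strategy --- propagate the invariant ``common difference is a perfect $k$th power'' through Gowers's iteration, starting from $[N]$ with difference $1=1^k$ and using that a product of $k$th powers is a $k$th power --- is exactly the paper's, as is your observation that the $U^1$ extraction and the degenerate small-$N$ regime need no change. The gap is in your second case, where a specific modulus $q_0$ is dictated by a Diophantine condition (a Bohr set $B(K,\eta)$, or a single polynomial phase via Weyl). You propose to replace $q_0$ by its least $k$th power multiple and ``check that the hypotheses of the relevant equidistribution lemma survive the inflation of the period.'' They do not: if the argument has produced $q_0 \leq Q$ with $\norm{\alpha q_0} \leq \eta$ for all $\alpha \in K$, then all one can say about the inflated modulus is $\norm{\alpha q_0^k} \leq q_0^{k-1}\norm{\alpha q_0} \leq Q^{k-1}\eta$, and since $Q$ is a power of $N$ in these applications this is vacuous. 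Even for a single frequency and $k=2$, Dirichlet gives $\norm{\alpha q}\leq 1/Q$ with $q \leq Q$, whence only $\norm{\alpha q^2} \leq q/Q \leq 1$. Rounding the modulus after the fact cannot work.

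What the paper does instead is replace the Diophantine \emph{inputs} themselves by their $k$th power recurrence analogues: Weyl's inequality applied to $\alpha q^{dk}$ in place of $\alpha q^{d}$, and, for the Bohr set step, Cook's simultaneous recurrence $\min_{1 \leq q \leq Q}\max_{i}\norm{\alpha_i q^k} \ll_k Q^{-\exp(-Cr)}$ (Lemma \ref{kth recurrence}, proved there by iterating a one-variable $k$th power recurrence so as to keep the implied constant independent of $r$). This produces directly a modulus which \emph{is} a $k$th power and still satisfies the required approximation. It is also the true source of the degraded length bound: the Bohr set $B(K,\eta)$ now contains a $k$th power progression of length only $\gg_k \eta N^{\exp(-C|K|)}$ rather than $\gg \eta N^{1/(1+|K|)}$, and with $|K| \ll \delta^{-C}$ this gives the exponent $\exp(-1/c\delta^C)$. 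Your attribution of the loss to ``a factor polynomial in $\delta^{-1}$ and in the periods encountered'' does not produce a bound of this shape --- and if the periods really did enter polynomially, the progressions would vanish altogether.
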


The proof of Theorem \ref{ModInverse:thm} follows from an elementary, albeit lengthy, modification of the argument of Gowers \cite[pp.489-585]{gowers01}.  The reader is referred to \cite[\S 5]{green02} for a detailed exposition of the argument for the $U^3$-norm with $k =2$.  Below we sketch the content of the main modification needed in the general case.

Gowers's argument begins by working over the progression $[N]$ (identified with a subset of the group $\Z/N'\Z$ for some prime $N' \ll_d N$) and proceeds by repeatedly passing to (integer) subprogressions, finally obtaining the subprogressions $P_i$ of the conclusion \eqref{gowers conclusion}. This subprogression refinement takes place in 
\cite[\S 16]{gowers01} 
(with an additional refinement taking place in 
\cite[Prop.~17.7]{gowers01}),
repeatedly employing results from \cite[\S5 \& \S 7]{gowers01}  to obtain the required subprogression.  Each stage passes from a progression of common difference $m$ to a progression of common difference $mq$, where $q$ arises in one of the following three ways.
\begin{enumerate}[({A}1)]
\item  Passage to a shorter segment of the same progression, as in
\cite[Prop.~17.7]{gowers01},
so that $q = 1$.
\item\label{dioph approx}  An application of results in
\cite[\S5]{gowers01},
all of which ultimately rest on the following consequence of Weyl's inequality:  There exists $c_{d} > 0$ such that for any $\alpha \in \T$ and $Q \geq 1$ we have
\begin{equation}\label{dioph approx eqn}
\min_{1 \leq q \leq Q}\norm{\alpha q^d} \ll_d Q^{-c_d}.
\end{equation}
See for example \cite[Lem.~16.1]{gowers01}.
\item\label{bohr set difference}  An application of the fact that the Bohr set 
\begin{equation}\label{bohr set}
B(K, \eta ) := \set{ x \in [-N/2, N/2): \norm{\alpha x} \leq \eta \quad (\alpha \in K)}
\end{equation}
contains an arithmetic progression of length $\gg \eta N^{1/(1+|K|)}$.  See \cite[Cor.~7.9--7.10, Lem.~13.4, etc]{gowers01}. 
\end{enumerate}
If $m$ and $q$ are both perfect $k$th powers then it follows that $mq$ is a perfect $k$th power.  Since this iteration begins with $m = 1$, which is itself a perfect $k$th power, it suffices to verify that one can take $q$ equal to a perfect $k$th power in (A\ref{dioph approx}) and (A\ref{bohr set difference}). 

This easily follows for (A\ref{dioph approx}) by replacing $d$ with $dk$ in \eqref{dioph approx eqn}, so that
$$
 \min_{1 \leq q \leq Q} \norm{\alpha q^{dk}} \ll_{d, k} Q^{-c_{d, k}}.
$$
Quantitatively, this replaces the absolute constants $c(d), C(d)$ in Gowers's inverse theorem with constants $c(d,k), C(d, k)$ .

We replace (A\ref{bohr set difference}) with the following.
\begin{lemma}\label{bohr set kth prog}
There exists an absolute constant $C = C(k)$ such that the Bohr set $B(K, \eta)$ defined in \eqref{bohr set} contains a $k$th power progression of length at least
\begin{equation*}
\begin{split}
\gg_k \eta N^{\exp(-C|K|)}.
\end{split}
\end{equation*}
\end{lemma}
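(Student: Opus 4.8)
The plan is to reduce the statement to a question of simultaneous Diophantine approximation of the frequencies in $K$ by a common perfect $k$th power, and to settle that question by induction on $|K|$, with the single-frequency case supplied by the Weyl-type bound \eqref{dioph approx eqn} (with the exponent $d$ there replaced by $k$).

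\textbf{The approximation sub-lemma.} First I would isolate the following claim: for every integer $d \geq 0$ there is a constant $A_d = A_d(k) \geq 1$, with $1 + A_d = (1+A_1)^d$, such that for all $\alpha_1, \dots, \alpha_d \in \TT$ and all $\epsilon \in (0,1]$ there is a perfect $k$th power $q$ with $1 \leq q \leq \epsilon^{-A_d}$ and $\norm{\alpha_i q} \leq \epsilon$ for every $i$. For $d = 0$ take $q = 1$. For $d = 1$, the bound \eqref{dioph approx eqn} (with $k$ in place of $d$) gives $\min_{1 \leq s \leq S}\norm{\alpha_1 s^k} \ll_k S^{-c_k}$; choosing $S \asymp_k \epsilon^{-1/c_k}$ and letting $q := s^k$ for the minimising $s$ gives $q \ll_k \epsilon^{-k/c_k}$ together with $\norm{\alpha_1 q} \leq \epsilon$, so $A_1 = k/c_k$ works. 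For the inductive step $d-1 \to d$: apply the case $d=1$ to $\alpha_d$ at precision $\epsilon_1 := \epsilon^{1+A_{d-1}}$ (up to an admissible constant factor) to obtain a perfect $k$th power $q_d \ll_k \epsilon_1^{-A_1}$ with $\norm{\alpha_d q_d} \leq \epsilon_1$, and then apply the inductive hypothesis to the frequencies $\alpha_1 q_d, \dots, \alpha_{d-1} q_d$ at precision $\epsilon$ to obtain a perfect $k$th power $q' \ll_k \epsilon^{-A_{d-1}}$ with $\norm{(\alpha_i q_d)q'} \leq \epsilon$ for $i < d$. Then $q := q_d q'$ is a perfect $k$th power with $\norm{\alpha_i q} = \norm{(\alpha_i q_d)q'} \leq \epsilon$ for $i < d$, while $\norm{\alpha_d q} = \norm{(\alpha_d q_d)q'} \leq q'\norm{\alpha_d q_d} \leq \epsilon^{-A_{d-1}}\epsilon^{1+A_{d-1}} = \epsilon$; moreover $q \ll_k \epsilon^{-[(1+A_1)(1+A_{d-1})-1]}$, so the recursion $1+A_d = (1+A_1)(1+A_{d-1})$ yields $1 + A_d = (1+A_1)^d$.

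\textbf{Deducing the lemma.} Write $d := |K|$ and assume $\eta \leq 1$ (if $\eta > 1$ then $B(K,\eta) = [-N/2,N/2)$ contains $\set{0,1,2,\dots}$). For a length $L$ still to be fixed, apply the sub-lemma to the elements of $K$ at precision $\epsilon := \eta/(2L)$ to obtain a perfect $k$th power $q \leq (2L/\eta)^{A_d}$ with $\norm{\alpha q} \leq \eta/(2L)$ for all $\alpha \in K$, and set $P := \set{jq : 0 \leq j < L}$, a $k$th power progression. For $x = jq \in P$ and $\alpha \in K$, the triangle inequality on $\TT$ gives $\norm{\alpha x} \leq j\norm{\alpha q} < \eta$, while $0 \leq x < Lq \leq L(2L/\eta)^{A_d}$; taking $L := \lfloor \tfrac12(N\eta^{A_d})^{1/(1+A_d)}\rfloor$ ensures $2^{A_d}L^{1+A_d}\eta^{-A_d} \leq N/2$, hence $P \subseteq B(K,\eta)$. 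Then $L \gg \eta^{A_d/(1+A_d)}N^{1/(1+A_d)} \geq \eta N^{1/(1+A_d)}$, and since $1/(1+A_d) = (1+A_1)^{-d} = \exp(-|K|\log(1+A_1))$, this is $\gg \eta N^{\exp(-C|K|)}$ with $C := \log(1+A_1(k))$. (When the right-hand side drops below $1$ the trivial progression $\set{0} \subseteq B(K,\eta)$ suffices; the residual dependence of the implied constants on $|K|$ is immaterial for the sequel and can be absorbed into $C$ for $N$ large.)

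\textbf{The main obstacle} is the choice of intermediate precision in the inductive step above. Dilating by $q'$ to handle $\alpha_1, \dots, \alpha_{d-1}$ amplifies the error $\norm{\alpha_d q_d}$ by the factor $q'$, which is unavoidably large; a careless choice of precisions makes $A_d$ grow \emph{doubly} exponentially in $|K|$, which would destroy the bound. Setting $\epsilon_1 = \epsilon^{1+A_{d-1}}$ is precisely what cancels this amplification and forces $1 + A_d = (1+A_1)^{|K|}$, which is what produces the exponent $\exp(-C|K|)$ in the conclusion. Everything else is routine: working with progressions through the origin makes both the containment in $[-N/2,N/2)$ and the absence of any \emph{centering} issue automatic, since $\norm{\alpha(jq)} \leq j\norm{\alpha q}$ directly.
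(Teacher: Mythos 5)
Your argument is essentially the paper's: both prove a simultaneous $k$th-power recurrence by iterating the single-frequency Weyl bound over the frequencies with geometrically decaying precisions (your recursion $1+A_d=(1+A_1)(1+A_{d-1})$ plays exactly the role of the paper's exponents $k^4/(k^4+1)^i$ in its elementary proof of Cook's lemma), and then optimise the step size of a progression against the width and length of the Bohr set. The one point to tighten is the implied constants: your induction picks up a factor $O_k(1)$ at each of the $|K|$ steps, so as written the final constant depends on $|K|$ — something the paper explicitly arranges to avoid, since it stresses that the constant in the recurrence must be independent of $r=|K|$; this is repairable (for $\epsilon\leq 1/2$ the accumulated factor $B^{|K|}$ can be absorbed into the exponent $A_d$ without altering its exponential growth in $|K|$), but it should be carried out rather than deferred to the closing parenthesis.
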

In the proof of Gowers's inverse theorem appearing in \cite{gowers01}, all applications of (A\ref{bohr set difference}) have $\eta^{-1} \leq C \delta^{-C}$ and $|K| \leq C\delta^{-C}$ for some absolute constant $C = C(d)$.  This leads to the passage from an arithmetic progression of length $N$ to a progression of length $ c \delta^CN^{c\delta^{C}}$.  Replacing (A\ref{bohr set difference}) with Lemma \ref{bohr set kth prog} therefore passes to an arithmetic progression of length at least $$c \delta^C N^{\exp(-1/c\delta^{C} )}.$$
This leads to the final lower bound on the length of $k$th power progressions obtained in Theorem \ref{ModInverse:thm}.

Lemma \ref{bohr set kth prog} follows from an application of the following result of Cook \cite{cook}.

\begin{lemma}[Simultaneous $k$th power recurrence]\label{kth recurrence}
There exists an absolute constant $C = C(k)$ such that for any $\alpha_1, \dots, \alpha_r \in \T$ and $Q\geq 1$ we have
\begin{equation}\label{quantitative cook}
\begin{split}
\min_{1 \leq q \leq Q} \max_{1\leq i \leq r} \norm{\alpha_i q^k} \ll_k Q^{-\exp(-Cr)}.
\end{split}
\end{equation}
\end{lemma}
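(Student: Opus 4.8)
Following Cook \cite{cook}, the plan is to bound from below the number of admissible dilates
\[
N(\eta) := \#\set{1 \leq q \leq Q : \norm{\alpha_i q^k} < \eta \text{ for all } 1 \leq i \leq r}
\]
by the circle method, and to show that $N(\eta) > 0$ already for some $\eta$ of the shape $Q^{-\exp(-Cr)}$. Solutions are detected by a Fourier minorant for the box $[-\eta,\eta]^r$. Let $\Lambda_\eta(\theta) := \max\brac{0,\ 1 - \norm{\theta}/\eta}$ be the triangle function on $\T$: it is non-negative, supported on $\norm{\theta} \leq \eta$, has $\hat\Lambda_\eta(0) = \eta$, and satisfies $\abs{\hat\Lambda_\eta(h)} \ll \min\brac{\eta,\ \eta^{-1}h^{-2}}$, so that $\sum_h \abs{\hat\Lambda_\eta(h)} \leq C_0$ for an absolute constant $C_0$. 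Since the tensor product $\prod_{i=1}^r \Lambda_\eta(\theta_i)$ is non-negative, vanishes whenever some $\norm{\theta_i} > \eta$, and is at most $1$ otherwise, one has
\[
\sum_{q \leq Q} \prod_{i=1}^r \Lambda_\eta(\alpha_i q^k) \ \leq\ N(\eta),
\]
and it suffices to show this sum is strictly positive.

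Expanding each factor into its (absolutely convergent) Fourier series and interchanging the order of summation gives
\[
\sum_{q \leq Q} \prod_{i=1}^r \Lambda_\eta(\alpha_i q^k) \ =\ \sum_{\vh \in \Z^r}\ \prod_{i=1}^r \hat\Lambda_\eta(h_i)\cdot W\Bigbrac{\sum_{i=1}^r h_i\alpha_i}, \qquad W(\beta) := \sum_{q \leq Q} e(\beta q^k).
\]
The term $\vh = \mathbf{0}$ produces the expected main term $\gg \eta^r Q$. Using $\abs{W} \leq Q$ one truncates to $\max_i\abs{h_i} \leq H$ for a suitable $H$ of size $\eta^{-O(r)}$, the tail contributing at most $Q\sum_{\max_i\abs{h_i}>H}\prod_i\abs{\hat\Lambda_\eta(h_i)} \ll rC_0^{r}(\eta H)^{-1} \ll \eta^r Q$. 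For the surviving $\vh \neq \mathbf{0}$ one bounds $W(\beta_\vh)$, with $\beta_\vh := \sum_i h_i\alpha_i$, by Weyl's inequality: fixing a Dirichlet approximation $\abs{\beta_\vh - p/a} \leq (aQ^{k-1})^{-1}$ with $\gcd(p,a) = 1$ and $a \leq Q^{k-1}$,
\[
\abs{W(\beta_\vh)} \ll_k Q^{1+o(1)}\Bigbrac{a^{-1} + Q^{-1} + aQ^{-k}}^{2^{1-k}},
\]
so $\abs{W(\beta_\vh)} \ll_k Q^{1-\delta_k}$ for some $\delta_k = \delta_k(k) > 0$, \emph{unless} $\beta_\vh$ lies on a major arc, that is, within $Q^{-k+\tau}$ of a rational of denominator at most $Q^\tau$ for a small fixed $\tau = \tau(k) > 0$. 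The minor-arc terms together contribute $\ll_k Q^{1-\delta_k}\sum_\vh\prod_i\abs{\hat\Lambda_\eta(h_i)} \ll_k C_0^r Q^{1-\delta_k}$, which is negligible against $\eta^r Q$ once $\eta \geq Q^{-\delta_k/(2r)}$; in particular, were there no major arcs this argument would already yield the far stronger bound $Q^{-\delta_k/(2r)}$.

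The main obstacle — and the sole source of the exponential-in-$r$ loss — is therefore the major-arc contribution, where Weyl's inequality gives nothing and $\abs{W(\beta_\vh)}$ may be as large as $Q$. These are handled by a descent on the number of frequencies: either no $\vh$ with $0 < \max_i\abs{h_i} \leq H$ has $\beta_\vh$ on a major arc, in which case the estimates above force $N(\eta) > 0$; or some such $\vh_0$ exists, furnishing a nontrivial approximate linear relation among $\alpha_1,\dots,\alpha_r$. Extending the primitive vector $\vh_0/\gcd(h_{0,1},\dots,h_{0,r})$ to a basis of $\Z^r$ replaces $\alpha_1,\dots,\alpha_r$ by frequencies $\beta_1,\dots,\beta_r$ with $\beta_1$ close to a rational of controlled denominator and each $\alpha_i$ a bounded integer combination of the $\beta_j$; one is thereby reduced, at the cost of a worse admissible $\eta$, to the $r-1$ frequencies $\beta_2,\dots,\beta_r$, to which the lemma is applied inductively. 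Each such reduction worsens the admissible exponent of $\eta$ by at most a fixed factor $D = D(k)$, so that after at most $r$ steps one reaches an admissible value $\eta \asymp_k Q^{-\exp(-Cr)}$ with $C = C(k)$, which gives \eqref{quantitative cook}. Choosing the scale $H$ and the major-arc width and controlling their interaction as $r$ decreases is the technical heart of the argument, and is carried out in full in Cook \cite{cook}.
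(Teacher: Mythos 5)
You are reconstructing Cook's original circle-method proof, which is a genuinely different route from the paper's, but as written it has two real gaps.

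First, the decisive step is missing. The minor-arc analysis you sketch only works when no nonzero $\vh$ with $\max_i|h_i|\leq H$ places $\beta_{\vh}$ on a major arc; the complementary case is handled by your ``descent on the number of frequencies,'' which you describe only qualitatively and then explicitly defer to Cook. That descent is the technical heart of the argument: one must make the change of basis quantitative, track how the admissible $\eta$ degrades at each of up to $r$ steps, and control the interaction between the truncation height $H$ and the major-arc width as $r$ decreases. Without it the proof is not self-contained.

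Second, and more importantly for this paper, the lemma asserts $\ll_k$ with an implied constant depending on $k$ alone, and the paper states explicitly that this is why it does \emph{not} simply cite Cook: Cook's Theorem~1 has an implicit constant depending on $r$. Your argument inherits exactly this problem. Already in the part you write out, the minor-arc contribution carries a factor $C_0^r$, so ``negligible against $\eta^r Q$'' requires $Q\geq C_0^{O(r/\delta_k)}$; this particular issue can be repaired (for smaller $Q$ the bound $Q^{-\exp(-Cr)}\geq 1/2$ makes the claim trivial, provided $C$ is large enough), but you do not say so, and the analogous bookkeeping through $r$ rounds of the descent is precisely what needs to be verified and is not.

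For contrast, the paper's proof is elementary and one page long: it takes the single-frequency recurrence $\min_{1\leq q\leq Q}\norm{\alpha q^k}\leq C_kQ^{-1/k^3}$ (a weak consequence of Wooley, or Heilbronn for $k=2$) and applies it $r$ times in succession, choosing $q_i\leq Q^{k^4/(k^4+1)^i}$ so that $\norm{\alpha_i q_1^k\dotsm q_i^k}$ is small; setting $q=q_1\dotsm q_r$, the loss from multiplying by $q_{i+1}^k\dotsm q_r^k$ is absorbed by the shrinking ranges, yielding the exponent $k^{-5r}$ with a constant $C_k$ that never sees $r$. If you want to salvage the circle-method route you must supply the descent in full and verify uniformity in $r$ throughout; otherwise the iterative argument is the efficient path.
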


Although \cite[Theorem 1]{cook} has superior dependence on $r$ and $k$ in the exponent of $Q$, it has an implicit constant in \eqref{quantitative cook} depending $r$.  The nature of this dependence is important for our application.  We therefore offer the following elementary proof, with implicit constant independent of $r$, following the arguments of \cite{gowers01, green02}.

\begin{proof}
The $k = 1$ case follows from Kronecker's theorem on simultaneous Diophantine approximation.  Let us therefore suppose that $k \geq 2$.  

By a weak version of \cite[Theorem 1.7]{wooley} (and \cite{heilbronn} for $k = 2$), there is a constant $C_k$ such that for any $\alpha \in \T$ and $Q \geq 1$
\begin{equation}\label{wooley}
\begin{split}
\min_{1 \leq q \leq Q} \norm{\alpha q^k} \leq C_k Q^{-1/k^3}.
\end{split}
\end{equation}
Iteratively applying \eqref{wooley}, for each $1 \leq i \leq r$ we can find
\begin{equation*}
\begin{split}
1\leq q_i \leq Q^{\frac{k^4}{(k^4+1)^i}} \quad \text{with} \quad \norm{\alpha_i q_1^k \dotsm q_i^k} \leq C_kQ^{-\frac{k}{(k^4+1)^i}}.
\end{split}
\end{equation*}
Setting $q := q_1 \dotsm q_r$ we have
\begin{equation*}
\begin{split}
q \leq Q^{1 - (k^4 +1)^{-r}} \leq Q
\end{split}
\end{equation*}
and for each $i$
\begin{equation*}
\begin{split}
\norm{\alpha_i q^k} & \leq \norm{\alpha_i q_1^k \dotsm q_i^k} q_{i+1}^k \dotsm q_{r}^k \\
& \leq C_k Q^{k\brac{-\frac{1}{(k^4+1)^i}+ \frac{k^4}{(k^4+1)^{i+1}} + \dots + \frac{k^4}{(k^4+1)^{r}}}}\\
& = C_k Q^{- k(k^4+1)^{-r}}\\
& \leq C_k Q^{- k^{-5r}}.
\end{split}
\end{equation*}
The last estimate following from the inequality $k^4 +1 \leq k^5$ when $k \geq 2$.  The lemma follows with $C = 5\log k$.
\end{proof}

\begin{proof}[Proof of Lemma \ref{bohr set kth prog}]
Let $r := |K|$. Employing Lemma \ref{kth recurrence}, there exists $1 \leq q \leq Q$ such that for any $\alpha \in K$ we have
$$
\norm{\alpha q^k } \leq C Q^{-\exp(-Cr)}.
$$ 

Let $L$ be the largest non-negative integer satisfying $L Q < N/2$ and $LC Q^{-\exp(-Cr)} < \eta$.  Then $B(K, \eta)$ (as defined in \eqref{bohr set}) contains the progression $$\set{ -Lq^k, \dots, -q^k, 0, q^k, \dots, Lq^k},$$ which has length 

 \begin{equation*}
\begin{split}
2L + 1 & \geq \min\set{N/(2Q), (\eta/C) Q^{\exp(-Cr)}}\\
& \gg_k \min\set{ N/Q, \eta Q^{\exp(-Cr)}}.
\end{split}
\end{equation*}

Taking 
$$
Q := (N/\eta)^{\recip{1 + \exp(-Cr)}},
$$ so that $N/Q = \eta Q^{\exp(-Cr)}$, we deduce that 
$
2L+1 \gg_k \eta N^{\exp(-Cr)} 
$
(after increasing $C$ a little).
\end{proof}

\section{The density increment and  final iteration}\label{iteration}

In this section we combine all of our previous work to establish the following density increment lemma.

\begin{lemma}[Density increment lemma]\label{density increment lemma} There exist absolute constants $C = C(n, k)$ and $c = c(\vc, k) > 0$ such that the following is true.  Let $A$ be a subset of $[N]$ of size $\delta N$ which lacks configurations of the form 
\begin{equation}\label{kth power configuration}
x,\ x+c_1y^k,\ \dots,  x+c_ny^k \quad \text{with} \quad y \in \Z\setminus\set{0}.
\end{equation}
Suppose that
\begin{align}\label{exp density increment N bound} 
& N \geq \exp\exp(1/(c\delta^{C})).
\end{align} 
Then there exists a $k$th power progression $P$ such that
\begin{align*}
|P| \geq N^{\exp(-1/c\delta^{C})} \qquad \text{and} \qquad |A\cap P| \geq (\delta +c\delta^{C}) |P|.
\end{align*}
\end{lemma}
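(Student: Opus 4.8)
The plan is to run the density-increment dichotomy underlying \S\ref{structure}, using the local von Neumann theorem (Corollary~\ref{local von neumann}) in place of a global generalised von Neumann inequality, and then feeding the resulting local non-uniformity into the modified inverse theorem (Theorem~\ref{ModInverse:thm}). At the outset I would reduce to the case that $c_1, \dots, c_n$ are distinct and non-zero --- deleting a vanishing or repeated coefficient merely passes to a sub-configuration which $A$ still avoids, and its conclusion implies the present one after enlarging $C$ and shrinking $c$ --- and assume $\delta < 1$ (otherwise $A = [N]$ trivially contains the configuration once $N$ is large). Introduce the counting operator
\[
T(f_0, \dots, f_n) := \sum_{x \in \Z}\sum_{y \in \N} f_0(x) f_1(x + c_1 y^k) \dotsm f_n(x + c_n y^k).
\]
Since $A$ lacks \eqref{kth power configuration} and $y \in \N$ forces $y \neq 0$, the $n+1$ points are distinct, so $T(1_A) = 0$; meanwhile $T(\delta 1_{[N]}) = \delta^{n+1} T(1_{[N]})$ with $T(1_{[N]}) \gg_{\vc} N^{(k+1)/k}$ (restrict $y$ to an interval of length $\gg_{\vc} N^{1/k}$ keeping all $n+1$ points in $[N]$). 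Writing $1_A = \delta 1_{[N]} + f_A$, expanding $T(1_A) - T(\delta 1_{[N]})$ multilinearly and pigeonholing over the $2^{n+1}-1$ terms yields $1$-bounded $g_0, \dots, g_n$ supported on $[N]$ with $g_i = f_A$ for some index $i$, and $|T(g_0, \dots, g_n)| \gg_{n, \vc} \delta^{n} N^{(k+1)/k}$.

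Next I would apply Corollary~\ref{local von neumann} at this index $i$ with $H := C_0\delta^{-C_0}$, where $C_0$ is chosen (in terms of $n,k$ and the implied constants) large enough that the error term $H^{-1/2^r}$ is at most half the lower bound just obtained; the requirement $H \leq N^{1/k}$ is comfortably guaranteed by \eqref{exp density increment N bound}. Rearranging the surviving term produces a scale $M$ --- which, on tracing the lower bounds through Corollaries~\ref{linearisation induction} and~\ref{local von neumann} (at each of the boundedly many linearisation steps at most $O_{\vc}(\delta^{-O(1)})$ integers are lost from the interval $[N^{1/k}]$), satisfies $\delta^{C}N^{1/k} \ll_{\vc} M \ll_{\vc} \delta^{-C}N^{1/k}$ --- together with $\sum_x \norm{f_A}_{U^d(x+[M])} \gg_{n,\vc} \delta^{C_1}\sum_x \norm{1}_{U^d(x+[M])}$ for $d = d(n,k)$. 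Since $\norm{1}_{U^d(x+[M])} \asymp_d M^{(d+1)/2^d}$ and there are $\asymp N$ values of $x$ with $(x+[M]) \cap [N] \neq \emptyset$, a Markov-type argument extracts a set $S$ of $\gg \delta^{C_1} N$ integers $x$ with $\norm{f_A}_{U^d(x+[M])} \geq \delta^{C_2}\norm{1}_{U^d(x+[M])}$.

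To each $x \in S$ I would apply Theorem~\ref{ModInverse:thm} with uniformity parameter $\delta^{C_2}$ and degree $d$, partitioning $x + [M]$ into $k$th power progressions $P_{x,i}$, each of length $\geq c\delta^{C}M^{\exp(-1/c\delta^{C})}$, with $\sum_i \norm{f_A}_{U^1(P_{x,i})} \gg \delta^{C_3} M$; for $x \notin S$ take the trivial partition $\set{x + [M]}$ (common difference $1 = 1^k$). Summing over the finite set $X$ of $x$ for which $f_A$ is not identically zero on $x + [M]$, which has size $\asymp N$, gives $\sum_{x\in X}\sum_i \norm{f_A}_{U^1(P_{x,i})} \gg \delta^{C_3}\sum_{x\in X}\sum_i |P_{x,i}|$, whereas the mean-zero property of $f_A$ gives $\sum_{x\in X}\sum_i \sum_{y \in P_{x,i}} f_A(y) = \sum_{z\in[M]}\sum_x f_A(x+z) = 0$. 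Adding these and pigeonholing produces a single $k$th power progression $P = P_{x_0,i_0}$ with $\sum_{y\in P}f_A(y) \gg \delta^{C_3}|P|$. Replacing $P$ by $P \cap [N]$ --- still a $k$th power progression, and of length $\gg \delta^{C_3}|P|$ since $|f_A| \leq 1_{[N]}$ forces $|P\cap[N]| \geq |\sum_{y\in P}f_A(y)|$ --- and unwinding $f_A = 1_A - \delta 1_{[N]}$ gives $|A\cap P| \geq (\delta + c\delta^{C})|P|$, while the length bound follows from $M \gg_{\vc}\delta^{C}N^{1/k}$ together with $M^{\exp(-1/c\delta^{C})} \geq N^{\exp(-1/c'\delta^{C'})}$ after adjusting constants.

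I expect the main obstacle to be bookkeeping rather than any single inequality: one must verify that the lone hypothesis \eqref{exp density increment N bound} simultaneously subsumes every ``$N$ sufficiently large'' constraint appearing in Corollaries~\ref{linearisation induction}--\ref{local von neumann} (all merely polynomial in $\delta^{-1}$) and survives passage through the \emph{inner} exponential $M^{\exp(-1/c\delta^C)}$ of Theorem~\ref{ModInverse:thm} to deliver a progression genuinely longer than $N^{\exp(-1/c\delta^C)}$; it is precisely this last requirement that forces the doubly-exponential shape of \eqref{exp density increment N bound}.
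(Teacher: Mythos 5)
Your proposal is correct and follows essentially the same route as the paper's proof: multilinear expansion and pigeonholing to a large counting average, the local von Neumann theorem with $H \asymp \delta^{-O(1)}$, a Markov argument to extract many intervals $x+[M]$ with large local $U^d$-norm, Theorem~\ref{ModInverse:thm} on each together with the trivial partition elsewhere, and the mean-zero/absolute-value trick to pigeonhole down to a single $k$th power progression. If anything you are slightly more careful than the paper on boundary points (reducing to distinct non-zero $c_i$, the lower bound $M \gg \delta^C N^{1/k}$, and intersecting the final progression with $[N]$), all of which the paper leaves implicit.
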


\begin{proof}  
Throughout the following proof we write $c = c(\vc, k) >0$ and $C = C(n,k)$ for absolute constants dependent only on their respective parameters.  Different occurrences of $c$ and $C$ may denote different absolute constants.

Since $A$ lacks configurations of the form \eqref{kth power configuration}, we have
\begin{align*}
& \sum_{x \in \Z} \sum_{y\in \N} 1_A(x) 1_A(x+c_1y^k)\dotsm 1_A(x+c_ny^k) = 0.
\end{align*}
Recall the definition of the balanced function 
\begin{align*}
& f_A := 1_A - \delta 1_{[N]}.
\end{align*}
Making the substitution $1_A = \delta 1_{[N]} + f_A$ and expanding, we deduce that there exist $f_0, f_1, \dots, f_n$ satisfying
$$
\set{f_A} \subset \set{f_0,f_1, \dots, f_n} \subset \set{f_A, \delta 1_{[N]}}
$$ 
and 
\begin{multline}\label{eqn:7factor}
 (2^{n+1}-1)  \abs{\sum_{x \in \Z} \sum_{y \in \N} f_0(x) f_1(x+c_1y^k) \dotsm f_n(x+c_ny^k)} \geq \\
  \delta^{n+1} \sum_{x \in \Z} \sum_{y \in \N}1_{[N]}(x) 1_{[N]}(x+c_1y^k)\dotsm 1_{[N]}(x+c_ny^k).
\end{multline}

Notice that \eqref{exp density increment N bound} implies the much weaker estimate
\begin{equation}\label{density increment N bound}
\begin{split}
N \geq 1/(c\delta^C).
\end{split}
\end{equation}
Taking $c = c(\vc, k)$ in \eqref{density increment N bound} sufficiently small, our assumption ensures that
\begin{align*}
& \sum_{x \in \Z} \sum_{y \in \N}1_{[N]}(x) 1_{[N]}(x+c_1y^k)\dotsm 1_{[N]}(x+c_ny^k) \geq c N^{1+ \recip{k}}
\end{align*}
Incorporating this into \eqref{eqn:7factor}, we deduce that 
\begin{align}\label{eqn:mod7factor}
& \abs{\sum_{x \in \Z} \sum_{y \in \N} f_0(x) f_1(x+c_1y^k) \dotsm f_n(x+c_ny^k)} \geq c \delta^{n+1} N^{1+ \recip{k}}.
\end{align}

Applying the local von Neumann theorem (Corollary \ref{local von neumann}), there exist integers $r = r(n, k)$ and $d = d(n,k)$ such that for any $H\leq N^{1/k}$ there is an integer $M \ll_{\vc} H^r N^{1/k}$ satisfying
\begin{equation*}
H^{-1/2^r} +\Biggbrac{\frac{H^{r+1}M}{N^{1+\recip{k}}}\sum_x \frac{\norm{f_A}_{U^d(x+[M])}}{\norm{1}_{U^d(x+[M])}}}^{1/2^r} \geq c \delta^{n+1}.
\end{equation*}
Taking $ H = C(\vc, k) \delta^{-C(n,k)}$ sufficiently large allows us to conclude that
\begin{equation}\label{H factor}
\Biggbrac{\frac{H^{r+1}M}{N^{1+\recip{k}}}\sum_x \frac{\norm{f_A}_{U^d(x+[M])}}{\norm{1}_{U^d(x+[M])}}}^{1/2^r} \geq c \delta^{n+1}.
\end{equation}
In order for such a choice of $H$ to be permissible, we require that 
$$
N^{1/k} \geq C(\vc, k) \delta^{-C(n,k)},
$$ 
which certainly follows from \eqref{density increment N bound} on taking $C$ and $c$ therein sufficiently large and small (respectively).  This choice of $H$ incorporated into \eqref{H factor} then gives
\begin{equation*}
\frac{M}{N^{1/k}}\sum_{x} \frac{\norm{f_A}_{U^d(x+[M])}}{\norm{1}_{U^d(x+[M])}} \geq c\delta^{C} N.
\end{equation*}

We claim that $M$ lies in the range
\begin{equation}\label{M bound}
c\delta^C N^{1/k} \leq M \ll_{\vc} \delta^{-C} N^{1/k}.
\end{equation}
The upper bound follows from the conclusion $M \ll_{\vc} H^r N^{1/k}$ given by Corollary \ref{local von neumann}.  For the lower bound, we first note that the function $f_A 1_{x + [M]}$ is identically zero unless $x \in [N] -  [M]$.  Since $M \ll_{\vc} H^r N^{1/k}$ we may ensure that $M \leq N$ from \eqref{density increment N bound} and the fact that $k \geq 2$ (if $k = 1$ the density increment lemma is proved in \cite{gowers01}). Hence 
\begin{equation}\label{combined bound}
\frac{M}{N^{1/k}}\sum_{|x| < N} \frac{\norm{f_A}_{U^d(x+[M])}}{\norm{1}_{U^d(x+[M])}} \geq c\delta^{C} N.
\end{equation}
The trivial estimate $\norm{f_A}_{U^d(x + [M])} \leq \norm{1}_{U^d(x + [M])}$ then yields the lower bound in \eqref{M bound}.

It also follows from \eqref{M bound} and \eqref{combined bound} that there exists a set $X \subset (-N, N)$ of size $|X| \geq  c\delta^{C} N$ such that each $x \in X$ satisfies
\begin{align*}
 \norm{f_A}_{U^d(x+[M])} \geq c \delta^C \norm{1}_{U^d(x+[M])}.
\end{align*}
Applying Theorem \ref{ModInverse:thm}, we see that for each $x \in X$, there exists a partition of $x + [M]$ into $k$th power arithmetic progressions $P_{x, i}$ $(i \in I(x))$ of average length at least $c\delta^CM^{\exp(-1/c\delta^{C})}$ and such that
\begin{align*}
& \sum_{i \in I(x)} \norm{f_A}_{U^1(P_{x,i})} \geq c \delta^{C} \sum_i \norm{1}_{U^1(P_{x,i})} = c \delta^C M.
\end{align*}
Taking the trivial partition $P_{x,1} := x +[M]$ for $x \notin X$, we conclude that 
\begin{equation}\label{abs correlation}\begin{split}
\sum_{ |x| < N} \sum_{i \in I(x)}\Biggabs{\sum_{y \in P_{x,i}} f_A(y)} \geq c \delta^C NM.
\end{split}\end{equation}

Since $f_A = 1_A - \delta 1_{[N]}$ has mean zero, 
\begin{align*}
\sum_{ |x| < N} \sum_{i \in I(x)}\sum_{y \in P_{x,i}} f_A(y)  &  = \sum_{ x} \sum_{y \in x + [M]} f_A(y) \\
& = \sum_{z \in [M]} \sum_{ x} f_A(x+ z) \\
& = 0.
\end{align*}
Adding the above to \eqref{abs correlation} we find that
\begin{equation}\label{average behaviour}
\sum_{ |x| < N} \sum_{i \in I(x)} \max\Bigset{ \sum_{y \in P_{x,i}} f_A(y), 0 } \geq c \delta^{C} \sum_{ |x| < N}\ \sum_{i \in I(x)} |P_{x,i}|.
\end{equation}

Let $N_x$ denote the average size of the $P_{x,i}$ and let $J(x)$ denote the set of $i \in I(x)$ for which 
$$
|P_{x,i}|\geq \trecip{2}c \delta^{C} N_x.
$$
Since $f_A$ is 1-bounded we can replace $I(x)$ with $J(x)$ in the left-hand side of \eqref{average behaviour}, at the cost of halving the right-hand side.  The pigeon-hole principle then gives the existence of $x$ and $i \in J(x)$ such that 
$$
\max\Bigset{ \sum_{y \in P_{x,i}} f_A(y), 0 } \geq c \delta^{C} |P_{x,i}|.
$$

The lower bound in \eqref{M bound} combines with our lower bound on $N_x$  to give $|P_{x,i}| \geq c\delta^CN^{\exp(-1/c\delta^{C})}$.  This almost gives us our lemma, all under the weak assumption \eqref{density increment N bound}.  The stronger assumption \eqref{exp density increment N bound} ensures that for some $c' \gg c$ we have
$$
c\delta^CN^{\exp(-1/c\delta^{C})} \geq N^{\exp(-1/c'\delta^{C})}.
$$
\end{proof}

The proof of our main theorem quickly follows.

\begin{proof}[Proof of Theorem \ref{main:thm}]
Suppose that $A \subset [N]$ with $|A| = \delta N$ lacks a configuration of the form 
\begin{equation}\label{sqAP config}
x,\ x+c_1y^k,\ \dots, \ x+c_ny^k \quad \text{with}\quad y \in \Z\setminus\set{0}.
\end{equation}
Then by Lemma \ref{density increment lemma}, provided that
$$
N \geq \exp\exp(1/c\delta^C),
$$ 
there exists a $k$th power progression $P = a + q^k \cdot [N_1]$ of length at least $$N^{\exp(-1/c\delta^{C})}$$ such that
\begin{align*}
& |A\cap P| \geq (\delta +c\delta^{C}) |P|.
\end{align*}
Let $A_1 := \set{x \in \Z : a + q^k x \in A \cap P}$.  Then we have obtained a set $A_1 \subset [N_1]$ lacking configurations of the form \eqref{sqAP config} and of density $\delta_1 := |A_1|/N_1$ satisfying $\delta_1 \geq \delta + c \delta^C$.

Setting
$$
\delta_0 := \delta, \qquad N_0 := N, \qquad A_0 := A,
$$
let us iteratively apply Lemma \ref{density increment lemma}.  Provided that
\begin{equation}\label{iterative N bound}
N_i \geq  \exp\exp(1/c \delta_i^C) \qquad (0 \leq i < j),
\end{equation}
there exists a set $A_j \subset [N_j]$ lacking configurations of the form \eqref{sqAP config} and of density $\delta_j := |A_j|/N_j$ satisfying 
\begin{equation}\label{new delta bound}
\delta_{j} \geq \delta_{j-1} + c\delta_{j-1}^C.
\end{equation}
Moreover, we have the length lower bound
\begin{equation}\label{new N bound}
N_{j} \geq N_{j-1}^{\exp(-1/c\delta_{j-1}^{C})}.
\end{equation}

Using \eqref{new delta bound} gives $\delta_j \geq \delta + jc\delta^C$.  Hence if $j \geq 1/(c\delta^C)$  we obtain the contradiction  $\delta_j > 1$.  It follows that \eqref{iterative N bound} cannot hold for $j \geq 1/(c\delta^C)$, so there exists $i \leq 1/(c\delta^C)$ such that
\begin{equation}\label{N_i bound}
N_i <  \exp\exp(1/c \delta_i^C) \leq \exp\exp(1/c \delta^C).
\end{equation} 
By \eqref{new N bound}, we have
\begin{align*}
& N_i \geq  N^{\exp(-i/c\delta^{C})} \geq N^{\exp(-1/(c\delta^{C})^2)}.
\end{align*} 
Altering our values of $c$ and $C$ appropriately, we deduce that
\begin{align*}
& \exp\exp(1/c \delta^C) \geq N^{\exp(-1/c\delta^{C})}.
\end{align*}
Taking logarithms twice then gives
\begin{align*}
&2/(c \delta^C) \geq \log \log N.
\end{align*}
\end{proof}


\section*{Acknowledgments} 
The author would like to thank Ben Green for useful conversations on Gowers's local inverse theorem, and Adam Harper and Julia Wolf for their comments on an earlier draft.



\begin{dajauthors}
\begin{authorinfo}[sp]
  Sean Prendiville\\
  School of Mathematics\\ University of Manchester\\ Manchester
\\ UK\\
  sean.prendiville\imageat{}manchester\imagedot{}ac\imagedot{}uk \\
  \url{http://personalpages.manchester.ac.uk/staff/sean.prendiville/}
\end{authorinfo}
\end{dajauthors}

\end{document}